\documentclass[12pt,reqno]{amsart}

\makeatletter
\@namedef{subjclassname@2020}{2020 Mathematics Subject Classification}
\makeatother

\usepackage[paper=a4paper,margin=0.8in,bmargin=1in]{geometry}

\usepackage[
  open,
  openlevel=2,
  atend,
  numbered
]{bookmark}

\usepackage{amsmath}
\usepackage{amssymb}
\usepackage{amsthm}
\usepackage{color}
\usepackage{hyperref}
\definecolor{darkblue}{rgb}{0.0,0.0,0.85}
\definecolor{darkgreen}{rgb}{0,0.5,0}
\hypersetup{colorlinks,breaklinks,
            linkcolor=darkblue,urlcolor=darkblue,
            anchorcolor=darkblue,citecolor=darkblue}
\usepackage{mathrsfs}
\usepackage{enumitem}
\usepackage{graphicx}
\usepackage{comment}

\newcommand{\rvec}{{\mathrm r}}
\def\vsigma{{\vec\sigma}}
\def\ns{_n^\sigma}

\def\omvs{\om^\vsigma_{nkm}}
\def\sumn{ \displaystyle\sum_{n,\sigma}}
\def\sumnkmsc{\displaystyle\sum_{n,k,m, \vsigma;\textnormal{conv}}}
\def\sumcnkm{\displaystyle\sum_{n,k,m;\textnormal{conv}}}
\def\tauO{_{\tau=\Om t}}
\def\etau{e^{\tau\cL}}
\def\emtau{e^{-\tau\cL}}
\def\sL{{\mathsf L}}
\def\sigg{{n,\sigma_1,\sigma_2}}
\def\vr{{\boldsymbol r}}
\def\vrd{{\boldsymbol r}_{\delta}}

\numberwithin{equation}{section}
\newtheorem{theorem}{Theorem}[section]
\newtheorem{proposition}[theorem]{Proposition}
\newtheorem{lemma}[theorem]{Lemma}

\theoremstyle{definition}
\newtheorem{definition}[theorem]{Definition}
\newtheorem{remark}[theorem]{Remark}

\newcommand{\Om}{\Omega}
\def\om{\omega}

\newcommand{\vU}{{\boldsymbol{U}}}
\newcommand{\vV}{{\boldsymbol{V}}}
\def\vpsi{{\boldsymbol\psi}}
\newcommand{\vu}{\boldsymbol{u}}
\newcommand{\vv}{\boldsymbol{v}}
\newcommand{\vh}{{\boldsymbol h}}
\def\eig{{\boldsymbol e}}
\def\tvu{{\wt\vu}}
\def\tvU{{\wt\vU}}
\def\vVR{{\vV}^{R}}
\def\tVR{{V}^{R}}
\def\vw{{\boldsymbol{w}}}
\newcommand{\ck}{\check k}
\newcommand{\cm}{\check m}
\newcommand{\cn}{\check n}
\newcommand{\Jr}{{\mathsf  J}} 
\newcommand{\leray}{{\mathcal P}^{\textnormal{div}}}
\def\cP{{\mathcal P}}
\def\cPl{\cP_{\!<R}}

\def\cL{{\mathcal L}}
\newcommand{\bC}{{\mathbb C}}

\newcommand{\cN}{{\mathcal N}}
\def\cNp{{{\mathcal N}_0}}
\def\cNg{{{\mathcal N}_G}}

\newcommand{\bR}{{\mathbb R}}
\newcommand{\bZ}{{\mathbb Z}}

\newcommand{\bT}{{\mathbb T}}
\newcommand{\bTt}{{\bT^3}}
\def\vzero{{\vec 0}}
\def\bZt{\bZ^3}
\def\bZtno{\bZt\backslash\{\vzero\}} 
\newcommand{\rkj}[1]{r_k^{(#1)}}
\newcommand{\bs}{\boldsymbol}
\def\bp{{\mathbf p}}
\def\bq{{\mathbf q}}
\def\bpsi{{\bs \psi}}

\def\ep{\epsilon}
\def\mq{{\mathsf q}}
\def\mQ{{\mathsf Q}}
\def\sg{{\mathsf g}}
\def\sk{{\boldsymbol{\ell}}} 
\def\An{{\mathcal A}}
\def\cC{{\mathcal C}}
\def\scc{{\mathscr C}}
\def\scm{{\mathrm M}}

\def\rd{{\mathrm d}}
\def\ri{{\mathrm i}}
\def\IL{\Pi_\textnormal{IL}}
\def\SP{\Pi_\textnormal{SP}}
\def\EC{\Pi_\textnormal{EC}}
\def\bpi{{\bs\pi}}
\def\Jset{{\boldsymbol{\mathcal J}}}
\def\jc{{\gamma^{o}}}
\def\Gset{{\boldsymbol{\Gamma}}}

\newcommand{\be}{\begin{equation}}
\newcommand{\ee}{\end{equation}}
\def\pa{\partial}
\def\pD{{\mathsf D}}
\def\cD{{\mathcal D}}
\def\wh{\widehat}
\def\wt{\widetilde}
\def\ind{{\bs 1}}
\def\dcd{\cdot,\cdot}

\newcommand{\chibr}[1]{{{\mathfrak B}}{\left\{#1\right\}}}

\newcommand{\chibrB}[1]{{{\mathfrak B}}{\Big\{#1\Big\}}}
\newcommand{\ip}[1]{\left\langle #1 \right\rangle}
\newcommand{\ipb}[1]{\big\langle #1 \big\rangle}

\newcommand{\ipwk}[1]{\left\langle #1 \right\rangle_{\!wk}}
\newcommand{\qua}[1]{\quad \text{#1} \quad}
\newcommand{\qqua}[1]{\qquad \text{#1} \qquad}
 \newcommand{\sectionb}[1]{\section{\bf{#1}}}
 \newcommand{\pDn}[2]{\|#2\|_{#1}}
 \newcommand{\pDnb}[2]{\big\|#2\big\|_{#1}}

 \title[Near Resonance]{Near Resonance Approximation of Rotating Navier-Stokes Equations}
 
\author{Bin Cheng}
\address{Department of Mathematics, University of
          Surrey, Guildford, GU2 7XH, United Kingdom}
   \email{b.cheng@surrey.ac.uk}
   
\author{Zisis N. Sakellaris}
\address{Department of Mathematics, University of
          Surrey, Guildford, GU2 7XH, United Kingdom}
   \email{z.sakellaris@surrey.ac.uk}
\date{06 October 2021}
\keywords{near resonance, rotating Navier-Stokes equations, global well-posedness, restricted convolution, integer point counting, Diophantine inequalities, elliptic integrals}
\subjclass[2020]{Primary 35B25, 35B34, 35A01, 86A10, 42B37; secondary 35Q30}

\begin{document}
\maketitle

\begin{abstract}
We formalise the concept of near resonance for the rotating Navier-Stokes equations, based on which we propose a novel way to approximate the original PDE. The spatial domain is a three-dimensional flat torus of  arbitrary aspect ratios. We prove that the family of proposed PDEs are globally well-posed for any rotation rate and initial datum of any size in any $H^s$ space with $s\ge0$. Such approximations retain much more 3-mode interactions, thus more accurate, than the conventional exact resonance approach. Our approach is free from any limiting argument that requires physical parameters to tend to zero or infinity, and is free from any small divisor argument (so estimates depend smoothly on the torus' aspect ratios). The key estimate hinges on counting of integer  solutions of Diophantine inequalities rather than Diophantine equations. Using a range of novel ideas, we handle rigorously and optimally challenges arising from the non-trivial irrational functions in these inequalities. The main results and ingredients of the proofs can form part of the mathematical foundation of a non-asymptotic approach to nonlinear oscillatory dynamics in real-world applications. 
\end{abstract}

\section{\bf Introduction}

We investigate near-resonance-based approximations of the rotating Navier-Stokes (RNS) equations, a well-known model of geophysical fluid dynamics (GFD).  We prove the global well-posedness of solutions under very mild conditions for this novel class of PDEs that are characterised by what we call ``bandwidth'', a wavenumber dependent  parameter that allows much more 3-mode interactions to be retained by the near resonance approach than by the conventional {\it exact} resonance approach. We also prove explicit error bounds for such approximations. In a nutshell, we achieve in the proposed NR approximations two desirable properties: global solvability for arbitrary rotation rates and a large set of nonlinear interactions that dominate the dynamics for fast rotation rates.

The spatial domain, denoted by $\bTt$, is a three-dimensional flat torus with anisotropic periods $(2\pi \sL_1, 2\pi \sL_2, 2\pi)$ for any positive constants $\sL_1,\sL_2$. 
For a $\bC^3$-valued, Lebesgue integrable function $\vu$, the div-free condition $\nabla\!\cdot\vu=0$ is understood in the weak sense: that is $\int_{\bT^3}\vu\cdot\!\nabla h\,\rd x=0$ for any $\bC $-valued smooth function $h$. Let $\leray\vu$ denote the Leray projection of  $\bC^3$-valued  $\vu\in L^p(\bTt)$ for $p\in(1,\infty)$ onto the subspace of div-free functions. More precisely, 
\[
\leray\vu:=\vu-\nabla\Delta^{-1}\nabla\cdot\vu.
\]
Then, we can define  bilinear form for $\bC^3$-valued $\vu,\vv\in H^1(\bTt)$,
\[
B(\vu , \vv):=\leray (\vu \cdot \nabla \vv).
\] Let $\Jr$ denote the rotation matrix $\Jr =${\tiny$\begin{pmatrix}0,&-1,&0\\1,&0,&0\\0,&0,&0\end{pmatrix}$} 
and define linear operator 
\[
\cL:= \leray \Jr \,\leray .
\]
The incompressible RNS equation is then formulated for $\bR^3$-valued unknown $\vU(t,x)$ as
\begin{equation} \label{NS}
\partial_t \vU   +B(\vU , \vU) = \Om\cL\vU + \mu \Delta \vU,\qquad \nabla\!\cdot\vU=0.
\end{equation}
 Scalar constant $\mu>0$ denotes viscosity. 
The $\Om\cL\vU$ term represents the Coriolis force due to the frame's rotation where scalar constant $\Omega$ is proportional to the rotation rate (WLOG, let $\Omega>0$) and takes the form of  $1/(Rossby\;number)$ upon nondimensionalisation. 
 We omitted an external forcing term, as its treatment under various regularity assumptions is fairly well-documented.  Initial datum $\vU(0,\cdot)$ is in $L^2(\bTt)$ (or a more regular subspace of it) and div-free. 

The linear term $\Om\cL\vU$ with usually large $\Om$ is responsible for inertial waves\footnote{\,\label{fn:1}Evident from dispersion relation \eqref{om:eig},  
 the Coriolis term does not generate (linear) oscillatory dynamics in horizontal modes. Such a large set of non-oscillatory modes is a common feature that separates many fluid models (even in the inviscid case) from classically known dispersive PDEs, hence requiring different treatments.}.  Early results of inertial waves  by Poincar\'e \cite{Poincare} were discussed and extended in \cite[\S 2.7]{Greenspan} in modern notations. This term generates operator exponential $e^{\Om\cL t}$ which then leads to a transformation of the RNS equations by way of Duhamel's principle, which explicates 3-mode interactions in the eigen-expansion of the ``transformed'' bilinear form -- see \eqref{B:full1}. We then define $\wt B(\dcd)$ to approximate $B(\vu , \vv)=\leray (\vu \cdot \nabla \vv)$ via restricted convolution in eigen/Fourier-expansions. Such restriction retains and discards 3-mode interactions  based on the smallness of the linear combination of 3 frequencies -- which will be called ``triplet value'' --  rather than the zero-ness of the triplet value as used for defining exact resonances. In detail, the triplet value is 
 \be\label{om:star}
\omvs: =\sigma_1 \frac{ \cn_3  }{ |{ \cn}|}+\sigma_2  \frac{\ck_3  }{ |{ \ck}|} +\sigma_3 \frac{ \cm_3  }{ |{\cm}|}\,,
\ee  
as a function of $(n,k,m)\in(\bZt)^3$ for the 3 wavevectors and $\vec\sigma=(\sigma_1,\sigma_2,\sigma_3)\in\{+,-\}^3$ for the 3 signs of temporal phases. The accent $\check{\;}$ indicates a ``domain-adjusted'' wavevector defined as
\[
\begin{aligned}
\cn:=\Big(\,\frac{n_1}{\sL_1},\frac{n_2}{\sL_2},n_3\Big)^T\qua{ for } n=(n_1,n_2,n_3)^T\in\bZt  \,.
\end{aligned}
\]
 The Euclidean length and dot product are applied as usual, e.g. $\cn\cdot\ck=\frac{n_1k_1}{\sL_1^2}+\frac{n_2k_2}{\sL_2^2}+n_3k_3$.
 
 Now, we are ready to define the {\bf near resonance} (NR) set as
\be\label{def:cN}
\cN: = \Big\{ ({n, k,m})\in \big(\bZtno\big)^3 \,:\, \min_{\vsigma\in  \{+,-\}^3} \left|\omvs\right| \le\delta(n,k,m),\;\;n+k+m=\vzero\,\Big\}
\ee
where the {\bf bandwidth} $\delta$ is related to but fundamentally different from small divisor -- see \S \ref{ss:lit}.

 The convolution condition $n+k+m=\vzero$ differs slightly from convention in order to make the indicator\footnote{\,For set $S$, the associated indicator function is defined as 
\(
\ind_S(x)=1\) if $x\in S$
and $\ind_S(x)=0$ if $x\notin S$.
} 
function $\ind_{\cN}(n,k,m)$ symmetric with respect to permutation of all three arguments.

In Fourier modes (\eqref{Fourier:s}--\eqref{Fourier:c}), the bilinear form of RNS equations satisfies the convolution $B(\vU,\vV)=\sum_{k,m} B\big(e^{\ri \ck\cdot x}U_k\,, \, e^{\ri \cm \cdot x} V_m\big)$. Then, postponing the detailed motivation to \S \ref{ss:motivate}, we introduce the {\bf restricted convolution} based on the NR set $\cN$,
\be\label{wt:B:full2}
\wt B(\vU,\vV)=\sum_{k,m\in(\bZt)^2} B\big(e^{\ri \ck\cdot x}U_k\,, \, e^{\ri \cm \cdot x} V_m\big)\,\ind_{\cN}({-k-m,k,m}),
\ee
and arrive at the main equation
\begin{equation} \label{NR}
\partial_t \tvU   +\wt B(\tvU , \tvU) = \Om\cL\tvU + \mu \Delta \tvU,\qquad \nabla\!\cdot\tvU=0,
\end{equation}
with div-free initial datum $\vU(0,\cdot)\in L^2(\bTt)$.
We name this the {\bf near-resonance approximation} of the RNS equations. The resemblence of \eqref{NS} and \eqref{NR} can be also seen in their equivalent, ``transformed'' versions \eqref{m:NS} and \eqref{m:NR} respectively with the same transformation rules as in \eqref{def:m:B} and \eqref{m:B:nr} respectively. Also compare the convolutions in the pre-transformed bilinear forms given above and in the transformed counterparts \eqref{B:full1} and \eqref{wt:B:full1} respectively.

With weak solutions given in Definition \ref{def:wk} and homogeneous $H^s(\bTt)$ norm for real $s$ (denoted by $\pDn{s}{\cdot}$) defined in \S \ref{s:results}, we state our main results. For global well-posedness, the bandwidth should decay like $(wavenumber)^{-1}$ with a minor logarithmic attenuation. 
\begin{theorem}[{\bf Global well-posedness of NR approximation}]\label{thm:NR}
 In the near resonance set $\cN$, let bandwidth $\delta=\delta(n,k,m)\in[0,\frac12)$ satisfy 
 \be\label{de:max}
\delta\text{\, only depends on }\max\{|\cn|,|\ck|,|\cm|\},
\ee
and, for some constant $\hat c_1>0$, 
\be\label{de:up}
\delta \log\tfrac1\delta \le \hat c_1\min\big\{ |\cn|^{-1},  |\ck|^{-1}, |\cm|^{-1}\big\}\,,
\ee
 where $\delta(n,k,m)=0$ is allowed under the convention $0|\log 0|=\displaystyle\lim_{\delta\to 0}\delta\,\log\tfrac1\delta=0$. For any real $s\ge 0$ and any
 $\bR^3$-valued, div-free, zero-mean\footnote{\,The zero-mean assumption is imposed WLOG and this property propagates in time -- see the start of \S \ref{s:results}.} initial datum $\tvU(0,\cdot)=\tvU_0(\cdot)\in H^s(\bTt)$, the NR approximation \eqref{NR} admits a {\bf unique global} weak solution
\[
\wt\vU\in \scc^0([0,\infty); H^{s}(\bTt))\cap L^2([0,\infty); H^{s+1}(\bTt))
\] that satisfies the energy \emph{equality} (rather than inequality)
\be\label{NR:en:L2}
\|\tvU(T,\cdot)\|_{L^2}^2+2\mu\int_0^{T}\|\nabla\tvU\|_{L^2}^2\,\rd t =  \|\tvU_0\|_{L^2}^2\,, \quad\;\forall \,T\ge 0\,,
\ee
and    the global bounds
\be\label{global:Hs}
\max_{t\ge0}\pDn s{\tvU(t,\cdot)}^2\le e^{\mu^{-2}{\hat C}E_{00}}E_{s0}\qua{\textnormal{and}}\mu\int_0^\infty\pDn {s+1}{\tvU}^2\,\rd t \le E_{s0}+\mu^{-2}{\hat C}E_{00}\,e^{\mu^{-2}{\hat C}E_{00}}E_{s0},
\ee
for 
$E_{s0}:=\pDn s{\tvU_0}^2$, $E_{00}:=\pDn 0{\tvU_0}^2$ and constant $\hat C=\hat C(s,\hat c_1,\bTt)$.

The solution Lipschitz-continuously depends on the initial datum in the sense of Lemma \ref{lem:stable}.

\end{theorem}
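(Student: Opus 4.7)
The plan is to obtain existence via a Galerkin approximation with uniform $L^2$ and $H^s$ estimates, pass to the limit, and invoke Lemma \ref{lem:stable} for uniqueness and Lipschitz dependence. The decisive novel input is a trilinear estimate on the restricted bilinear form $\wt B$ that is available precisely under the bandwidth condition \eqref{de:up} and closes the $H^s$ energy inequality globally for every $s\ge 0$, without any smallness on the data.

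I would first introduce Galerkin truncations $\tvU_N$ supported on Fourier modes with $|\cn|\le N$, obtaining a finite-dimensional ODE that is locally well-posed. Because $\ind_{\cN}$ is fully symmetric in its three arguments (as stressed just after \eqref{def:cN}) and the convolution constraint $n+k+m=\vzero$ is preserved, the projected restricted form $\wt B_N$ inherits from $B$ the cancellation $\langle \wt B_N(\vu,\vu),\vu\rangle_{L^2}=0$ on div-free fields. Combined with the antisymmetry of $\cL$ and the $\mu\Delta$ dissipation, this immediately yields the $L^2$ energy equality \eqref{NR:en:L2} at the Galerkin level, hence global existence of the ODE, the uniform bound $\pDn{0}{\tvU_N(t,\cdot)}^2\le E_{00}$, and the integrated estimate $2\mu\int_0^\infty\pDn{1}{\tvU_N}^2\,\rd t\le E_{00}$.

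The heart of the proof is a trilinear estimate of the form
\[
|\langle \wt B(\vu,\vv),\vw\rangle_{H^s}| \le C\,\pDn{1}{\vu}\,\pDn{s+1}{\vv}\,\pDn{s}{\vw},
\]
with $C=C(s,\hat c_1,\bTt)$. To prove it I would expand all three vector fields in the Fourier basis, restrict the triple sum to $\cN$ through $\ind_{\cN}$, and invoke the central integer-point counting estimate for triples satisfying $n+k+m=\vzero$ and $\min_{\vsigma}|\omvs|\le\delta$. After dyadic decomposition in wavenumber sizes, the counting should produce a bound of order $\delta\log(1/\delta)$ times a polynomial in the wavenumbers; the factor $\min\{|\cn|,|\ck|,|\cm|\}^{-1}$ in \eqref{de:up} is exactly the gain needed so that Cauchy--Schwarz delivers the displayed inequality. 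Inserting this into the $H^s$ energy identity for $\tvU_N$, absorbing a factor $\pDn{s+1}{\tvU_N}$ into $2\mu\pDn{s+1}{\tvU_N}^2$ via Young's inequality, and applying Gronwall with the integrable-in-time quantity $\pDn{1}{\tvU_N}^2$ in the exponent produces \eqref{global:Hs} uniformly in $N$, since $\int_0^\infty\pDn{1}{\tvU_N}^2\,\rd t\le E_{00}/(2\mu)$.

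With uniform bounds in $\scc^0_t H^s_x\cap L^2_t H^{s+1}_x$ in hand, Aubin--Lions compactness extracts a subsequence converging to a limit $\tvU$, and continuity of the restricted trilinear form relative to these norms permits passage to the limit in the nonlinearity, producing a weak solution satisfying \eqref{NR:en:L2} and \eqref{global:Hs}. Uniqueness and Lipschitz dependence then follow from Lemma \ref{lem:stable}. The main obstacle is the counting step: one must obtain sharp quantitative bounds on integer triples satisfying the Diophantine inequality $|\omvs|\le\delta$ while coping with both the irrational coefficients introduced by the aspect ratios $\sL_1,\sL_2$ and the delicate geometry of the level sets of the phase $\omvs$ near the horizontal-mode singularity at $\cn_3=0$, all without recourse to any small-divisor hypothesis.
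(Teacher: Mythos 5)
Your overall scheme -- spectral truncation, the $L^2$ energy equality for global solvability of the truncated system, uniform $H^s$ bounds via a ``2D-like'' estimate and Gr\"onwall with the time-integrable quantity $\pDn1{\tvU_N}^2$, passage to the limit, and uniqueness/Lipschitz dependence from Lemma \ref{lem:stable} -- is essentially the paper's. The gap is precisely at the step you call the heart of the proof: the trilinear bound $|\langle \wt B(\vu,\vv),\vw\rangle_{H^s}|\le C\,\pDn1\vu\,\pDn{s+1}\vv\,\pDn{s}\vw$ is \emph{false} for $s>1$, so no counting estimate combined with Cauchy--Schwarz can deliver it. Indeed, for single-mode inputs the inequality reduces to a pointwise comparison of one summand with the product of norms, and the counting gain is irrelevant. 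Take $\sL_1=\sL_2=1$, $k=(K,0,1)$, $m=(0,1,0)$, $n=-k-m$, with $u_k=(0,1,0)$, $v_m=(0,0,1)$, $w_n$ the (unit) projection of $(0,0,1)$ onto $\cn^\perp$, and phases chosen so the pairing does not cancel. The triple lies in $\cN$ because $\min_\vsigma|\omvs|=O(K^{-3})$, far below an admissible bandwidth $\delta\sim \hat c_1 (K\log K)^{-1}$, the couplings $u_k\cdot\cm$ and $v_m\cdot w_n$ are $O(1)$, and then the left side scales like $K^{2s}$ while your right side scales like $K\cdot 1\cdot K^{s}$, so the ratio blows up like $K^{s-1}$. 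What is actually available is Theorem \ref{thm:2Dlike}: the general bound \eqref{tri:est}, which unavoidably carries $\pDn{s+1}\vw$ on the test slot in some terms (and hence, used with $\vu=\vv=\vw$, would demand smallness of the data relative to $\mu$), and the commutator-type bound \eqref{comm:est}, valid only when the second and third arguments coincide. Its proof requires a cancellation your plan omits: using the full permutation symmetry of $\ind_\cN$ and incompressibility, symmetrize the sum in $n\leftrightarrow m$ so that $|\cn|^{2s}$ is replaced by $|\cn|^{2s}-|\cm|^{2s}$, then use the mean value theorem \eqref{cn:cm:s} to convert this difference into a factor $|\ck|$, i.e.\ transfer the derivative onto the first argument; only after this does the restricted-convolution lemma (whose counting input is indeed the hard analytic core, as you acknowledge) yield $\pDn1{\tvU}\pDn s{\tvU}\pDn{s+1}{\tvU}$ (plus harmless extra terms when $s>1$) and the Gr\"onwall closure for every $s\ge0$.

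A secondary point: a plain Galerkin/Aubin--Lions weak limit gives, as usual, an energy \emph{inequality} and only weak continuity in time, whereas the theorem asserts the equality \eqref{NR:en:L2} and membership in $\scc^0([0,\infty);H^s(\bTt))$. You must either upgrade a posteriori, using the $L^2_tH^{s+1}$ regularity together with the 2D-like estimates to justify testing with the solution itself (as for 2D Navier--Stokes), or proceed as the paper does: solve first with truncated data to get smooth solutions, and then use the stability Lemma \ref{lem:stable} to show the approximating family is Cauchy in $\scc^0([0,\infty);H^s(\bTt))\cap L^2([0,\infty);H^{s+1}(\bTt))$, so that the energy equality and strong continuity pass directly to the limit. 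With the commutator estimate \eqref{comm:est} in place of your claimed trilinear bound, and this limiting argument made precise, your plan closes along the same lines as the paper.
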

\begin{remark}\label{p:domain}
In this article, when we say a  constant depends on the torus domain $\bTt$, it means the constant depends {\it smoothly} on its aspect ratios $\sL_1,\sL_2$. Though expected from physics, this is not achievable if small divisor argument is involved.
\end{remark}
The proof is in \S \ref{ss:mainproof}. It crucially depends on the following result that our proposed bilinear form $\wt B$  satisfies the same estimates as the classic $\leray(\vu\cdot\nabla\vv)$ in 2D, even though the velocity field and spatial domain of $\wt B$ are  3D, and there is no apparent decoupling of any kind. Then \eqref{NR} with any rotation rate $\Omega$ enjoys most of the nice properties that 2D Navier-Stokes equations have (\cite{Leray}), which lays the foundation for the proof of Theorem \ref{thm:NR}. Further, such 2D-like feature can imply desirable numerical properties (\cite{Temam}), which will be covered in future work. 

\begin{theorem}[{\bf 2D-like estimates}]\label{thm:2Dlike}
For $\wt B$ in \eqref{wt:B:full2} with the same assumptions on $\cN,\delta$ as Theorem \ref{thm:NR}, and $\bR^3$-valued, div-free, zero-mean  $\vu,\vv,\vw\in H^{s+1}(\bTt)$ (real $s\ge0$), we have 
\be\label{tri:est}\begin{aligned}
\Big|\big\langle \pD^{s}\wt B( \vu,\vv),\pD^s\vw\big\rangle\Big|\lesssim 
2^{s}&\big(\pDn0\vu\,\pDn{s+1}\vv+\pDn{s}\vu\,\pDn{1}\vv\big)\pDn{s+1}\vw\\
+2^s&\big(\pDn{1}\vu\,\pDn{s+1}\vv+\pDn{s+1}\vu\,\pDn{1}\vv\big)\pDn{s}\vw,
\end{aligned}
\ee
and a further estimate with identical second and third arguments
\begin{align}
\nonumber&\Big|\big\langle \pD^{s}\wt B( \vu,\vw),\pD^s\vw\big\rangle\Big|\lesssim\\
\label{comm:est}&\begin{cases}
s\,\pDn1\vu\,\pDn{s}\vw\,\pDn{s+1}\vw,&\textnormal{for real }s\in(0, 1],\\
s2^{s}\big(\pDn1\vu\pDn{s}\vw\pDn{s+1}\vw+
\pDn{s}\vu\pDn1\vw\pDn{s+1}\vw+\pDn{s+1}\vu\pDn{1}\vw\pDn{s}\vw\big),&\textnormal{for real }s>1 .
\end{cases}
\end{align}
The missing constants in the $\lesssim$ notation depend on ${\hat c_1}, \bTt$
\end{theorem}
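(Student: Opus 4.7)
My plan is to Plancherel both estimates into a triple Fourier sum restricted to the near-resonance set $\cN$, and then exploit the bandwidth bound \eqref{de:up} via an integer-point counting lemma that effectively converts the 3D convolution into a 2D one. Writing $\vu=\sum_k u_k\,e^{\ri\ck\cdot x}$ and similarly for $\vv,\vw$, Parseval transforms the left-hand side of \eqref{tri:est} into
\[
\sum_{\substack{n+k+m=\vzero\\ (n,k,m)\in\cN}} |\cn|^{2s}\,T(n,k,m),
\]
where $T$ is trilinear in $(u_k,v_m,w_n)$ with $|T|\lesssim |\cm|\,|u_k|\,|v_m|\,|w_n|$, the factor $|\cm|$ coming from $\nabla$ in $B$ and the Leray projector $\leray$ being bounded on each mode. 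The elementary inequality $|\cn|^{s}\le 2^{s-1}(|\ck|^{s}+|\cm|^{s})$, valid since $\cn=-\ck-\cm$, is what produces the prefactor $2^s$ and allows the top derivatives to be redistributed onto any two of the three arguments.

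The technical heart is a counting lemma for $\cN$. For fixed $n$, I want to bound the number of $k\in\bZtno$ with $m=-n-k$ and $(n,k,m)\in\cN$ in a shell $|\ck|\sim R$; the target is to save one full dimension, i.e. to show that the count is $\lesssim \delta\log(1/\delta)\cdot R^{3}$ rather than the trivial $R^{3}$. Fixing the horizontal part $(k_1,k_2)$ and varying $k_3\in\bZ$, the constraint $\min_{\vsigma}|\omvs|\le\delta$ traps $k_3$ in a union of short intervals whose total length is controlled by the transversality of the irrational algebraic function $k_3\mapsto\omvs$. Critical points of this function require a finer analysis via elliptic-integral-type bounds, and the logarithmic attenuation in \eqref{de:up} is tuned precisely to absorb the resulting integrable singularity. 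The whole count is uniform---indeed smooth---in the aspect ratios $\sL_1,\sL_2$.

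Given the counting lemma, Cauchy--Schwarz on the restricted triple sum yields \eqref{tri:est}: the bound effectively replaces one $\ell^2$-sum over a 3D shell by one over a 2D slice, reproducing the same balance of derivatives as in the 2D Navier--Stokes trilinear estimate. For \eqref{comm:est}, I split
\[
\pD^s\wt B(\vu,\vw)=\wt B(\vu,\pD^s\vw)+\big[\pD^s,\wt B(\vu,\cdot)\big]\vw,
\]
and pair with $\pD^s\vw$. The first contribution vanishes because the permutation-symmetry of $\ind_{\cN}$ combined with the div-free structure of $\leray$ preserves the classical cancellation $\langle\vu\cdot\nabla\vw,\vw\rangle=0$. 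The commutator is handled via the mean-value inequality $\big||\cn|^s-|\ck|^s\big|\lesssim s\max(|\cn|,|\ck|)^{s-1}|\cm|$, which accounts for the prefactor $s$ (degenerating at $s=0$) and reduces matters to another instance of \eqref{tri:est}; the dichotomy $s\in(0,1]$ versus $s>1$ in \eqref{comm:est} reflects whether a single mean-value step suffices or a further paraproduct-type split is needed.

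The principal obstacle is the counting lemma in the second paragraph. Because $\omvs$ is a sum of irrational algebraic functions of integer wavevectors, $|\omvs|\le\delta$ is a genuine Diophantine \emph{inequality}, not an equation, and the count must be bounded smoothly in $\sL_1,\sL_2$, which rules out any small-divisor shortcut. This forces a careful geometric analysis of the level sets of $\omvs$ near its critical points, where elliptic integrals arise naturally; matching their logarithmic singularities against the $\delta\log(1/\delta)$ tolerance in \eqref{de:up} is the delicate part of the argument.
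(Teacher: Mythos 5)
Your overall architecture matches the paper's: expand the pairing via \eqref{tri:e}, redistribute derivatives with $|\cn|^s\le 2^{(s-1)^+}\big(|\ck|^s+|\cm|^s\big)$, feed a per-$n$ counting bound for the restricted convolution into a Littlewood--Paley/Cauchy--Schwarz argument (this is Lemma \ref{restricted:L}), and for \eqref{comm:est} use the cancellation $\big\langle\wt B(\vu,\pD^s\vw),\pD^s\vw\big\rangle=0$ (Proposition \ref{prop:Bt}) together with a mean-value bound of the type \eqref{cn:cm:s}; these parts are sound, up to an index slip in your mean-value inequality (what must be controlled is the difference of $s$-th powers of the output frequency and the frequency carried by the second argument, bounded by the advecting frequency, as in \eqref{cn:cm:s}).

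The genuine gap is the counting lemma, which you assert rather than prove, and whose sketch would not go through as stated. First, the target bound ``$\lesssim\delta\log(1/\delta)\,R^3$'' is false without an additive $O(R^2)$ term: even at $\delta=0$ the exact resonances contribute $\gtrsim|\cn|^2$ lattice points (see Appendix \ref{S:op}), so the correct statement is $\lesssim |\cn|^2+|\cn|^3\delta\log\tfrac1\delta$, which under \eqref{de:up} yields the $O(|\cn|^2)$ count actually needed. Second, your plan---fix $(k_1,k_2)$, trap $k_3$ in short intervals by transversality, then sum over the horizontal lattice---omits two essential mechanisms: (i) the lattice sum over $(k_1,k_2)\in\bZ^2$ of the one-dimensional slice measures is not the three-dimensional volume, and converting horizontal lattice counts into measures is exactly where the paper needs an area-plus-perimeter bound for sets bounded by multiple disjoint Jordan curves (Lemma \ref{lem:m:Jordan}; the example \eqref{2D:e} shows why measure alone cannot control the count); (ii) ``transversality of the irrational algebraic function'' is precisely what is not available in any straightforward way---lower bounds on derivatives of $\omvs$ near its zero set degenerate at stationary points, and the sharp $\delta\log\tfrac1\delta$ bound is obtained in the paper not by one-dimensional transversality but by swapping the azimuthal angle for $c_m$ in spherical coordinates, factoring the resulting quartic so that four $1/\sqrt{0}$ singularities appear, reducing their $4!$ orderings via the Klein four-group, and invoking the conditional sharper elliptic-integral estimate \eqref{cI:sharp} in the all-fast regime (without which an extra logarithm appears and the admissible bandwidth would be strictly smaller than \eqref{de:up}). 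Without these ingredients, or substitutes for them, the counting input---and hence the theorem---is not established.
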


The proof is at the start of \S \ref{s:mainproofs} and relies on results proven in the bulk of this article including \S \ref{s:con:L}  (linking restricted convolution on torus domain to integer point counting, well-known in harmonic analysis), \S \ref{S:R:sum} (linking point counting to volume of sublevel set), \S \ref{S:vol} (volume estimate) and Appendix \S \ref{s:j:c} (a general result on point counting concerning disjoint Jordan curves). The latter three parts contain the key technical novelties that we employ to resolve a range of challenges with rigour, optimality and a good level of generalisability.  For challenges, we invite the reader to recall definitions \eqref{om:star}--\eqref{def:cN} and consider the number of integer points $k=(k_1,k_2,k_3)$ satisfying
\be\label{om:k:ineq}
f(k_1,k_2,k_3):=C_n+\sigma_2  \frac{\ck_3  }{ |{ \ck}|} -\sigma_3 \frac{ \cn_3+\ck_3  }{ |{\cn+\ck}|}\in[-\delta,\delta],\quad\text{with $k$ localised as }\,\tfrac12|\cn| \le |\ck| \le |\cn|,
\ee
for fixed $0<\delta\ll1$ and $n,\vsigma$ as given parameters and with constant $C_n:=\sigma_1 \frac{ \cn_3  }{ |{ \cn}|}$. 
The intuitive $O(|\cn|^3\delta+|\cn|^2)$ upper bound for the point count is a false claim due to the $O(|\cn|^3\delta\log{1\over\delta}+|\cn|^2)$ {\it lower} bound for certain choices of $n$ -- see Appendix \ref{S:op}. Let us consider the simplest 1D case  $f_1:\bR\mapsto\bR$ and the upper bound of number of integers $k_1$ subject to $|f_1(k_1)| \le \delta$. Then one should at least prove a lower bound of $|f_1'(x)|$ for $x$ near a zero of $f_1$ assuming it is not too close to a stationary point (such lower bounds can appear in literature concerning the van der Corput lemma). However in our 3D case, $f(k_1,k_2,k_3)$ combines square roots in such a fashion (e.g. without obvious convexity) that it is highly non-trivial to prove {\it sharp} lower bounds for its derivatives in the neighbourhood of its zero level set while also addressing degeneracy near stationary points\footnote{\,Let us only consider the volume of sublevel sets for now -- issues from the boundary can be seen in examples \eqref{2D:e}, \eqref{3D:e}. Let $\tilde k_j=\frac{k_j}{|\cn|}\,(j=1,2,3)$  so that $\tilde k_1^2+\tilde k_2^2+ \tilde k_3^2\in[\frac14,1]$ by the last part of \eqref{om:k:ineq}. If the sublevel set locally ``resembles'' 
\(
\big\{(k_1,k_2,k_3):\tilde k_1\tilde k_2\tilde k_3(\tilde k_3-a)\in[-\delta,\delta]\big\}
\) for constant $a$, then the volume of sublevel set localised as in \eqref{om:k:ineq} for the $a\sim O(1)$ is $O(|\cn|^3\delta\log\frac1\delta)$ case but the volume for the $a=0$ case is $O(|\cn|^3\sqrt{\delta})$.}  where in fact higher derivatives of $f$ can possibly get too close to zero as well. Further rationalisation of \eqref{om:k:ineq} would result in an 8th degree polynomial in $(k_1,k_2,k_3)$ with $(\delta,n,\vsigma)$-dependent coefficients, and it is still highly non-trivial to {\it optimally} quantify those lower bounds of derivatives. There is also a lesser issue in topology: the level set of a function at a critical value is not necessarily manifold(s), for example the boundary of the 2D set defined by $0<x^2-y^2<\delta$. But thanks to the {\it inequality} nature of sublevel sets, it can be approximated by a sequence of regular sets $\ep< x^2-y^2<\delta$ with $\ep\to 0$. Also see Figure \ref{fig:top}.

Estimates \eqref{tri:est}, \eqref{comm:est} are ``2D-like'' in the following sense. The simple inequalities \eqref{tri:inf:fi} for {\it generic} convolutions indicate the ``cost'' of  derivatives is usually $\frac32$, namely  $\frac12\times dimension$, in view of the orders of derivatives on the left-hand and right-hand sides. In the above result, however, the cost of derivatives is reduced to 1 (note $\wt B$ contains one derivative).

\begin{theorem}[{\bf $O(\Om^{-1})$ error estimates, independent of $\mu$}] \label{thm:local:e}
Let bandwidth $\delta=\delta(n,k,m)\in(0,1)$ in the near resonance set $\cN$ satisfy the \emph{lower} bound
\be\label{de:lower}
\delta \log\tfrac1\delta \ge \hat c_2\min\big\{|\cn|^{-1},  |\ck|^{-1}, |\cm|^{-1}\big\},
\ee
for some constant $\hat c_2>0$.
 Consider the RNS equations \eqref{NS} and its NR approximation \eqref{NR} with respective initial data $\vU_0$ and $\tvU_0$ that are $\bR^3$-valued, div-free, zero-mean, and satisfy 
\[
\pDn {s}{\tvU_0}^2\;\le E_0\qua{\, and \,}\pDn {s^\flat}{\vU_0}^2\;\le E_0\,,\qua{ for real}s>3,\;s^\flat>\tfrac52.
\]
Suppose real number $s'=0$ or ${s'\in[1,s-3)}$ (if $s>4$) with $s'\le s^\flat$.
Then there exist constants $C(s,s^\flat,s',\bTt,E_0)$ and $c(s,s^\flat,s',\bTt)$ independent of $\mu,\Omega,\ep_0$ so that
\[
\pDn {s'}{\tvU(t,\cdot)-\vU(t,\cdot)}^2\le \pDn {s'}{\tvU_0-\vU_0}^2+C\hat c_2^{-2}\Om^{-2}\qua{ \, for }t\in[0,c/E_0^{\frac12}].
\]
\end{theorem}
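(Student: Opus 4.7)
The plan is a normal-form comparison on a short time interval $[0,T_0]$ with $T_0=c/E_0^{1/2}$. First I would pass to the transformed variables $\vu:=e^{-\Om\cL t}\vU$ and $\tvu:=e^{-\Om\cL t}\tvU$; since $e^{-\Om\cL t}$ is an isometry on every $H^s$, the trilinear estimates of Theorem~\ref{thm:2Dlike} apply verbatim to the transformed bilinear forms (see \eqref{m:NS}, \eqref{m:NR}, \eqref{def:m:B}, \eqref{m:B:nr}), and the Coriolis term is absorbed into explicit oscillatory phase factors $e^{\ri\tau\Om\omvs}$ appearing in each Fourier/eigen triplet. The difference between the transformed NR and RNS bilinear forms is supported only on non-NR triplets with $|\omvs|>\delta(n,k,m)>0$, so those phases are genuinely non-stationary and amenable to integration by parts in $\tau$.

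Next I would establish $\mu,\Om$-independent a priori bounds on $[0,T_0]$: $\|\vU(t)\|_{s^\flat}\lesssim E_0^{1/2}$ by the standard 3D commutator/Moser argument ($s^\flat>\tfrac52$), and $\|\tvU(t)\|_{s}\lesssim E_0^{1/2}$ by combining \eqref{comm:est} (using $s>3$) with the $L^2$ energy equality \eqref{NR:en:L2} and Gagliardo--Nirenberg interpolation to absorb the single $H^{s+1}$ factor produced by the 2D-like estimate. Setting $\vw:=\tvu-\vu$, the error equation reads
\[
\pa_\tau\vw-\mu\Delta\vw+\wt B(\vw,\tvu)+\wt B(\vu,\vw)=-(\wt B-B)(\vu,\vu)=:\mathcal R(\tau),
\]
where $B,\wt B$ now denote the transformed bilinear forms. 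Testing against $\pD^{2s'}\vw$ and applying \eqref{tri:est}, \eqref{comm:est} together with the a priori bounds yields absorbable contributions of the form $\mathcal A(\tau)\|\vw\|_{s'}^2$ plus cross terms controllable by Young's inequality.

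The main step is to show $\int_0^t\langle\pD^{s'}\mathcal R,\pD^{s'}\vw\rangle\,\rd\tau=O(\Om^{-2})$ uniformly in $\mu$. Writing $e^{\ri\tau\Om\omvs}=(\ri\Om\omvs)^{-1}\pa_\tau e^{\ri\tau\Om\omvs}$ and integrating by parts term by term in the Fourier sum, each non-NR triplet contributes (i) boundary terms at $\tau=0,t$ prefactored by $\Om^{-1}|\omvs|^{-1}$, and (ii) a bulk term in which $\pa_\tau\vu$ is substituted, via the transformed RNS equation, by $-B(\vu,\vu)+\mu\Delta\vu$ (and similarly for $\pa_\tau\vw$ if it arises). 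The weight $|\omvs|^{-1}\le\delta^{-1}$ combined with \eqref{de:lower} costs essentially one derivative (up to a harmless $\log$); $\wt B$ itself carries one derivative; and either $B(\vu,\vu)$ or $\mu\Delta\vu$ costs a third. The aggregate three-derivative loss matches the hypotheses $s>3$ and $s'\le s-3$. Crucially, the viscous piece $\mu\Delta\vu$ arising in (ii), paired with $\pD^{2s'}\vw$ and time-integrated, is bounded by $\mu\int_0^t\|\vu\|_{s'+2}\|\vw\|_{s'}\,\rd\tau$, which remains $O(1)$ uniformly in $\mu$ by the a priori bounds on $\vu$ and $\vw$.

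Gronwall's inequality on $[0,T_0]$ then yields $\|\vw(t)\|_{s'}^2\le\|\vw(0)\|_{s'}^2+C\hat c_2^{-2}\Om^{-2}$, which translates back to the original variables by isometry. I expect the hardest part to be the bookkeeping in step three: one must distribute the $\delta^{-1}$-induced derivative loss across the three factors of the trilinear sum via \eqref{tri:est} symmetrically, bound the time-differentiated nonlinear and viscous pieces at regularity $H^{s'}$ using the $H^s$ and $H^{s^\flat}$ a priori bounds, and verify that the resulting constant depends only on $s,s^\flat,s',\bTt,E_0$. The $\mu$-independence is the subtlest point, because the viscous term $\mu\Delta\vu$ produced by integration by parts must be absorbed without introducing any $\mu^{-1}$ factor from Young's inequality.
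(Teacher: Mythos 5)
Your skeleton coincides with the paper's: transform by $e^{-\Om t\cL}$, subtract, isolate the residual supported on triplets with $|\omvs|>\delta$, gain $\Om^{-1}$ by non-stationary phase using \eqref{de:lower} to convert $|\omvs|^{-1}$ into roughly one derivative (plus $\gamma$), match the three-derivative loss with $s'\le s-3$, and close by Gr\"onwall on $[0,c/E_0^{1/2}]$. (The paper implements the integration by parts as a change of unknown $\vw=\vu-\tvu+\Om^{-1}\vrd$ rather than under the time integral, which avoids ever substituting $\pa_\tau\vw$ -- a step you only mention parenthetically.) However, two of your justifications have genuine gaps. First, the $\mu$-independent a priori bound $\pDn{s}{\tvU(t)}\lesssim E_0^{1/2}$ cannot be obtained as you describe: the 2D-like estimate \eqref{comm:est} produces a factor $\pDn{s+1}{\tvu}$, and no Gagliardo--Nirenberg/interpolation inequality bounds $\pDn{1}{\tvu}\,\pDn{s+1}{\tvu}$ by lower-order products (log-convexity of $\sigma\mapsto\pDn{\sigma}{\tvu}$ gives the reverse inequality). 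The only way to absorb that factor is into the viscous dissipation, which costs $\mu^{-1}$ and reproduces the global bound \eqref{global:Hs} with its $e^{\mu^{-2}\hat C E_{00}}$ factor -- exactly what you cannot afford, since $C$ must be independent of $\mu$. The paper instead proves the local-in-time bound \eqref{classic:e:e} for \emph{both} $\vU$ and $\tvU$ via the 3D-like estimates \eqref{tri:inf:fi}, \eqref{comm} (which hold for $\wt B$ with a generic restriction set), i.e.\ a Riccati-type argument uniform in $\mu,\Om$; you should treat $\tvU$ the same way you treat $\vU$.

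Second, your handling of the viscous piece created by substituting $\pa_\tau\vu=-B+\mu\Delta\vu$ is not $\mu$-uniform as stated: bounding it by $\mu\int_0^t\pDn{s'+2}{\vu}\,\pDn{s'}{\vw}\,\rd\tau$ and declaring this $O(1)$ uniformly in $\mu$ fails for large $\mu$ (Cauchy--Schwarz in time still leaves a factor $\mu^{1/2}$ even when $s'+2\le s^\flat+1$, and for $s'$ near $s^\flat$ the norm $\pDn{s'+2}{\vu}$ is not even controlled by the local theory, since only $s'\le s^\flat$ is assumed). The paper's resolution is structural: keep the dissipation $2\mu\pDn{s'+1}{\vw}^2$ on the left, pair the $O(\Om^{-1})\mu$ residual terms against it by Young's inequality so half the dissipation absorbs them, and control the leftover $\Om^{-2}\mu(\cdots)^2$ terms with the $\mu$-weighted space-time integrals $\mu\int\pDn{s+1}{\tvu}^2\,\rd t\lesssim E_0$ from \eqref{classic:e:e}; see \eqref{w:d:ineq}. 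This is precisely where the claimed independence of $\mu$ is earned, and your proposal as written does not secure it. With these two repairs your argument becomes the paper's proof.
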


 Its proof in \S \ref{ss:errorproof} essentially uses integrating by parts (related to Riemann-Lebesgue lemma)  
\be\label{RLL}
\Big\|\int_0^T h(t)\exp(i\om_*\Omega t)\,\rd t \Big\|\le O\Big(\frac{\|\pa_t h\|T+\|h\|}{|\om_*\Omega|}\Big)\,,\quad\;\text{for } \;\; \om_*\Omega\ne0\,,
\ee
where the norms are possibly different and $\om_*$ is just a placeholder for a triplet value that is {\it outside} the NR bandwidth. 
Based on that proof, it takes very little effort in Remark \ref{re:global:sol} to show solvability of the original RNS equation for all $t\ge0$ with sufficiently large $\Om\gg1$. Compared to error estimates of \cite[Theorem 6.3]{BMN:global:0}, \cite[Lemma 5.2]{BMN:global}, \cite[Theorem 2]{Gallagher}, \cite[Theorem 2]{Gallagher:parabolic} for the {\it exact} resonance-based approximations, the above result is explicit in terms of $\Om^{-1}$, independent of $\mu$ and depends smoothly on domain's aspect ratios $\sL_1,\sL_2$ (see Remark \ref{p:domain}). We also note that the choice of the lower bound condition \eqref{de:lower} on $\delta$ is solely for it to be relatable to the upper bound condition \eqref{de:up}. Removal of the $\log\frac1\delta$ factor is almost trivial. Further, changing the $-1$ power in \eqref{de:lower} will only require reconsidering the derivative gap $s-s'$ without essential modification of the proof. Finally we note that error estimates based on this type of proof are most relevant for $\delta$ values not too small or large because as argued in \S \ref{ss:lit}, having $\delta$ too small amounts to selection based on exact resonances whereas having $\delta\ge3$ apparently recovers the original bilinear form $B$.

 The virtue of studying NR approximations is multi-fold.

For oscillatory dynamics, nonlinearity is the medium which allows modes of the solution to  interact. One of the quantitative indicators of this nonlinear effect is the smallness of the triplet values as named above. The  exact resonance approach is only concerned with zero triplet value which apparently exemplifies this effect. This is all that one can retain by ``thinking infinitely'' and thus studying the limits, often known as singular limits  -- c.f. the seminal work of Schochet in \cite{Schochet:JDE}.  In proposing the NR-based approach however, we are concerned with the reality that physical parameters, large or small, are always {\it finite}, and  not $\infty$ or 0. In GFD applications (\cite{Pedlosky}), small parameters associated with oscillatory dynamics are not that small -- for example the Rossby, Froude and  Mach numbers typically range from 0.01 to 0.1. This means  an asymptotic approach can miss considerable amount of nonlinear coupling that plays non-negligible part in the full dynamics. Therefore we propose to include contributions from  small but finite triplet values, hence the notion of bandwidth for that smallness.  
We note that one approach to complement the study of singular limits is to obtain explicit error bounds, namely convergence rates, in terms of the small/large parameters. Then, one can go even further to study next order corrections (\cite{Gallagher, Gallagher:parabolic}).

The drastic increase of 3-mode interactions included in the NR approximations is evidenced in Appendix \ref{S:op}. There, we show the {\it lower} bound of integer point count arising from our proposed NR sets is far greater than the {\it upper} bound of integer point count in \cite[Lemma 4.1]{KY:number} for exact resonance sets.

Crucially, due to the widening of strict zero-ness to its fuzzy neighbourhood, we increase the dimension of the set of included modes when viewed in continuum. Specifically, we change from level sets to sublevel sets, which fundamentally requires novel ideas. For example, we need to study Diophantine {\it inequalities}. Then, number theoretical tools and small divisor arguments, such as those applied to RNS equations in \cite{BMN:global:0,BMN:global,Gallagher:parabolic,KY:number} (based on exact resonance, thus involving  Diophantine equations) are often not applicable here. 

This challenge is however countered by the benefit of robustness since the integer points in a neighbourhood of a curve/surface, compared to those on the curve/surface itself, form a set that is much more stable  against perturbation. For example, the properties of the Diophantine equations and small divisors in the above works depend non-continuously on the domain's aspect ratios, but this rather unphysical sensitivity is no longer an issue when equations are relaxed to inequalities. 

\subsection{Literature}\label{ss:lit}

 Recent decades have seen the power of PDE analysis in {\it exact} resonances. The list is so long that for brevity, we only mention a partial list \cite{KM:limit, BMN:global:0, BMN:global, Ch:Book, EmMa:CPDE, Gallagher, Gallagher:parabolic} and references therein.   In \cite{Schochet:JDE}, Schochet proves for a large class of multi-time-scale PDEs the existence of a generic formula of singular limits -- effectively exact-resonance-based approximations -- for unprepared initial data. In many works, small divisor condition/estimate is used to quantify how lower bound of {\it non-zero} triplet values (in absolute value) depends on wavenumbers, so in our language, setting bandwidth $\delta$ less than such lower bound will reduce the NR set to the exact resonance set. More recently, results were proven in \cite{Cheng:SIMA:2012, Cheng:SIMA:2014, ChMa:zonal} for singular limit problems with physically relevant boundary conditions and  domain geometry for which Fourier analysis is not applicable. Although exact resonance is considered therein, their convergence rate (i.e. error bound) estimates are explicitly in terms of the small parameter, thus not requiring infinitesimal assumption. 
  In most recent years, there have been the first results in {\it three-scale} singular limit problems \cite{CJS:3,CJS:MHD}.
  
In \cite{BMN:global}, the authors prove global solvability of RNS equations in torus domains of any aspect ratios for sufficiently large rotation rate $\Om$  with certain class of external forcing. For initial data in the critical $H^{\frac12}(\bTt)$ space, global solvability is proven in \cite[Theorem 6.2]{Ch:Book} with the threshold for $\Om$ depending on $\vU_0$ (not just its norm). For RNS equations with fractional Laplacian diffusion, global solvability is proven in \cite{KY:number}. All three proofs are based on the global existence of the {\it exact} resonance approximation in the $\Omega\to\infty$ limit, and rely on certain decoupled structure of the limit equation between horizontal and vertical dimensions. Note however that there is no obvious decoupling in our NR approximations, and the approach in Remark \ref{re:global:sol} does not need the decoupling argument.

In numerical analysis, the results of \cite{HW:pa} showed the success of adapting singular limit theory to parareal algorithms. It employs a finite and discretised form of the infinite integral formula of  \cite{Schochet:JDE}, and its
GFD version in \cite{EmMa:CPDE}. Such adaptation always retains all exact resonances but due to its (numerically necessary) finiteness, near resonances are also retained in some form.

On the physical front,  NR-based study has led to exciting results in GFD, e.g. \cite{Newell, SmLe:NR, LS:zonal} which show that NR is closely linked to some notable features in large-scale GFD such as zonal flows. The work of \cite{WW:fast} quantifies exchanges of energies among various modes, which strongly suggests that one must keep the Rossby and Froude numbers finite in order to explain these phenomena. In another physical area, inertial waves are of particular significance in the study of Earth's fluid interior -- recall   the  Coriolis term in the RNS equations is responsible for inertial waves. Notably, such waves are identified in gravimetric data \cite{AL:Nature} which inspired modelling work in \cite{ALH:earth, RV:shell}. They also play an important role in turbulence theory \cite{Galtier} for GFD applications.

\subsection{Outline} \label{ss:outline}

Detailed motivation of our proposed NR approximation is in \S \ref{s:results}. 
Next in \S \ref{s:con:L}, inspired by \cite[Lemma 3.1]{BMN:global}, we prove a result of estimating restricted convolution (used in our proposed bilinear form) in terms of integer point counting.

From \S \ref{S:R:sum} onwards, we embark on a journey of bounding integer point count in two-sided sublevel sets of a function of $k$, namely $|\om^\vsigma_{nk(-n-k)}|\le\delta$ with given parameters $n,\vsigma,\delta$. Here, we see connections to group theory and topology. The essence of \S \ref{S:R:sum},  as suggested by the coined word ``anti-discretise'' in its title, is to convert integer point counting into volume estimates of sublevel sets. Challenges of integer point counting in the simplest 1D case were already emphasised following Theorem \ref{thm:2Dlike}. For the 2D case, the counting further requires  the set perimeter. For example, the finite set 
\be\label{2D:e}
\big\{\,(x_1,x_2)\in\bR^2: N< x_1\le 2N,\;|x_2|<x_1^{-2}\,\big\}
\ee
for any large $N$ contains $N$ integer points but its area is as small as $O(N^{-1})$. This is indeed the essence of the elementary result of Jarnik and  Steinhaus in \cite{JS:integer} for when the 2D set is the interior region of a {\it single} Jordan curve. We prove in Lemma \ref{lem:m:Jordan} an extended version allowing multiple disjoint Jordan curves  -- this means the set can be in either the interior or the exterior of an individual curve.   Finally, 3D problems require even more information. For example,
\be\label{3D:e}
\big\{\,(x_1,x_2,x_3)\in\bR^3: N< x_1\le 2N,\; x_2^2+x_3^2 <x_1^{-4}\,\big\}
\ee
contains $N$ integer points but its volume is  $ O(N^{-3})$ and the area of its boundary is $O(N^{-1})$. Therefore it is impossible to have 3D counterpart of the previous result in 2D, an issue that we will resolve with a straightforward treatment of the third dimension. 
   
   In \S \ref{S:vol}, we prove a sharp upper bound for the volume of sublevel sets $|\om^\vsigma_{nk(-n-k)}|\le\delta$ in the $k$ space. The volume integral is first rewritten in spherical coordinates. Then, as the triplet value is monotone in the azimuthal angle for half of the $2\pi$-period, we perform a crucial change of coordinates to replace the azimuthal angle with a new coordinate essentially equivalent to the triplet value. Since the aforementioned monotonicity must change due to periodicity, the Jacobian inevitably contains singular points. In fact, there are 4 singularities of ``$\frac1{\sqrt0}$'' nature (in the radial coordinate), which gives rise to elliptic integrals. Estimates of the integrals not only can depend on the ordering of the singular points, but can potentially degenerate if the singular points cross each other, i.e. when the order changes. Although there are 4$!$=24 ways of ordering, we will show there are essentially 2 cases using simple group theory. In fact, elliptic integrals with 4 singularities of $\frac1{\sqrt0}$ type are invariant under double transpositions (and identity permutation) of the singular points. They form the Klein 4-group $K_4$ which is a normal subgroup of the 4-permutation group $S_4$, so a coset of $K_4$ in $S_4$ is regarded as one equivalent case. Rather nicely, $S_4/K_4\cong S_3$ plays a role in proving Lemma \ref{lem:sort} since an affine transform of the singular points is used to make one of them fixed so that only the ordering of the other three matters.
   
   We will then have all the lemmas needed for proving 2D-like estimates Theorem \ref{thm:2Dlike}, which is at the start of  \S \ref{s:mainproofs}. Then, we prove  Theorems \ref{thm:NR} and  \ref{thm:local:e} using standard arguments.
   
    In Appendix \S \ref{s:j:c}  we prove Lemma \ref{lem:m:Jordan} for counting integer points enclosed by {\it multiple disjoint} Jordan curves, and only use elementary mathematics except the Jordan-Schoenflies theorem from topology. In Appendix \S \ref{s:e:i} we prove elementary estimates for elliptic integrals. In Appendix \S \ref{S:op}, we construct examples of direct integer point counting with {\it lower} bounds that evidence the optimality of the choices of bandwidth $\delta$, including the logarithmic attenuation.

We remark that if the spatial domain is $\bR^3$ instead, the concept of near resonance approximation and many of the above techniques still apply, in fact without the need to anti-discretise. But the dispersive nature of the inertial waves in the whole space  is stronger than in a compact domain, thus this case is usually treated very differently -- see e.g. \cite[Ch. 5]{Ch:Book}.


\section*{\bf Acknowledgement}

Cheng and Sakellaris are supported by the Leverhulme Trust (Award No.\,RPG-2017-098). Cheng is supported by the EPSRC (Grant No.\,EP/R029628/1). We thank Beth Wingate, Colin Cotter, David Fisher and Paul Skerritt for insightful discussion and valuable feedback.


 \section{\bf Motivation for Near Resonance Approximations}\label{s:results}

 First, a few basic details for clarity. 
We let bold italics symbols denote $\bR^3$ or $\bC^3$-valued functions in the $\bTt$ spatial domain which may or may not be time dependent as determined by the context. A smooth function is infinitely many times differentiable.

The $L^2$ inner product of complex-valued functions is
\[
\ip{f(x),g(x)}=\int_{\bTt}f(x)\cdot \overline{g(x)}\,\rd x.
\]
The dot product is unrelated to conjugate, e.g.  
\(
(f_1,f_2,f_3)^T\!\cdot(g_1,g_2,g_3)^T=f_1g_1+f_2g_2+f_3g_3
\).

 For a function $g(x)\in L^p(\bTt)$ for some $p\in(1,\infty)$, its Fourier series expansion is 
\be\label{Fourier:s}
g (x) = \sum_{n \in \bZt }  g_n\,e^{\ri \cn \cdot x}  \,,
\ee 
with coefficients  
\be\label{Fourier:c}
g_n : = \dfrac{\ip{g(x),e^{\ri \cn \cdot x}}}{\ip{e^{\ri \cn \cdot x},e^{\ri \cn \cdot x}}}=\dfrac{1}{|\bTt|}\int_{\bTt}g(x)\,e^{-\ri \cn \cdot x}\,\rd x\,.
\ee
Combining these definitions gives this version of Parseval's theorem/identity:
\be\label{P:thm}
\ip{f(x),g(x)}=|\bTt|\sum_{n \in \bZt }    f_n \cdot \overline{g_n}\,.
\ee

For the Leray projection $\leray$ given in the Introduction, classical theory of elliptic operators in Lebesgue spaces shows it is a bounded operator on $L^p(\bTt)$ for any $p\in(1,\infty)$.
  It is straightforward to show self-adjoint property $\ip{\leray\vu,\vv}=\ip{\vu,\leray\vv}$ for $\bC^3$-valued $\vu\in L^p(\bTt)$ and $\vv\in L^{p\over p-1}(\bTt)$. Operator $\cL$  is skew-self-adjoint w.r.t. the $L^2$ inner product, i.e.,
\be\label{L2:pr}
\int_{\bTt}\ip{\vu,\cL\vu}\,\rd x=0\,,\quad\text{\; for $\bR^3$-valued \, $\vu\in L^2(\bTt)$}.
\ee 
Also note the classic property $\langle B(\vu,\vu'),\vu'\rangle=0$ for $\bR^3$-valued, div-free $\vu,\vu'\in H^1(\bTt)$. 
 
In an equivalent form, we remove the mean drift from the velocity field as follows and assume zero-mean velocity throughout the article. By definition of $\leray$ and identity
\be\label{B:another}
\vU \cdot \nabla \vV=\nabla\cdot(\vU \otimes \vV),\qua{\; for div-free}\vU
\ee
where $H^1$ regularity of $\vU,\vV$ is assumed, we find $B(\vU,\vV)$ is of zero mean. (This still holds for $\vU,\vV\in L^2(\bTt)$ in which case one regards $B(\vU,\vV)$ as a functional on smooth functions. See Definition \ref{def:L2wk} where setting $\vh\equiv 1$ gives the mean. Also see Definition \ref{def:wk} with $\vpsi\equiv 1$.) Thus, for $U_0(t)=\frac1{|\bTt|}\int_{\bTt}\vU(t,x)\,\rd x$,  integrate \eqref{NS} to have ${\rd\over \rd t}U_0 =\Omega\Jr U_0$. Then upon substitution 
\(
\vU(t,x)=\vU'\Big(t,\,x-\int_0^t U_0(s)\,\rd s\Big)+U_0(t)
\) 
in \eqref{NS} followed by dropping the primes, we obtain \eqref{NS} again but with an additional invariant 
\[
\int_{\bTt}\vU(t,x)\,\rd x \equiv 0, 
\quad\text{\; for all }t\ge0.
\] 

For $s\in\bR$, introduce $s$-th order pseudo-differential operator $\pD^s:=(\sqrt{-\Delta})^s$ so that
\[
\pD^s g(x)=\sum_n|\cn|^sg_n\,e^{\ri \cn \cdot x}\,,\quad\text{\; for \,}g\in H^s(\bTt).
\]
 For ease of reading, define shorthand notation
\[
\pDn s\cdot:=\| \pD^s\cdot\|_{L^2(\bTt)}.
\]
We will use it only on zero-mean functions, in which case $\|\cdot\|_s$ and $\|\cdot\|_{H^s(\bTt)}$ are equivalent. 

We let $A\lesssim_{C_1,C_2,\ldots} B$  denote $A \leq C B$, for a nonnegative constant $C$ that depends on quantities $C_1,C_2,\ldots$.   The specific relation between $C$ and $C_1,C_2,\ldots$ or the specific value of $C$ may change from inequality to inequality. For simplicity, we further omit the dependence on the domain $\bTt$ in this notation -- note Remark \ref{p:domain}.

\subsection{Transformed bilinear form}
Introduce the operator exponential 
\(
\etau
\)
 for $\tau\in\bR$ so that, for $\bC^3$-valued, div-free ${\vV_{\!\!in}\in L^2(\bTt)}$, the function  $\vV(\tau)=\etau {\vV_{\!\!in}}$  is the weak solution to 
\[
\partial_{\tau} \vV = \cL\vV\qqua{with} 
  \vV(0)= {\vV_{\!\!in}}.  
\] Existence and uniqueness of solution in  $\scc^\infty(\bR;L^2(\bTt))$  follows from Proposition \ref{prop:L}.

For a solution $\vU(t,\cdot)$ to the RNS equation, perform transformation 
\be\label{vdp}
\vu(t,\cdot) = \big(\emtau\vU(t,\cdot)\big)\Big|\tauO
\ee 
(also c.f.  \eqref{cr:eig}). Then by the usual chain rule,
\begin{align*}
{\pa_t}\vu(t)&= \Omega\Big({\pa_\tau}\big(\emtau\vU(t)\big)\Big)\Big|\tauO+\Big(\emtau\pa_t\vU(t)\Big)\Big|\tauO\\
&=\Big( \emtau\pa_t\vU(t)-\Omega\cL\emtau\vU(t)\Big)\Big|\tauO\,.
\end{align*}
It is straightforward to show that $\etau\vu$ is div-free and $\etau$ commutes with $\cL$ and $\Delta$,
so if $\vU$ with sufficient regularity  satisfies the RNS equation \eqref{NS}, then $\vu$ satisfies  
\be\label{m:NS}
\partial_t \vu +B( \tau; \vu, \vu)\Big|\tauO = \mu \Delta \vu, \qua{ \; with }\vu(0,\cdot)= \vU(0,\cdot),
\ee
where the transformed bilinear form
\be\label{def:m:B}
B({\tau}; \vu, { \vv}):=\emtau B(\etau\vu,\etau\vv).
\ee

 From now on,   a symbol such as $B$ that denotes a bilinear form can also denote its transformed version, with the distinction that we explicitly write the dependence of the latter on the extra variable $\tau$ (which is sometimes referred to as the ``fast time'').  
 
 \subsection{Eigen-basis representation}\label{ss:eig}
For any $n\in \bZtno$ and associated domain-adjusted wavevector $\cn$,  there exists (not uniquely) an orthonormal,  right-hand oriented basis of $\bR^3$  in the form $\frac{\cn}{|\cn|},\tilde\rvec_n,\tilde{\tilde\rvec}_n$ so that $\frac{\cn}{|\cn|}\times\tilde\rvec_n=\tilde{\tilde\rvec}_n$ and $\frac{\cn}{|\cn|}\times\tilde{\tilde\rvec}_n=-\tilde\rvec_n$. Then for  
\[
\rvec_n^-:=\tfrac{1}{\sqrt2}\left(\tilde\rvec_n+ \ri\,\tilde{\tilde\rvec}_n\right),\quad \rvec_n^+:=\overline{\rvec_n^-},
\] 
the vectors $\frac{\cn}{|\cn|},\rvec^-_n,\rvec^+_n$ 
form an orthonormal basis of $\bC^3$ (so e.g. $\rvec^-_n\cdot\overline{\rvec^+_n}=0$).  
 Next, introduce
\[
\eig\ns:= \rvec\ns\, e^{\ri \cn \cdot x} \, ,\quad\;\;\sigma\in\{+,-\},\;n\in \bZtno.
\]
Since combining identity $\vv=\Delta^{-1}\Delta\vv$ for zero-mean $\vv$ with identity $\Delta=\nabla\nabla\cdot-\nabla\times(\nabla\times)$ shows $\leray\vv= -\Delta^{-1}\nabla\times(\nabla\times\vv)$, we show that
 \[
\cL\vU=-\Delta^{-1}\nabla\times\left(\nabla\times(\Jr\vU)\right)=\Delta^{-1}\nabla\times\pa_3\vU,
\quad\text{\,  for  div-free, zero-mean \,}\vU\, .
\]
 Next, it is easy to see $\frac{\cn}{|\cn|}\times\rvec\ns=\sigma\ri\rvec\ns$ 
(so, in fact, $\eig\ns$ is also eigenfunction of  $\nabla\times$). Note that orthogonality $\cn\cdot \rvec\ns = 0$  implies  div-free condition $\nabla\cdot\eig\ns=0$ and that $n\ne0$ implies $\eig\ns$ is zero-mean, so by the above identity, we obtain the following   eigen-pair relation for operator $\cL$,
 \be\label{om:eig}
  \cL\eig\ns = \ri\,\om\ns\eig\ns\,,\qua{\, for dispersion relation}\om\ns:=\sigma {\cn_3\over|\cn|}, \quad\forall\,\sigma\in\{+,-\},\;n\in \bZtno.
\ee
 
 Introduce eigen-projection, as an operator acting on $\bC^3$-valued functions,  
 \be\label{def:cP}
 \cP\ns\vu:=\dfrac{\ip{\vu,\eig\ns}}{\ip{\eig\ns,\eig\ns}}\eig\ns=\big(u_n\cdot\overline{\rvec\ns}\,\big)\,\rvec\ns \,e^{\ri \cn \cdot x}\,,\qua{ for}n\in\bZtno,  
 \ee
 with the $u_n$ notation used in the same fashion as Fourier series \eqref{Fourier:s}. Also define \[\cP^\sigma_0\vu\equiv {\boldsymbol 0}.\]
 Directly from definition, we have orthogonality property
 \be\label{otg}
 \cP\ns\cP_{n'}^{\sigma'}=
 \begin{cases}\cP\ns,&\text{ if }(n,\sigma)=(n',\sigma'),\\
 0,&\text{ otherwise},
 \end{cases}
 \ee
 and  adjoint property
 \begin{align}
\label{ad2} \ip{\cP\ns\vu,\vv}&=\ip{\vu,\cP\ns\vv}=\ip{\cP\ns\vu,\cP\ns\vv}.
 \end{align}

Recall $\frac{\cn}{|\cn|},\rvec^-_n,\rvec^+_n$ (for $n\ne\vec0$) form an orthonormal basis and  recall $\nabla\cdot\vu=0$ iff $\cn\cdot u_n=0$. Therefore 
\[
\cP\ns\leray=\cP\ns=\leray\cP\ns\,,\qua{ for}n\in\bZt,
\]
and 
   \be\label{Fr:P}
 \cP_n^-\vu+\cP_n^+\vu= \leray\big(u_n\,e^{\ri \cn \cdot x}\big),\qua{ for}n\in\bZtno,
 \ee
   for any $\bC^3$-valued $\vu\in L^2(\bTt)$ that is not necessarily div-free.
 Thus by the completeness of the Fourier basis, we have expansion
 \be\label{id:P}
 \leray\vu=\sumn \cP\ns\vu\,,\quad\text{ \, for $\bC^3$-valued, zero-mean \,}\vu\in L^2(\bTt).  
 \ee
Here and below,  $\displaystyle\sumn$ stands for $\displaystyle\sum_{n\in\bZt}\sum_{\sigma\in\{+,-\}}$. Then, by \eqref{otg}, \eqref{ad2}, we have
\be\label{cP:norm:sum}
\| \leray\vu\|^2_{L^2}=\sumn \Big\|\cP\ns\vu\Big\|^2_{L^2}\,,\quad\text{ \, for zero-mean \,}\vu\in L^2(\bTt).  
\ee

 \begin{proposition}\label{prop:L}
 For any $\bC^3$-valued, div-free, zero-mean function $\vu,\vv\in L^2(\bTt)$, we have
 \be\label{e:cL}
 \cL \, \cP\ns\vu={\ri\,\om\ns} \cP\ns\vu= \cP\ns \cL\,\vu,
 \ee
 the eigen-expansion of  the operator exponential
 \be\label{e:eig:exp}
 \etau\vu=\sumn e^{\ri\,\om\ns\tau}\cP\ns\vu,
 \ee
 and the adjoint property
 \be
 \label{ad3} \ip{ \etau\vu,\vv}=\ip{\vu,\emtau\vv}.
 \ee
 \end{proposition}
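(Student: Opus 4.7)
\medskip
\noindent\textbf{Proof plan.} The three assertions are tightly linked and I would prove them in the order (i)--(ii)--(iii), with (ii) and (iii) both leaning on (i). Throughout I would keep in mind that the inner product $\ip{\dcd}$ is conjugate-linear in the second slot and that $\om\ns\in\bR$, so that complex scalars come out cleanly.

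For \eqref{e:cL}, the first equality is immediate: by \eqref{def:cP}, $\cP\ns\vu$ is the scalar multiple $(u_n\cdot\overline{\rvec\ns})\,\eig\ns$ of the eigenfunction $\eig\ns$, so linearity of $\cL$ combined with \eqref{om:eig} yields $\cL\,\cP\ns\vu=\ri\om\ns\cP\ns\vu$. For the second equality I would compute $\cP\ns\cL\vu$ from its definition and push $\cL$ across the inner product: since $\cL$ is skew-self-adjoint on real vector fields and has real entries in the Fourier representation, the same skew-adjointness extends to $\bC^3$-valued fields, giving $\ip{\cL\vu,\eig\ns}=-\ip{\vu,\cL\eig\ns}=-\ip{\vu,\ri\om\ns\eig\ns}=\ri\om\ns\ip{\vu,\eig\ns}$ (the sign reverses again because of the conjugation in the second slot). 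Dividing by $\ip{\eig\ns,\eig\ns}$ delivers $\cP\ns\cL\vu=\ri\om\ns\cP\ns\vu$.

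For \eqref{e:eig:exp}, I would introduce $\vv(\tau):=\sumn e^{\ri\om\ns\tau}\cP\ns\vu$ as a candidate and verify it is the unique solution to $\pa_\tau\vv=\cL\vv$ with $\vv(0)=\vu$. Convergence in $L^2$ is automatic from \eqref{cP:norm:sum} because $|e^{\ri\om\ns\tau}|=1$, and smoothness in $\tau$ follows because differentiating only multiplies each term by the bounded factor $\ri\om\ns$ (recall $|\om\ns|\le 1$ from the definition). Term-by-term differentiation and \eqref{e:cL} then give $\pa_\tau\vv=\sumn \ri\om\ns e^{\ri\om\ns\tau}\cP\ns\vu=\cL\vv$, while \eqref{id:P} together with $\leray\vu=\vu$ (since $\vu$ is div-free) yields $\vv(0)=\vu$. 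Uniqueness follows from a standard energy argument: any smooth solution $\vw$ of $\pa_\tau\vw=\cL\vw$ satisfies $\tfrac{\rd}{\rd\tau}\|\vw\|_{L^2}^2=2\,\mathrm{Re}\,\ip{\vw,\cL\vw}=0$ by \eqref{L2:pr} (extended to $\bC^3$-valued fields by splitting into real and imaginary parts), so $\vw\equiv 0$ whenever $\vw(0)=0$. Thus $\vv(\tau)=\etau\vu$, proving \eqref{e:eig:exp} and incidentally the existence/uniqueness claim made just before the proposition statement.

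For the adjoint identity \eqref{ad3}, I would simply pair \eqref{e:eig:exp} with $\vv$, use \eqref{ad2} to move $\cP\ns$ across, and pull the scalar $e^{\ri\om\ns\tau}$ out of the second slot as its conjugate $e^{-\ri\om\ns\tau}$. That gives $\ip{\etau\vu,\vv}=\sumn e^{\ri\om\ns\tau}\ip{\vu,\cP\ns\vv}=\ipb{\vu,\sumn e^{-\ri\om\ns\tau}\cP\ns\vv}=\ip{\vu,\emtau\vv}$. The only subtle point throughout is the careful bookkeeping of conjugation on complex scalars; I do not expect any genuine technical obstacle, since all convergence questions are trivially handled by the uniform bound $|e^{\ri\om\ns\tau}|=1$ and by the orthogonality \eqref{otg}.
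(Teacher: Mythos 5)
Your proof is correct, and its overall architecture matches the paper's: the first equality of \eqref{e:cL} from \eqref{om:eig} and \eqref{def:cP}, the expansion \eqref{e:eig:exp} by identifying the series with the solution of the initial-value problem defining $\etau$, and \eqref{ad3} by combining \eqref{e:eig:exp} with \eqref{ad2}. The one place you take a different route is the second equality of \eqref{e:cL}: the paper expands $\vu=\leray\vu=\sumn\cP\ns\vu$ via \eqref{id:P}, applies $\cL$ termwise and uses the orthogonality \eqref{otg} to read off $\cP\ns\cL\vu$, whereas you move $\cL$ across the inner product in the definition \eqref{def:cP} by skew-adjointness. Your route works, but note that \eqref{L2:pr} is stated only as a quadratic-form identity for real fields, so you should either polarise and split complex fields into real and imaginary parts (as you indicate later for the energy argument) or, more directly, use the self-adjointness of $\leray$ together with the real skew-symmetry of $\Jr$ to get $\ip{\cL\vu,\vv}=-\ip{\vu,\cL\vv}$ for $\bC^3$-valued fields; your sign bookkeeping with the conjugation in the second slot is then correct. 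A by-product of your more detailed treatment of \eqref{e:eig:exp} (uniform-in-$\tau$ convergence of the differentiated series, energy uniqueness, using boundedness of $\cL$ on $L^2$) is that it also supplies the existence and uniqueness claim for $\etau$ that the paper makes just before the statement and attributes to this proposition, so the extra detail is welcome rather than redundant.
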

 \begin{proof}
By \eqref{om:eig}, we show the first equality of \eqref{e:cL}. Combining it with \eqref{otg}, \eqref{id:P}, we show the second equality of \eqref{e:cL}. 
 Recalling the definition of    $\etau$, we use \eqref{id:P}, \eqref{e:cL} to prove \eqref{e:eig:exp}. Combining the latter with \eqref{ad2}, we see adjointness \eqref{ad3} naturally follows.
\end{proof} 
Combining this with \eqref{def:cP} shows operators $\pD^s$, $\cP\ns$ and $\etau$ all commute. Then  
by \eqref{cP:norm:sum},  
\be\label{norm:pr}
\|\pD^s g(x)\|_{L^2(\bTt)}=\|\etau\pD^s g(x)\|_{L^2(\bTt)}=\|\pD^s \etau g(x)\|_{L^2(\bTt)},\qquad\forall\,\tau\in\bR.
\ee
Also, combining \eqref{ad3} with \eqref{def:m:B} shows
 \be\label{ip:uvw}
 \ip{B(\tau;\vu,\vv),\vw}= \ip{B(\etau\vu,\etau\vv),\etau\vw},\quad \text{for $\bC^3$-valued, div-free, zero-mean }\vu,\vv,\vw.
 \ee
 
Finally, the convolution condition $n+k+m=\vec0$ is equivalent to the fact that
\be\label{conv:cond}
\cP_{-n}^{\sigma_1}B(\cP_k^{\sigma_2}\vu,\cP_m^{\sigma_3}\vv)=\boldsymbol 0\,,\qua{ if}n+k+m\ne\vec0.
\ee
Thus, for a mapping $F$ from $(\bZtno)^3$ to a Banach space, let shorthand notation $\displaystyle\sumcnkm F(n,k,m)$ denote $\displaystyle\sum_{n\in\bZt}\sum_{k\in\bZt}F(n,k,-n-k)$ and let $\displaystyle\sumnkmsc$ denote $\displaystyle\sumcnkm\sum_{\vsigma\in\{+,-\}^3}$.
\begin{remark}\label{re:sum}
The {\bf absolute} convergence of the above sums in the Banach space (the codomain of $F$) implies the commutability of the nested sums, and implies symmetry with respect the choice of running indices, for example,
\[\begin{aligned}
\displaystyle\sumcnkm F(n, k, m)=\sum_{n\in\bZt}&\sum_{m\in\bZt}F(n, -n-m, m)=\sum_{m\in\bZt}\sum_{n\in\bZt}F(n,-n-m,m),\text{\; etc.,}\\
\text{thus \quad} \sumcnkm F(n, k, m)&\;=\;\sumcnkm F(n, m, k)\;=\;\sumcnkm F(k, m, n),\text{\; etc.}
\end{aligned}\] 
Without absolute convergence, even the simple sum $\sum_{n\in\bZ}\big(\sum_{k=-1}^1k\big)$ is not commutable. In this article, all nested sums converge absolutely in a Banach space that is evident from context. 
 Also   note the completeness of Fourier basis in $H^s(\bTt)$ spaces.
\end{remark}

For the reader's reference, we also expand \eqref{vdp} for div-free, zero-mean $\vu,\vU$  as follows:
 \be\label{cr:eig}
 \vu(t,x) = \sumn \exp(-\ri\,\om\ns\Omega t)\,\cP\ns\vU(t,x) \; \; \iff \;\; \vU(t,x) = \sumn \exp(\ri\,\om\ns\Omega t)\,\cP\ns\vu(t,x).
 \ee

 \subsection{Motivating NR approximations}\label{ss:motivate}
 Apply expansion \eqref{e:eig:exp} on all three operator exponentials in the transformed bilinear form \eqref{def:m:B}. Then noting the (bi)linearity of $B(\dcd)$ and convolution condition \eqref{conv:cond}, we find   
\begin{align}
\label{B:full1}B(\tau;\vu,\vv)&=\sumnkmsc\cP_{-n}^{\sigma_1}B(\cP_k^{\sigma_2}\vu,\cP_m^{\sigma_3}\vv)\exp(\ri\omvs\tau),\quad\text{ for  div-free, zero-mean }\vu,\vv,
\end{align}
 with the triplet value given by
\[
 \omvs= -\om_{-n}^{\sigma_1}+\om_k^{\sigma_2}+\om_{m}^{\sigma_3} =\sigma_1 \frac{ \cn_3  }{ |{ \cn}|}+\sigma_2  \frac{\ck_3  }{ |{ \ck}|} +\sigma_3 \frac{ \cm_3  }{ |{\cm}|}
 \]
   (note $B(\dcd)$ is zero-mean so the $n=\vec 0$ term is irrelevant) which was already defined in \eqref{om:star}.
Noting the heuristic bound $\vu_t\sim O(1)$ independent of $\Om\gg1$ as seen from the transformed RNS equation \eqref{m:NS}, we  argue that \eqref{B:full1} suggests the applicability of  \eqref{RLL} with $\omega^*=\omvs$
 on the analysis of \eqref{m:NS}. The smaller the triplet value  is, the more important role it plays in the dynamics.

Motivated by such observation, we propose approximations $\wt B(\dcd)$ and $\wt B(\tau;\dcd)$ that retain and discard eigen-mode interactions based on the smallness of $\om^\vsigma_{nkm}$. This prompts the definition of NR set $\cN$, using restriction criterion based on bandwidth, as
in \eqref{def:cN}. For convenience, our version of the NR set $\cN$ is defined without involving the triplet of signs $\vsigma$. An alternative definition of $\cN$ can involve membership of $(n,k,m,\vsigma)$. Most of the results would not be affected by this but their proofs would not have access to the $\vsigma$-independent bilinear form \eqref{wt:B:full2}, hence lengthier.

 Then define the approximation of the transformed bilinear form \eqref{B:full1} by restriction of $\ind_\cN$, 
\begin{align}
\label{wt:B:full1}\wt B(\tau;\vu,\vv) &:= \sumnkmsc\cP_{-n}^{\sigma_1}B(\cP_k^{\sigma_2}\vu,\cP_m^{\sigma_3}\vv)\exp(\ri \omvs\tau)\,\ind_{\cN}({n,k,m}),\\
\intertext{and naturally, define its pre-transformed version}\label{wt:B:def}\wt B(\vU,\vV)&:=\sumnkmsc\cP_{-n}^{\sigma_1}B(\cP_k^{\sigma_2}\vU,\cP_m^{\sigma_3}\vV)\,\ind_{\cN}({n,k,m}).
\end{align}
 Noting eigen-expansion \eqref{e:eig:exp}, we have the following relation analogous to \eqref{def:m:B},
\be\label{m:B:nr}
\wt B({\tau}; \vu, { \vv})=\emtau \wt B(\etau\vu,\etau\vv).
\ee

Finally, in the eigen-expansion \eqref{wt:B:def},  suppress the running indices $\vec\sigma$ via \eqref{Fr:P} and the (bi)linearity of $B(\dcd)$,  which results in the $\vsigma$-independent bilinear form \eqref{wt:B:full2} 
and the PDE \eqref{NR} from the Introduction where we named them the NR approximation of the RNS equations. 

By the same argument as for the RNS equations, the NR approximation \eqref{NR} is equivalent to its transformed version
\be\label{m:NR}
\partial_t \tvu +\wt B( \tau; \tvu, \tvu)\Big|\tauO = \mu \Delta \tvu, \qua{ \; with } \tvu(0,\cdot)= \tvU(0,\cdot),
\ee
where $\tvu(t,\cdot) = e^{-\Omega t\cL}\tvU(t,\cdot).$

Note the identity for $\bC^3$-valued, smooth functions $\vu,\vv,\vw$
\be
\label{tri:e}\big\langle\wt B(\vu,\vv),\vw\big\rangle=|\bTt|\sumcnkm  (u_k\cdot\cm\,\ri)\, v_m\cdot (\overline w)_n\,\ind_{\cN}({n,k,m}),\quad\text{ with div-free }\vw\,,
\ee
 where $(\overline w)_n$ denotes the Fourier coefficients of $\overline \vw$. It of course also holds for $\ind_\cN\equiv1$, i.e. for $B$.

 It is physically preferable for $\wt B$ to ``inherit'' from $B$ the following properties. 

  \begin{proposition}\label{prop:Bt}
  Consider bilinear form $\wt B(\dcd)$ defined in \eqref{wt:B:full2} with a \emph{generic} $\cNg\subset\big(\bZt\big)^3$. For $\bR^3$-valued, div-free, zero-mean smooth functions $\vU,\vV$, 
  \be\label{realreal}
\text{$\ind_{\cNg}(\dcd,\cdot)$ being an even function \; implies \; $\wt B(\vU,\vV)$ is \,$\bR^3$-valued.}
\ee
We also have
\be\label{c:p}
\text{symmetry $\ind_{\cNg}({n,k,m})=\ind_{\cNg}({m,k, n})$ \; implies \; }\;\big\langle\wt B(\vU,\vV),\vV\big\rangle_{}=0 .\ee
The same properties still hold with $\wt B(\dcd)$  replaced by $\wt B(\tau;\dcd)$ for any $\tau\in\bR$.
  \end{proposition}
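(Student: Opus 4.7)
The plan is to handle the two untransformed claims directly from the Fourier representations, then deduce the $\tau$-dependent version by invoking the transformation identity \eqref{m:B:nr} together with the adjointness \eqref{ad3}, noting that $e^{\tau\cL}$ preserves div-freeness, zero-mean, and real-valuedness (the last because $\cL=\leray\Jr\leray$ is built from real operators, so $\cL$ maps $\bR^3$-valued functions to $\bR^3$-valued functions).

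For the realness claim \eqref{realreal} my approach is to conjugate the expansion \eqref{wt:B:full2}. Since $\leray$ and $\nabla$ have real symbols, $\overline{B(e^{\ri\check k\cdot x}U_k,e^{\ri\check m\cdot x}V_m)}=B(e^{-\ri\check k\cdot x}\overline{U_k},e^{-\ri\check m\cdot x}\overline{V_m})$, and real-valuedness of $\vU,\vV$ gives $\overline{U_k}=U_{-k}$, $\overline{V_m}=V_{-m}$. Re-indexing $k\mapsto -k$, $m\mapsto -m$ turns the factor $\ind_{\cNg}(-k-m,k,m)$ into $\ind_{\cNg}(k+m,-k,-m)$, which equals the original under the evenness hypothesis. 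So $\overline{\wt B(\vU,\vV)}=\wt B(\vU,\vV)$, i.e.\ $\wt B(\vU,\vV)$ is $\bR^3$-valued.

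For the cancellation \eqref{c:p}, I will use identity \eqref{tri:e}. Since $\vV$ is real, $(\overline V)_n=V_n$, so
\[
\bigl\langle\wt B(\vU,\vV),\vV\bigr\rangle=\ri|\bTt|\sumcnkm(U_k\cdot\check m)\,(V_m\cdot V_n)\,\ind_{\cNg}(n,k,m).
\]
Swap the dummy indices $n\leftrightarrow m$; the convolution $n+k+m=\vzero$ is preserved and, by the hypothesis $\ind_{\cNg}(n,k,m)=\ind_{\cNg}(m,k,n)$, the indicator is unchanged. Averaging the sum with its swap produces the symmetric combination $(U_k\cdot(\check n+\check m))(V_n\cdot V_m)$. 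Using $\check n+\check m=-\check k$ from the convolution condition, the bracket becomes $-U_k\cdot\check k$, which vanishes for all $k$ by the div-free condition on $\vU$. Hence the whole sum is zero.

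Finally, for the $\tau$-dependent version I apply \eqref{m:B:nr}: $\wt B(\tau;\vU,\vV)=e^{-\tau\cL}\wt B(e^{\tau\cL}\vU,e^{\tau\cL}\vV)$. Because $e^{\tau\cL}$ maps real, div-free, zero-mean functions to functions of the same type, the just-proved realness of $\wt B$ gives that $\wt B(e^{\tau\cL}\vU,e^{\tau\cL}\vV)$ is $\bR^3$-valued, and then so is its image under $e^{-\tau\cL}$. For the cancellation, the adjoint property \eqref{ad3} yields
\[
\bigl\langle\wt B(\tau;\vU,\vV),\vV\bigr\rangle=\bigl\langle\wt B(e^{\tau\cL}\vU,e^{\tau\cL}\vV),\,e^{\tau\cL}\vV\bigr\rangle=0
\]
by the untransformed identity applied to the admissible fields $e^{\tau\cL}\vU,e^{\tau\cL}\vV$. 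The only mild subtlety anywhere is the interaction of complex conjugation with $\leray$ in the realness step, and the preservation of the real/div-free structure under $e^{\tau\cL}$ in the final step; both are straightforward given the earlier machinery.
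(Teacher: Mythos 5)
Your proof is correct and follows essentially the same route as the paper's: conjugating the expansion \eqref{wt:B:full2} and re-indexing for \eqref{realreal}, using \eqref{tri:e} with the $n\leftrightarrow m$ swap, convolution, and incompressibility for \eqref{c:p}, and reducing the $\tau$-dependent case via \eqref{m:B:nr}, \eqref{ad3}, and the invariance of real, div-free, zero-mean fields under $\etau$. No gaps.
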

\begin{proof} First, due to \eqref{m:B:nr}, \eqref{ad3} and the fact that $\etau$ maps the set of $\bR^3$-valued, div-free, zero-mean functions onto the set itself, it suffices to just prove  \eqref{realreal}, \eqref{c:p}.

By taking the complex conjugate of \eqref{Fourier:s}, we find that
 $g(x)$ is $\bR^3$-valued iff $\overline{g_n}= g_{-n}$ for all $n$.
  Then take the complex conjugate of \eqref{wt:B:full2} with real-valued $\vU,\vV$ to prove \eqref{realreal}.

By self-adjointness of $\leray$, the assumed $\vV=\leray\vV$ and  \eqref{tri:e}, noting $\overline\vV=\vV$, we  show
$
\big\langle\wt B(\vU,\vV),\vV\big\rangle_{}=|\bTt|\sumcnkm  (U_k\cdot\cm\,\ri)\, V_m\cdot V_n\,\ind_{\cNg}({n,k,m})$. 
Switching $m, n$ and adding it back, we prove
\eqref{c:p} by  incompressibility $U_k\cdot (\cn+\cm)=0$ and sum symmetry as in Remark \ref{re:sum}.
\end{proof}


\section{\bf  Restricted Convolution and Integer Point Counting}\label{s:con:L}

The restriction of 3-mode interactions to the NR set $\cN$ leads to the need for estimating restricted convolution. The lemma below is inspired by \cite[Lemma 3.1]{BMN:global} and extends it to a version allowing different factors in the triple product. Also see \cite[Lemma 6.2]{Ch:Book}.

\begin{lemma}\label{restricted:L}
Let  $\ind_\cN( \cdot, \dcd)$ denote the characteristic of set $\cN\subset\big(\bZt\big)^3$ and suppose it is symmetric with respect to all permutations in its arguments. Define
\[
\cNp:=\big\{(n,k,m)\in\cN: |\cn|\ge |\ck| \ge |\cm|\big\}.
\] 
Suppose there exist a constant $\beta\in\big[0,\frac32\big]$ and a constant $C_\cN$ so that the counting condition
\begin{equation}\label{ci}
   \sum_{ \substack{ k \in \bZt }} \ind_{\cNp}(n,k,-n-k)    \le C_\cN \, |\cn|^{\beta},\qquad \forall n \in \bZtno
\end{equation}
holds. 
Then there exists a constant $C$ so that for zero-mean $\vu,\vv\in H^{\beta\over2}(\bTt)$ and $\vw\in L^2(\bTt)$,  
\be\label{restricted}
\begin{aligned}&|\bTt|^{\frac32}\sumcnkm  | u_n|\,| v_k|\,| w_m |  \,\ind_\cN( n, k, m)
 \le C \sqrt {C_\cN}    \left(  \pDn0\vu\,\pDn {\frac {\beta} 2}\vv+\pDn {\frac {\beta} 2}\vu \,\pDn0\vv \right)\pDn0\vw.
 \end{aligned}
 \ee
 The estimate still holds if we permute  $u,v, w$ on the left-hand side. 
\end{lemma}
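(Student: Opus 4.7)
The plan is to exploit the permutation symmetry of $\ind_\cN$ to reduce the sum over $\cN$ to six pieces over the ordered subset $\cNp$, bound each by a two-step Cauchy--Schwarz in which the counting hypothesis \eqref{ci} supplies the key weight $C_\cN|\cn|^\beta$, and then redistribute this weight onto $u$ or $v$ as needed via the triangle inequality available in $\cNp$.

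Concretely, because $\ind_\cN$ is invariant under permutations of its three arguments,
\[
\ind_\cN(n,k,m)\;\le\;\sum_{\sigma\in S_3}\ind_{\cNp}(\sigma(n,k,m)),
\]
with equality up to contributions from ties in $(|\cn|,|\ck|,|\cm|)$. Changing variables in each summand, the left-hand side of \eqref{restricted} is dominated by a sum of six terms of the form
\[
T(f,g,h):=|\bTt|^{3/2}\sumcnkm |f_n|\,|g_k|\,|h_m|\,\ind_{\cNp}(n,k,m),
\]
where $(f,g,h)$ runs through the permutations of $(u,v,w)$.

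For each such $T$, I would apply Cauchy--Schwarz first in $k$ (with $m=-n-k$), invoking \eqref{ci} as $\bigl(\sum_k\ind_{\cNp}\bigr)^{1/2}\le\sqrt{C_\cN}\,|\cn|^{\beta/2}$, and then in $n$. Using Parseval \eqref{P:thm} together with $\sum_{n,k}|g_k|^2|h_{-n-k}|^2=\bigl(\sum_k|g_k|^2\bigr)\bigl(\sum_j|h_j|^2\bigr)$, this yields
\[
T(f,g,h)\;\lesssim\;\sqrt{C_\cN}\;\pDn{\beta/2}{f}\,\pDn{0}{g}\,\pDn{0}{h}.
\]
For the four permutations with $f\in\{u,v\}$, this is already a term on the right-hand side of \eqref{restricted}.

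The main obstacle is the two permutations $(f,g,h)\in\{(w,u,v),(w,v,u)\}$, for which the preceding argument would produce the inadmissible factor $\pDn{\beta/2}{w}$. The key observation to sidestep this is that inside $\cNp$ one has $\cn=-\ck-\cm$ and $|\cm|\le|\ck|$, so $|\cn|\le|\ck|+|\cm|\le 2|\ck|$ and hence $|\cn|^\beta\ind_{\cNp}\le 2^\beta|\ck|^\beta\ind_{\cNp}$. Accordingly, before the second Cauchy--Schwarz I would absorb $|\cn|^{\beta/2}$ into the inner $k$-sum, majorise it by $2^{\beta/2}|\ck|^{\beta/2}$, and then proceed; the remainder of the computation now delivers
\[
T(f,g,h)\;\lesssim\;\sqrt{C_\cN}\;\pDn{0}{f}\,\pDn{\beta/2}{g}\,\pDn{0}{h},
\]
which for the two problem permutations produces exactly $\pDn{\beta/2}{u}\pDn{0}{v}\pDn{0}{w}$ and $\pDn{0}{u}\pDn{\beta/2}{v}\pDn{0}{w}$. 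Summing the six contributions yields \eqref{restricted}, with the implicit constant absorbing the combinatorial factor from ties and the $2^{\beta/2}$ from the transfer step. The final permutation remark of the lemma is immediate from the $(u,v,w)$-symmetry of this argument.
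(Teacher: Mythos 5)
Your proposal is correct. The skeleton is the same as the paper's: symmetrise $\ind_\cN$ into six ordered pieces over $\cNp$, apply Cauchy--Schwarz in $k$ so that the counting hypothesis \eqref{ci} enters as $\sqrt{C_\cN}\,|\cn|^{\beta/2}$, and use the pointwise comparability $|\cn|\le|\ck|+|\cm|\le 2|\ck|$ on $\cNp$ to decide whether the $\beta/2$ derivative lands on the $n$-factor or the $k$-factor, which is exactly how the paper ensures the derivative always falls on $\vu$ or $\vv$ and never on $\vw$. Where you genuinely differ is in the organisation: the paper first performs a Littlewood--Paley type localisation into dyadic annuli $\An_i$ (so that $k\in\An_i$ and the ordering force $n\in\An_i\cup\An_{i+1}$, whence $|\cn|^\beta\lesssim|\ck|^\beta$), and only then runs the two Cauchy--Schwarz steps, once over the combined $(i,n)$-sum and once over $k$; you instead run the two Cauchy--Schwarz steps directly, absorbing $|\cn|^{\beta/2}$ into the inner $k$-sum and majorising it by $2^{\beta/2}|\ck|^{\beta/2}$ on the support of $\ind_{\cNp}$ before dropping the indicator, which makes the dyadic decomposition unnecessary. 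Your route is the more elementary one and yields the same bound with an explicit $2^{\beta/2}$ loss; the paper's annulus bookkeeping buys nothing extra here beyond packaging the same near-diagonal localisation. Two small points you handle implicitly and should state: all summands are nonnegative, so the relabelling of the six permuted sums is justified by Tonelli (the paper's Remark on absolute convergence), and the $n=\vzero$ modes are harmless since $\vu,\vv,\vw$ are zero-mean while \eqref{ci} is only assumed for $n\in\bZtno$.
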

When applied to standard product $\ip{fg,h}=\int_{\bTt}(fg) \, \overline h\,\rd x$, the counting condition \eqref{ci} holds for $\beta=\frac32$, so the  upper bound in \eqref{restricted} requires $\frac32$ derivatives on either $f$ or $g$. This is in fact a consequence of combining \eqref{tri:inf:fi}(ii) with interpolation, and this is different from the  commoner upper bound $\pDn0{fg}\lesssim\pDn{\frac32+\gamma}f\pDn0g$ for $\gamma>0$ by \eqref{tri:inf:fi}(i). In contrast, as  $\beta=1$ is used in the proof of 2D-like estimates Theorem \ref{thm:2Dlike},  restriction to $\cN$ ``gains'' half a derivative.

\begin{proof}
 It suffices to consider scalar-valued functions. 
Also note that, 
since the following proof only involves absolute values of the summands, 
all nested sums are commutable.

Define\[
\eta (a, b, c):=\begin{cases}
1,\;\;&\text{ if }\, a\ge b \ge c,\\
0,\;\;&\text{otherwise},
\end{cases}\] 
so
\(
\ind_{\cNp}(n,k,m)=\ind_\cN(n,k,m)\,{\eta (|\cn|,|\ck|,|\cm|)}
\). 
 For any wavevectors $n,k,m$, we have
\[\begin{aligned}
1&\le
 \eta (|\cn|,|\ck|,|\cm|) + \eta (|\ck|,|\cn|,|\cm|)+ \eta (|\cm|,|\ck|,|\cn|)\\
 &+\eta (|\cn|,|\cm|,|\ck|) + \eta (|\ck|,|\cm|,|\cn|)+ \eta (|\cm|,|\cn|,|\ck|).
\end{aligned}
\]
Then the summand in  \eqref{restricted} is bounded by the corresponding six terms $|u_nv_kw_m |\ind_{\cNp}(n,k,m)+|u_nv_kw_m |\ind_\cN(n,k,m){\eta (|\ck|,|\cn|,|\cm|)}+|u_nv_kw_m |\ind_\cN(n,k,m){\eta (|\cm|,|\ck|,|\cn|)}+\ldots$ 
Next, permute $n,k,m$ in each term in such a way that the $\eta $ function appears as $\eta (|\cn|,|\ck|,|\cm|)$. For example, switch $n,k$ in the second term, which results in $|u_kv_nw_m |\ind_\cN(k, n,m){\eta (|\cn|,|\ck|,|\cm|)}$. Also 
note the assumed symmetry of $\ind_{\cN}(\dcd,\cdot)$ and symmetry of convolution sum by Remark \ref{re:sum}.  After these steps, all six terms can be rewritten so that $\ind_{\cNp}(n,k,m)$
 is a common factor: 
\be\label{6parts} \sumcnkm | u_n v_k w_m | \,\ind_\cN( n, k, m)\le
\sumcnkm  \big(| u_n v_k w_m |+  | u_k v_n w_m |+  \text{four terms} \big)\ind_\cNp( n, k, m).
\ee

Next introduce the oblate annulus
\[
 \An_i:=\big\{k\in\bZt:2^{i-1}\le |\ck| < 2^i\big\},\qquad i\in\bZ^+.
 \] 
Then 
  the ordering $|\cn| \ge  |\ck| \ge |\cm|$ together with Littlewood-Paley type localisation $k\in\An_i$
 implies  $n=-k-m\in\An_i\cup\An_{i+1}$, effectively localising the sum to be close to the ``diagonal'' where $|\cn|$ and $|\ck|$ are comparable. Consider the first term on the right-hand side of \eqref{6parts}: 
\begin{align}
\sumcnkm  | u_n v_k w_m | \,\ind_{\cNp}( n, k, m)
\label{tri:step0}& 
= \sum_{i \in \bZ^+} \,\sum_{k\in\An_i}\Big(\sum_{n\in\An_i\cup\An_{i+1}} | u_n v_k w_{-k-n} |\ind_{\cNp}\Big)  \\
&\label{tri:step1} = \sum_{i \in \bZ^+} \,\sum_{n\in\An_i\cup\An_{i+1}} \Big(|u_n |\sum_{k\in\An_i}  | v_k   w_{-k-n} |\ind_{\cNp}\Big) .  
\end{align}
Here and below, the shortened  $\ind_{\cNp}$ notation is always understood as $\ind_{\cNp}( n, k, -k-n)$.
Define
\[
a_i:=\sum_{\substack{ n\in\An_i\cup\An_{i+1} }} \Big(\sum_{\substack{ k\in\An_i    }}   | v_k   w_{-k-n}   |  \ind_{\cNp}\Big)^2.
\]and apply Cauchy-Schwartz inequality on the combined $(i,n)$-sum
\begin{align} 
 & \eqref{tri:step1} \,
  {\leq}   \sqrt{\sum_{i \in \bZ^+}\sum_{\substack{ n\in\An_i\cup\An_{i+1}   }} | u_n |^2    }\sqrt{\sum_{i \in \bZ^+}a_i }
 \label{estimate1}   .
\end{align}
 
It remains to estimate:
\begin{align*} 
a_i\,&{\leq} \sum_{\substack{ n\in\An_i\cup\An_{i+1}  }} \bigg[ \Big(\sum_{\substack{ k\in\An_i  }} | v_k   w_{-k-n}  |^{2}   \Big) \Big(\sum_{\substack{ k\in\An_i }} \ind_{\cNp}^2   \Big) \bigg]\\[2mm]
& \le C_\cN  \, \sum_{\substack{  k  \in   \An_i  }}\sum_{\substack{ n\in\An_i\cup\An_{i+1}    }} |\cn|^{\beta} | v_k   w_{-k-n}  |^2    \qquad\qquad \text{ (by $\ind_{\cNp}^2=\ind_{\cNp}$ and \eqref{ci})}  \\[2mm]
& \lesssim C_\cN   \sum_{\substack{ k \in    \An_i  }}\bigg[|\ck|^{{\beta}} |v_k|^2 \sum_{\substack{ n  \in\An_i\cup\An_{i+1}   }} |    w_{-k-n}  |^{2}  \bigg]  \qquad  \text{ (by  definition of }\An_i) \\[2mm]
&\lesssim C_\cN\sum_{\substack{ k  \in\An_i }} |\ck|^{\beta} |v_k|^2\,|\bTt|^{-1}\|\vw \|_{L^{2}}^2
\end{align*}
where in the last step we relaxed the sum in $n$ to all $n\in\bZt$ and used Parseval's theorem.

Finally, combining the above and \eqref{estimate1}, we use  Parseval's theorem on $\vu$ and $\pD^{\beta\over2}\vv$ to prove:
\be\label{re:b1}
\begin{aligned} 
 & \sumcnkm | u_n v_k w_m |  \, \ind_{\cNp}(n,k,m) \lesssim \sqrt{ C_\cN}\,\pDn0\vu \,\pDn {\frac {\beta} 2}\vv \, \pDn0\vw\,|\bTt|^{-\frac32}\,.
\end{aligned}
\ee

Recall that in the right-hand side of \eqref{tri:step0}, for a fixed $i$, the other two indices satisfy $2|\cn|\ge|\ck|$,  allowing us to insert after \eqref{tri:step0}
\[
\lesssim \sum_{i \in \bZ^+}\, \sum_{k\in\An_i}\sum_{n\in\An_i\cup\An_{i+1}}\Big| \big(|\cn|^{\beta/2}u_n\big) \big(|\ck|^{-\beta/2}v_k\big) w_{-k-n} \Big|\,\ind_{\cNp} \,,
\] 
and immediately let
$|\cn|^{\beta/2}u_n$ be denoted by $u_n'$ and $|\ck|^{-\beta/2}v_k$ by $v_k'$. Then we perform the same steps (except with the primes) afterwards until the right-hand side of \eqref{re:b1}; therefore we prove
\[
\sumcnkm | u_n v_k w_m | \,\ind_{\cNp}(n,k,m)   \lesssim \sqrt{C_\cN}\,\pDn {\frac {\beta} 2}\vu \, \pDn0\vv \,\pDn0\vw \,|\bTt|^{-\frac32}\,.
\]
The essence of this estimate and \eqref{re:b1} is that every one of the six terms inside the right-hand side parentheses of \eqref{6parts} can be bounded by the product of three norms where one has the freedom to apply the $H^{\frac {\beta} 2}$ norm on either the factor with $n$ index or the factor with $k$ index. Since at least one of $u$ and $v$ is equipped with either $n$ or $k$ index, the proof is complete.
\end{proof}


\sectionb{``Anti-discretise'' from Integer Point Counting to Integrals}\label{S:R:sum}

With motivation already given in \S\ref{ss:outline}, we start the mathematics right away.

Recall definition \eqref{def:cN} of the NR set $\cN$ and the type of integer point counting used in assumption \eqref{ci} of Lemma \ref{restricted:L}. They inspire us to fix wavevector $n\ne0$,  and define mapping $F_\sigg:\bR^3\backslash\{0,-n\}\mapsto\bR$ for any given pair $(\sigma_1,\sigma_2)\in\{+,-\}^2$ as follows (recall $\cn_3=n_3$, etc.)
\be\label{fpm:Car}
F_\sigg( k):= \sigma_1 \frac{n_3}{|\cn|} + \sigma_2 \frac{k_3}{|\ck|} + \frac{m_3}{|\cm|} ,\qua{ \; for }  m=-n-k.
\ee

Then, for $\cNp$ defined in Lemma \ref{restricted:L}, the condition $\ind_{\cNp}(n,k,-n-k)=1$ implies $k\notin\{0,-n\}$ and at least one of $|F_{n,\pm,\pm}( k)|\le\delta(n,k,-n-k)$ must hold.  Also, $\ind_{\cNp}(n,k,-n-k)=1$ implies $| \cn+\ck| \le | \ck| \le | \cn|$   which further implies  $\frac12| \cn| \leq |\ck| \leq |\cn|$ and $\delta(n,k,-n-k)=\delta(n,n,n)$ under \eqref{de:max}.
So, we   define the following continuous set of real numbers (not just integer points)
\be\label{sym2}
V_\sigg:=\big\{  k\in \bR^3\backslash\{0,-n\} \, : \, |F_\sigg( k)| \le \delta(n,n,n)\text{\,\, and \,\,}\tfrac12|\cn| \leq | \ck| \leq |\cn|\big\},
\ee
and relax the integer counting  in the left-hand side of assumption \eqref{ci} as
\be\label{N:le:V}
 \sum_{{ k \in \bZt }} \ind_{\cNp}(n,k,-n-k)\le \sum_{ (\sigma_1,\sigma_2)\in\{+,-\}^2}\;\sum_{ k \in \bZt}\ind_{V_\sigg}( k).
\ee
Apparently, we can  use the following working assumption for as far as counting is concerned: 
\[
\text{for fixed $n$, the bandwidth $\delta$ used in \eqref{sym2} is constant.}
\]

For simplicity, we omit the subscripts $\sigg$ in $F$ and $V$ until Theorem \ref{thm:int:vol}.

Since $V$ is Lebesgue measurable,  the integral  $\int_{\bR^3}\ind_{V}(k)\,\rd k$ is regarded as  volume $vol(V)$. The heuristic is that integer point counting resembles the Riemann sum of an indicator function which is then linked to the corresponding integral -- note however, all integrals here are Lebesgue integrals. Thus,  much of the work in \S\ref{S:R:sum} is to bound 
$\sum_{ k \in \bZt}\ind_V( k) -\int_{ k \in \bR^3}\ind_V( k)$. 
The estimate of the volume integral itself is given in \S\ref{S:vol}.

 \subsection{Reduction of 3D integer point counting to 2D}We will use Lemma \ref{lem:m:Jordan} for estimating number of {\it planar} integer points. Priori to that, we first ``anti-discretise'' in the vertical dimension since it plays a different role in the dispersion relation. For $k=(k_1,k_2,k_3)$, let
 \[
  k_H = (k_1,k_2) \quad\text{ and } \quad\ck_H=\big(\,{k_1\over\sL_1},{k_2\over\sL_2}\,\big).
  \]  
 Since $\dfrac {\partial F } {\partial k_3}  =  \sigma_2 \dfrac {| \ck_H |^2} {|\ck|^3} -  \dfrac { |\cm_H|^2 } {|\cm|^3}$, we have, 
\[\begin{aligned}
&\dfrac{\pa F}{\pa k_3}=0 \qua{implies} |\ck_H|^{4 \over 3}(k_3^2+2k_3n_3+n_3^2+|\cm_H|^2)=|\cm_H|^{4 \over 3}(k_3^2+|\ck_H|^2).
\end{aligned}
\]
At any fixed $n\in\bR^3\backslash\{0\}$ and $k_H\in \bR^2\backslash\{0,-n_H\}$ (thus fixed $m_H\in \bR^2\backslash\{0\}$), $F$ is defined for all $k_3\in\bR$ and what is in above shows $\frac{\pa F}{\pa k_3}$ either vanishes at most two $k_3\in\bR$, or vanishes everywhere. Then $\bR$ is the union of no more than three $k_3$-monotonic intervals for $F$, so $\big\{k_3\big| \, |F( k)| \le\delta\big\}$ is the union of at most three intervals (an isolated point is counted as an interval). In each interval, the number of integers $k_3$ and the integral of the indicator function in $k_3$ differ by at most $1$.   But even for
 $k_H\in\{0,-n_H\}$, since $F$ is defined in $\bR^3\backslash\{0,-n\}$ and the form of $\frac {\partial F } {\partial k_3} $ is simplified, we can  show the previous statement still holds.
 Therefore, 
\be\label{R:sum:k3}
\sum_{k_3 \in \bZ}\ind_V( k) \le 3+\int_{ \bR}\ind_V( k) \,\rd k_3 \,,\quad\;\forall\,k_H \in  \bR^2.
\ee 
Next, define
\[
S(k_3):=\big\{k_H\in \bR^2:\,(k_H,k_3)\in V
\big\}.
\]
Since any $k\in V$ satisfies $ |\ck_H| \leq |\cn|$, we sum both sides of \eqref{R:sum:k3} for all $k_H\in \bZ^2$ with $|\ck_H| \le |\cn|$ and apply Lemma \ref{lem:m:Jordan} to estimate $\sum_{k_H\in \bZ^2 ,\, |\ck_H| \le |\cn|}3$. This shows  
 \be\label{3Dto2D}
\begin{aligned}
\sum_{ k \in \bZt}\ind_V( k) \le  C_{n,\sL_{1,2}} +  \sum_{k_H\in \bZ^2, \, |\ck_H| \le | \cn|}&\int_{\bR}\ind_V( k) \,\rd k_3   \le C_{n,\sL_{1,2}}+ \int_{\bR} \# \big(\bZ^2\cap S(k_3)\big)  \,\rd k_3,\\
&\text{for constant } C_{n,\sL_{1,2}}\lesssim \sL_1\sL_2|\cn|^2+(\sL_1+\sL_2)|\cn|.
\end{aligned}
\ee
Here, we switched the sum and the integral because the sum only contains finitely many terms.

\subsection{Planar integer point counting with topological singularity}\label{s:planar}
We move to the counting problem $\# \big(\bZ^2\cap S(k_3)\big)$ from \eqref{3Dto2D}. 
By an elementary result due to Jarnik and Steinhaus \cite{JS:integer}, given a closed, rectifiable Jordan curve $\jc\subset\bR^2$ with the open set $S$ being its interior (c.f. Jordan curve theorem), we have  
\[
\big|\#\big(\bZ^2\cap(S\cup\jc)\big) - Area(S)\big| < Len(\jc).
\]
For application to our problem, we prove Lemma \ref{lem:m:Jordan} for {\it multiple disjoint} Jordan curves, which means the set of interest can be in either the interior or the exterior of a given curve. 

Next we focus on $\pa S(k_3)$. A topological subtlety is that  $\pa S(k_3)$ may contain tangentially contacting curves -- see Figure \ref{fig:top} for a 3D version of this. Another subtlety is that the {\it closed} set $S(k_3)$  may contain isolated curve or point which is not covered by Lemma \ref{lem:m:Jordan} according to the type of {\it open} sets it requires.  

\medskip

\begin{center}
\begin{figure}[h]
\hspace{-0.1\textwidth}\includegraphics[width=1.1\textwidth]{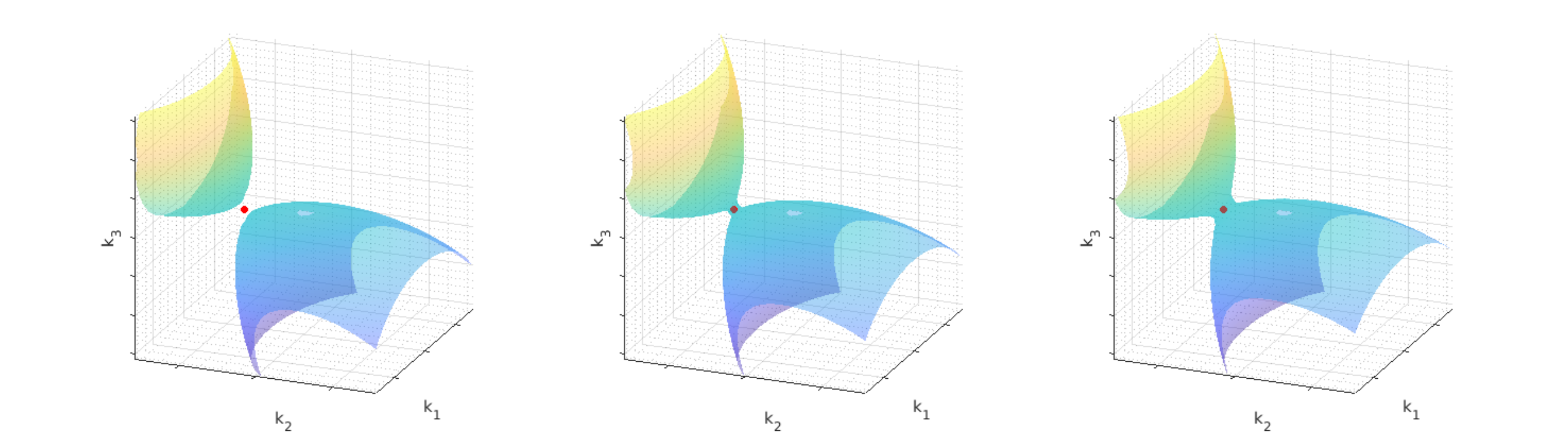}

\caption{{\small Three nearby level sets of $F_{n,+,+}(k)$ with $\sL_1=\sL_2=1$ at values $-\tfrac{28}{313}$, $-\tfrac{25}{313}$ and $-\tfrac{22}{313}$ for $n=(0,312,25)^T$ with the red dots all at $k=(0,-156,-\frac{25}2)^T$. It is a stationary point of $F_{n,+,+}(k)$ and the middle graph shows the level set at the corresponding critical value. There, the red dot is the only overlap of the two tangentially contacting surfaces, thus this level set fails to be manifold(s). The other two level sets are away from the critical values of $F_{n,+,+}$,  so they must be manifolds. In essence, perturbation of the singular case yields regular approximations.}}\label{fig:top}
\end{figure}
\end{center}
\vspace{-5mm}

\begin{lemma}\label{lem:2D:count}
Consider a fixed $n\in\bR^3\backslash\{0\}$,
  $k_3\in(-|\cn|,|\cn|)$ and bandwidth $\delta\ge 0$. Suppose
\be\label{ne0}
  k_3  (k_3+n_3)(2k_3+n_3)\ne0.
\ee
Then there exist constants $C,C'$    independent of $ n, k_3,\delta,\sL_1,\sL_2$ so that
\be\label{Z2:est}
\# \big(\bZ^2\cap S(k_3)\big) \le Area\big( S(k_3)\big) + C(\sL_1+\sL_2)\sqrt{ |\cn|^2-k_3^2 } +C'.
\ee 
where, for $\delta=0$, we have $Area(S(k_3))=0$. 
\end{lemma}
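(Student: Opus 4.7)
The strategy is to reduce to Lemma \ref{lem:m:Jordan}, the multi-Jordan-curve version of the Jarnik--Steinhaus counting estimate, which will bound $\#(\bZ^2\cap S(k_3)) - \mathrm{Area}(S(k_3))$ by (a constant times) the total length of $\partial S(k_3)$. The boundary is contained in the union of the two ellipses $\{|\ck|=\tfrac12|\cn|\}$ and $\{|\ck|=|\cn|\}$ cut at height $k_3$, together with the two planar level curves $\{k_H: F(k_H,k_3)=\pm\delta\}$. The elliptical contribution is a pair of smooth closed curves whose combined length in the $k_H$-plane is already of the desired size $O((\sL_1+\sL_2)\sqrt{|\cn|^2-k_3^2})$, so the real work concerns the level curves of $F(\cdot,k_3)$.

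My first task would be to use the non-degeneracy hypothesis (\ref{ne0}) to show that $F(\cdot,k_3):\bR^2\setminus\{0,-n_H\}\to\bR$ has only finitely many critical points in the relevant annulus. A direct calculation gives
\[
\pa_{k_j}F(k_H,k_3)=\sL_j^{-2}\Bigl(-\sigma_2\,\tfrac{k_3\,k_j}{|\ck|^3}+\tfrac{m_3\,m_j}{|\cm|^3}\Bigr),\qquad j=1,2,
\]
with $m=-n-k$. Simultaneous vanishing forces $k_H\parallel m_H$, hence $k_H$ lies on the line through $0$ and $-n_H$; combining with the remaining scalar equation reduces the critical points to roots of a single-variable equation that, under $k_3(k_3+n_3)(2k_3+n_3)\ne 0$, has at most finitely many solutions. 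The excluded values $k_3\in\{0,-n_3,-n_3/2\}$ correspond respectively to $k$ horizontal, $m$ horizontal, and $k_3=m_3$, and are precisely those producing the tangential contacts of level surfaces illustrated in Figure \ref{fig:top}.

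Consequently $F(\cdot,k_3)$ has only finitely many critical values. Since $S(k_3)$ depends monotonically on $\delta$ and the right-hand side of (\ref{Z2:est}) is continuous in $\delta$, I would perturb $\delta\mapsto\delta\pm\varepsilon$ to avoid critical values and then pass to the limit $\varepsilon\to 0$ in the inequality, so that $\pm\delta$ may be assumed to be regular values. Each of $\{F(\cdot,k_3)=\pm\delta\}$ is then a smooth $1$-manifold, and by the Jordan--Schoenflies theorem $\partial S(k_3)$ decomposes into finitely many pairwise disjoint rectifiable Jordan curves as required by Lemma \ref{lem:m:Jordan}. To bound the total length of the level-curve contribution I would show that the number of connected components of $\{F=\pm\delta\}$ inside the annulus is bounded by an absolute constant (rationalizing the equation $F=\pm\delta$ yields a polynomial equation of bounded degree in $k_1,k_2$, so Bezout-type bounds control intersections with axis-parallel lines), while each component lies inside the annulus and hence has length at most the annular perimeter $O((\sL_1+\sL_2)\sqrt{|\cn|^2-k_3^2})$.

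Applying Lemma \ref{lem:m:Jordan} to the resulting family of disjoint Jordan curves then yields (\ref{Z2:est}). The $\delta=0$ statement is handled by the same analysis: $\{F(\cdot,k_3)=0\}\cap\{\tfrac12|\cn|\le|\ck|\le|\cn|\}$ is a smooth $1$-manifold outside the finitely many critical points, hence of zero $2$-dimensional Lebesgue measure. The main obstacle I expect is precisely the uniform-in-$(n,k_3,\delta,\sL_1,\sL_2)$ bound on the number of connected components of $\{F=\pm\delta\}$ inside the annulus; without care the constant could blow up near degenerations, and the non-degeneracy hypothesis (\ref{ne0}) must be used quantitatively (not merely qualitatively) to rule this out.
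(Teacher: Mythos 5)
Your overall architecture (reduce to Lemma \ref{lem:m:Jordan}, perturb $\delta$ off the finitely many critical values, treat $\delta=0$ via the level set having measure zero) matches the paper's, but there is a genuine gap at the decisive step: the length bound. You assert that each connected component of $\{F(\cdot,k_3)=\pm\delta\}$ ``lies inside the annulus and hence has length at most the annular perimeter.'' Containment in a planar region does \emph{not} bound arc length by that region's perimeter: a level-curve component confined to the annulus could a priori spiral or oscillate and have length far exceeding $O\big((\sL_1+\sL_2)\sqrt{|\cn|^2-k_3^2}\big)$, and bounding the \emph{number} of components (your Bezout remark) does nothing to exclude this. This is exactly the crux of the lemma. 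The paper's proof controls it structurally: in oblong polar coordinates the azimuthal derivative \eqref{Fphi} shows $G(r_k,\cdot)$ is strictly monotone on each half-period $[\phi_n,\phi_n+\pi]$, so the level set meets each ellipse $r_k=\mathrm{const}$ in at most one point per half-period and is a graph $\phi_k=\Phi(r_k)$ over a union of at most boundedly many $r_k$-intervals; the length is then $\lesssim(\sL_1+\sL_2)\,r_{out}\,(1+\int|\Phi'|)$, and the total variation of $\Phi$ is bounded by counting its extrema, which the paper shows satisfy a polynomial equation with nonvanishing leading term $C_1^3|\ck|^5$ (this is where $\delta\ne\sigma_1 n_3/|\cn|$ and \eqref{ne0} enter quantitatively). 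Without an argument of this kind your proof does not deliver \eqref{Z2:est}.

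A secondary issue: to invoke Lemma \ref{lem:m:Jordan} you need $\partial S(k_3)$ (or rather $\partial S_\pm$) expressed as finitely many \emph{disjoint rectifiable Jordan curves}, and the level-curve arcs generally terminate on the inner/outer edges of the annulus rather than closing up. The paper handles this by explicitly extending the graph $\Gamma_\delta$ along the edges and using the reflection symmetry $G(r_k,\phi_k)=G(r_k,2\phi_n-\phi_k)$ to concatenate the two half-annulus pieces into genuine Jordan curves, and it also verifies that $S_+$ equals the closure of its interior (so isolated boundary pieces do not spoil the counting), before combining $S_+$ and $S_-$ by inclusion--exclusion. Your sketch leaves this gluing, and the $S_+\cap S_-$ bookkeeping, implicit.
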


First, introduce  ``oblong'' cylindrical  coordinates  $\left(r_k,   \phi_k, k_3 \right)  \in [0, \infty) \times \bT_{2\pi } \times \bR $ for  $ k$ so that
\[
k_H = (\sL_1r_k\cos \phi_k, \sL_2r_k\sin \phi_k).
\]
Then $|\ck_H|=r_k$ and $|\ck|=\sqrt{r_k^2+k_3^2}$. 
Similar notations apply to wavevector $n$. 

At fixed $n$ and $k_3$, since assumption \eqref{ne0} ensures $F(k_H,k_3)$ is defined for all $k_H\in\bR^2$, we consider the cylindrical form of $F$ which is the mapping $G:[0, \infty) \times \bT_{2\pi }\mapsto\bR$ defined as
\begin{align}
G(r_k,\phi_k)&:=F(\sL_1r_k\cos \phi_k, \sL_2r_k\sin \phi_k, k_3)
\label{def:wt:F}=\sigma_1 \frac{n_3}{|\cn|} + \sigma_2 \frac{k_3}{\sqrt{r_k^2+k_3^2}} + \frac{m_3}{|\cm|},\\[2mm]
\label{kn:ip}\text{where \quad } |\cm|^2&= r_k^2 + 2  r_k r_n\cos (\phi_k - \phi_n) + r_n^2 +  m_3^2 \\
\nonumber\text{ and \quad }m_3&=-(k_3+n_3),
\end{align}
with \eqref{kn:ip} due to $\cm=-\ck-\cn$ and $\ck\cdot \cn=k_3n_3 + \ck_H\cdot\cn_H=k_3n_3 +r_k r_n\cos (\phi_k - \phi_n)$. 
 
  The azimuthal derivative will be of particular importance:  
  \be\label{Fphi}
  \dfrac {\partial G} {\partial \phi_k}  = \dfrac{m_3}{|\cm|^3}  r_n r_k \sin( \phi_k - \phi_n).
\ee
Also, by  the definition of $G$, we have 
\begin{subequations}\label{Fr}
\begin{align}
 &\frac{\pa}{\pa r_k}G(r_k,\phi_k)=0\\
 \label{Fr2}\implies\,&-\sigma_2\frac{k_3r_k}{(r_k^2+k_3^2)^{3/2}} =\frac{m_3( r_k + r_n\cos (\phi_k - \phi_n))}{\big( r_k^2 + 2  r_k r_n\cos (\phi_k - \phi_n) + r_n^2+m_3^2\big)^{3/2}}\\[1mm]
 \implies\,&r_k \text{ is a zero of a polynomial with leading term }
 (m_3^2-k_3^2)r_k^8\ne0\text{ under \eqref{ne0}.} 
 \end{align}
 \end{subequations} 
\begin{proof}[Proof of Lemma \ref{lem:2D:count}]

At fixed $n,k_3$,  
 \eqref{ne0} ensures $F(k_H,k_3)$ is defined for all  $k_H\in \bR^2$.  Let
\[
r_{in}:=\sqrt{\max\{0,  { \tfrac14|\cn|^2-k_3^2}\,\}}\quad\text{ and }\quad
r_{out} :=\sqrt{ |\cn|^2-k_3^2\, }\,,
\] 
that correspond to the inner edge (if $r_{in}>0$) and outer edge of the intersection of 3D annulus $\frac12|\cn|\le|\ck|\le|\cn|$ and the plane $k_3=constant$. If $r_{in}=0$ then the inner edge does not exist.

If $r_n=0$ then  $G(r_k,\phi_k)$ is independent of $\phi_k$ by \eqref{Fphi}. In view of \eqref{Fr}, the boundary $\pa S(k_3)$ consists of at most 11 concentric ellipses (on which either $|F|=\delta$ or $|\ck_H|=r_{in}$ or $r_{out}$), hence proving \eqref{Z2:est} by virtue of Lemma \ref{lem:m:Jordan} -- even if some of these ellipses are possibly not in the closure of the interior of $S(k_3)$. Note, for the case of $\delta=0$, the set $S(k_3)$ is a subset of these ellipses, and thus $Area(S(k_3))=0$.
Thus, for the rest of the proof, we assume 
\be\label{rn:0}
r_n>0 .
\ee
Combining this with \eqref{ne0}, \eqref{Fphi}, we find that $G$ at a fixed $r_k>0$ is strictly monotonic for $\phi_k\in[\phi_n-\pi,\phi_n]$ and $\phi_k\in[\phi_n,\phi_n+\pi]$, changing monotonicity at $\phi_k\equiv\phi_n\text{\;(mod\,}\pi)$. We will refer to this property as {\bf half-period monotonicity}. Immediately, for the case of $\delta=0$, the set $S(k_3)$ intersects with any ellipse of a fixed $r_k$ coordinate at no more than 2 points, which proves $Area(S(k_3))=0$, i.e. zero Lebesgue measure in $\bR^2$, as the last statement of the lemma. 

We recognise that $S(k_3)= S_+\cap S_-$ where
\[\begin{aligned}
S_+:=
\big\{&k_H \in\bR^2\, : \,|\ck_H|\in[r_{in}, r_{out} ] \text{\, and \,}F(k_H,k_3)\le \delta\,\big\},
\end{aligned}\]
and $S_-$ is defined similarly but with  $F$ replaced by $-F$. 

We now focus on the upper bound of $\# \big(\bZ^2\cap S_+\big)$. 
Since $\pa S_+$ is related to level set $F=\delta$,  we have some topological concerns as described before the lemma. To this end, define
\[
{\mathcal T}:=\bigcup_{\iota=0,1}\Big\{G(r_k,\phi_n+\iota\pi): r_k\text{ is a stationary point of }G(r_k,\phi_n+\iota \pi)\text{, or }r_k\in \{0,r_{in},r_{out}\} \Big\}.
\]

Next, we claim: there exist constants $C,C'$   independent of $ n, k_3,\delta,\sL_1,\sL_2$ so that
\be\label{claim:1side}
\text{if \, }\delta\notin{\mathcal T}\cup\big\{ \sigma_1\tfrac{n_3}{|\cn|},0\big\},
\;\;\text{ then  \, } \# \big(\bZ^2\cap S_+\big) \le Area\big( S_+\big) + C(\sL_1+\sL_2)r_{out} +C'.
\ee
The proof of this claim is as follows, where when we say \eqref{claim:1side} is proven ``by virtue of  Lemma \ref{lem:m:Jordan}'', it is understood that the role of $S$ is played by the interior of $S_+$ and a subtlety (related to what was pointed out before) is addressed here: we must show $S_+$ equals the closure of its interior, namely, every point of $S_+$ is a limit point of its interior.
In fact, recall $F(k)$ is continuous and a limit point of limit points is a limit point. Let $S'$ be the set of points in $S_+$ where $F<\delta$. Any point of $S'$ is either away from the edge(s) of the annulus, thus in the interior of $S_+$, or on an edge, thus a limit point of the previous type of points. Next, at every point of $S_+\backslash S'$ we must have  $F=G(r_k,\phi_k)=\delta$, so it is a limit point of $S'$ due to $\delta\notin\mathcal T$ if $\phi_k\equiv\phi_n\text{\;(mod\,}\pi)$, and due to the half-period monotonicity otherwise.

Now, define mapping $H:[0,\infty)\mapsto\bR$ as
\[
H(r_k)=\big(G(r_k,\phi_n) -\delta\big)\big(G(r_k,\phi_n+\pi)-\delta\big).
\]
If $H(r_k)>0$ for all $r_k\in[r_{in},r_{out}]$, then by continuity,  both $G(r_k,\phi_n)$ and $G(r_k,\phi_n+ \pi)$  stay on the same side of $\delta$. By the half-period monotonicity, we find that $S_+$ is either empty or the entire annulus $r_{in}\le r_k\le r_{out}$, which then proves \eqref{claim:1side} by virtue of  Lemma \ref{lem:m:Jordan}. 

Now we prove \eqref{claim:1side} assuming additionally  $H(\cdot)$ in $[r_{in},r_{out}]$ takes non-positive value(s). If there is a zero $r_k^*$ of $H$ 
 so that, e.g. $G(r_k^*,\phi_n) -\delta=0$, then  $G(r_k^*,\phi_n+\pi) -\delta\ne0$ by the half-period monotonicity. Also, assumption $\delta\notin{\mathcal T}$ implies ${\pa\over\pa r_k}G(r_k^*,\phi_n)\ne0$. 
  The argument so far shows $H$ is strictly monotonic at a neighbourhood of any of its zeros. 
 Since $H$ has at most 18 zeros due to \eqref{Fr} and  since $\delta\notin\mathcal T$ implies none of these zeros is in $\{0,r_{in}, r_{out}\}$ and  $H(0)>0$, there exist 
 \[ r_{in}\le\rkj{1}<\rkj{2}<\ldots<\rkj{2J}\le r_{out}\,,
 \qua{\, for some} J\in[1,10],\]
  that consist of all the zeros of $H$ located in $[r_{in},r_{out}]$ and if necessary, also include $r_{in}$ and/or $r_{out}$, so that, at any $r_k\in[r_{in},r_{out}]$,
 \[
H(r_k)\le 0\;\iff\;  r_k\in {\mathbf I}:=\bigcup_{j=1}^J\,\big[\rkj{2j-1},\rkj{2j}\big] . 
\]  
Then by half-period monotonicity, at given $r_k\in[r_{in},r_{out}]$,
\be\label{G:root}
\text{equation }G(r_k,\phi_k)=\delta\text{ has \,}\left\{\begin{aligned}
&\text{exactly one root }\phi_k\in[\phi_n,\phi_n+\pi]\,,&\text{if  }r_k\in {\mathbf I},\\ 
 & \text{no root $\phi_k\in\bT_{2\pi}$}\,,&\text{otherwise}.
\end{aligned}\right.
\ee
This allows us to define function $\Phi:{\mathbf I}\mapsto[\phi_n,\phi_n+\pi]$ to be that one root, namely
\be\label{G:d:I}
G(r_k,\Phi(r_k))=\delta,\qquad\forall\,r_k\in{\mathbf I}.
\ee
We will only be concerned with $r_k\in\mathbf I$ from now on. 
By definitions of $H, \mathbf I$ and $\Phi$, we have
\be\label{2:equiv}
\Phi(r_k^*)\equiv\phi_n\text{\;(mod\,}\pi)\;\iff\;H(r_k^*)=0\;\iff\;r_k^*\in\{\rkj{1},\rkj{2},\ldots,\rkj{2J}\}\backslash\{0,r_{in},r_{out}\},
\ee
where the exclusion of the last small set is due to $\delta\notin\mathcal T$.
  Then, in view of \eqref{Fphi}, \eqref{rn:0} and $m_3\ne0$ by assumption \eqref{ne0}, we apply the implicit function theorem to find
 that 
 \(
 \Phi(r_k)
 \) is differentiable provided that $r_k$ is {\it not} from any of the $r_*$ satisfying \eqref{2:equiv}. On the other hand, for any of the $r_*$ satisfying  \eqref{2:equiv}, 
 we restrict the level set $G=\delta$ in a small neighbourhood of such point $(r_k^*,\Phi(r_k^*))$,  and use assumption $\delta\notin{\mathcal T}$ and the implicit function theorem to find that the $r_k$ coordinate can be expressed  as a differentiable function of $\phi_k$.
In summary, the set
\be\label{def:G:d}
\Gamma_{\delta}:=\big\{(\sL_1r_k\cos\Phi(r_k),\sL_2r_k\sin\Phi(r_k))\in \bR^2:r_k\in {\mathbf I}\big\}
\ee
consists of  differentiable simple\footnote{\,A simple curve is a curve that does not cross itself.} curve(s) on which $F(\dcd,k_3)=\delta$.
 
 The curves of $\Gamma_\delta$ are rectifiable as long as the following improper integral is finite:
\be\label{Len:G}
\begin{aligned}
Len\big(\Gamma_{\delta}\big)&=\int_{{\mathbf I}}
\Big|\big({\sL_1(r_k\cos\Phi(r_k))',\,\sL_2(r_k\sin\Phi(r_k))'}\big)\Big| \,\rd r_k
 \lesssim (\sL_1+\sL_2)\Big(1+\int_{{\mathbf I}}|\Phi'|\,\rd r_k\Big) r_{out}.
\end{aligned}\ee
 Now  $\int_{\rkj{2j-1}}^{\rkj{2j}}|\Phi'|\,\rd r_k$ is the total variation of $\Phi(r_k)$ for $r_k\in(\rkj{2j-1},\rkj{2j})$ and is bounded by 1 plus the number of its extremum points, up to constant factor. We are allowed to assume $r_k\ne0$ as far as the integral \eqref{Len:G} is concerned. By \eqref{kn:ip} we have
\[
2r_n\cos\big(\Phi(r_k)-\phi_n\big)=-r_k+\frac{|\cm|^2-r_n^2-m_3^2}{r_k}\,.
\] Also, note $m_3\ne0$ by \eqref{ne0}. Since $F(\dcd,k_3)\big|_{\Gamma_\delta}=\delta$, we use \eqref{def:wt:F} and the above to find
 \[
2r_n\cos\big(\Phi(r_k)-\phi_n\big)=-r_k+\frac{\frac{|\ck|^2m_3^2}{(C_1|\ck|-k_3)^2}-r_n^2-m_3^2}{r_k}\quad\text{ \, on \, $\Gamma_\delta$ \, with \; } C_1:=\sigma_2\big(\delta-\sigma_1\tfrac{n_3}{|\cn|}\big).
\] 
Then, at an extremum point of $\Phi(r_k)$, the $r_k$ derivative of the above vanishes, i.e.
 \[
0=-1+\frac{1}{r_k}\,{\rd\over\rd r_k}\frac{|\ck|^2m_3^2}{(C_1|\ck|-k_3)^2}
 -\frac{\frac{|\ck|^2m_3^2}{(C_1|\ck|-k_3)^2}-r_n^2-m_3^2}{r_k^2}.
\]
By identity $\frac{1}{r_k}\,{\rd\over\rd r_k}=\frac{1}{|\ck|}\,{\rd\over\rd |\ck|}$ at fixed $k_3$, 
 we calculate the above derivative and substitute $r_k^2=|\ck|^2-k_3^2$ to find that  $|\ck|$ satisfies a polynomial equation with leading term $C_1^3|\ck|^5$ with $C_1\ne0$ under \eqref{claim:1side}. Thus,  we have established an upper bound on the number of extremum points of $\Phi(r_k)$. Since the number of intervals in ${\mathbf I}$ is $J\in[1,10]$, we continue from \eqref{Len:G} to find
\be\label{Phi:tv}
Len\big(\Gamma_{\delta}\big) \lesssim(\sL_1+\sL_2)r_{out}.
\ee

We move on to  $\pa S_+$. 
First, we consider the  half annulus excluding the edge(s), namely $(r,\phi_k)\in(r_{in},r_{out})\times[\phi_n,\phi_n+\pi]$. Any such point belongs to $\pa S_+$ if and only if it belongs to  level set $F=\delta$, which is due to half-period monotonicity and continuity. Then, by \eqref{G:root}, \eqref{def:G:d}, the closure of all such points equals $\Gamma_{\delta}$ which of course is still part of $\pa S_+$. 

Next, consider the half inner edge, namely $r_k=r_{in}$ and $\phi_k\in[\phi_n,\phi_n+\pi]$. Recall that if $r_{in}=0$ then inner edge does not exist. So, let $r_{in}>0$. If $\rkj{1}=r_{in}>0$, then by \eqref{2:equiv} we have $\Phi(r_{in})\in (\phi_n,\phi_n+\pi)$. By half-period monotonicity, we extend $\Gamma_\delta$ by part of the inner edge for either $\phi_k\in [\phi_n,\Phi(r_{in}))$ or $\phi_k\in(\Phi(r_{in}),\phi_n+\pi]$ so that $G(r_k,\phi_k)\le\delta$, and the result of this extension is still a simple curve. If $\rkj{1}>r_{in}>0$, then by \eqref{G:root}, the sign of $G-\delta\ne0$ is fixed on the half inner edge  which therefore is either in $\pa S_+$ or in $\bR^2\backslash\pa S_+$. In the former case, we also add it to $\Gamma_\delta$. Then, for such added curve (if any), \begin{itemize}
\item its length is controlled by the right-hand side of \eqref{Phi:tv};
\item it equals the intersection of $\pa S_+$ and the half inner edge;
\item its endpoints behave in exactly one of the two ways: either one endpoint satisifies $\phi_k\equiv\phi_n\text{\;(mod\,}\pi)$ and the other endpoint on the pre-extended $\Gamma_\delta$, or,  both endpoints satisfy $\phi_k\equiv\phi_n\text{\;(mod\,}\pi)$ and the pre-extended $\Gamma_\delta$ does not intersect the half inner edge.
\end{itemize} 

The same type of construction is used to extend $\Gamma_\delta$ onto the half outer edge.  We name the result of these extensions as $\Gamma_\delta^\sharp$. By construction, it is the intersection $\pa S_+$ with the  {\it closed} half annulus $[r_{in},r_{out}]\times[\phi_n,\phi_n+\pi]$. 
Further, every connected component of $\Gamma_\delta^\sharp$ is a simple curve with endpoints satisfying $\phi_k\equiv\phi_n\text{\;(mod\,}\pi)$, and conversely, by  \eqref{2:equiv} and the disjoint nature of components of ${\mathbf I}$, every point on $\Gamma_\delta^\sharp$ satisfying $\phi_k\equiv\phi_n\text{\;(mod\,}\pi)$ is an endpoint of the aforementioned connected component. 
Then, by  symmetry
\(
G(r_k,\phi_k)=G(r_k,2\phi_n-\phi_k)
\),
we find that performing such symmetry action on $\Gamma_\delta^\sharp$  yields the intersection $\pa S_+$ with the closed half annulus $[r_{in}, r_{out}]\times[\phi_n-\pi,\phi_n]$. The above information on the overlapping of $\Gamma_\delta^\sharp$ and $\phi_k\equiv\phi_n\text{\;(mod\,}\pi)$ ensures that the two sets are joined together to form disjoint Jordan curves. In view of \eqref{Phi:tv},  the proof of \eqref{claim:1side} is complete by virtue of Lemma \ref{lem:m:Jordan}. 

But what about the exclude $\delta$ values? 

 Since \eqref{Fr} ensures ${\mathcal T}$ is a finite set\footnote{\,In fact, it suffices to know that ${\mathcal T}$ has zero Lebesgue measure in $\bR$ by Sard's theorem.}, any non-negative $\delta\in {\mathcal T}\cup\big\{ \sigma_1\tfrac{n_3}{|\cn|},0\big\}$ can be approximated by $\delta_1>\delta$ from outside the previous set, so that $\delta_1$ satisfies \eqref{claim:1side}. In the upper bound in \eqref{claim:1side}, the excessive amount in the $Area(S_+)$ term due to such approximation equals
 \[
Area(S_{\delta,\delta_1})\qua{for}S_{\delta,\delta_1}:=\big\{(k_H\in \bR^2\,\big|\,\delta <f( k_H,k_3)\le \delta_1\text{ \, and \, }r_{in}\le|\ck_H|\le r_{out}\big\}.
 \]
 Since the integrable function $\ind_{S_{\delta,\delta_1}}(k_H)$ is monotonic in $\delta_1$ and
 \(
 \lim_{\delta_1\to\delta}\ind_{S_{\delta,\delta_1}}(k_H)=0
 \),
 we apply the monotone convergence theorem   to show that   $Area(S_{\delta,\delta_1})\to 0$ as $\delta_1\to\delta$. Noting the constants $C,C'$ in \eqref{claim:1side} are independent of $\delta$, we prove the conclusion of \eqref{claim:1side} for any $\delta\ge 0$.
 
 The counting of $\# \big(\bZ^2\cap S_-\big)$ follows the same steps except for $F,G$ replaced by $-F,-G$. Then, we apply inclusion-exclusion principle on $\big(\bZ^2\cap S(k_3)\big)= \big(\bZ^2\cap S_+\big)\cap \big(\bZ^2\cap S_-\big)$ to have
 \[
 \begin{aligned}
 \# \big(\bZ^2\cap S(k_3)\big) \le \,&  C(\sL_1+\sL_2)r_{out} +C'
 +Area\big( S_+\big) +Area\big( S_-\big)  
  -  \# \big(\bZ^2\cap \{r_{in}\le r_k\le r_{out}\}\big).
 \end{aligned}
 \]
 Apply inclusion-exclusion principle on  $S(k_3)=S_+\cap S_-$ to have
 \[
  Area\big( S_+\big) +Area\big( S_-\big)
  =   Area\big( S(k_3)\big)  + Area \big(  \{r_{in}\le r_k\le r_{out}\}\big).
  \]
  Therefore it suffices to prove an upper bound for
  \[
  Area \big(  \{r_{in}\le r_k\le r_{out}\}\big) -  \# \big(\bZ^2\cap \{r_{in}\le r_k\le r_{out}\}\big) \le C(\sL_1+\sL_2)r_{out}.
\]
This is achieved by applying Lemma \ref{lem:m:Jordan} on the complement set of annulus $ \{r_{in}\le r_k\le r_{out}\}$ relative to a rectangle containing that annulus. The proof of  \eqref{Z2:est} is complete.
  \end{proof}

To link back to 3D volume integral, we combine \eqref{N:le:V}, \eqref{3Dto2D} and Lemma \ref{lem:2D:count}, noting that the exclusion of finite number of $k_3$ values   does not affect the Lebesgue integral involved, to prove:

\begin{theorem}\label{thm:int:vol}For the near resonance set $\cN$ in \eqref{def:cN} subject to \eqref{de:max}, and for the set $\cNp\subset\cN$ from Lemma \ref{restricted:L}, consider the sets $V_\sigg$ given in \eqref{fpm:Car}, \eqref{sym2} with fixed wavevector $n\ne0$, thus fixed $\delta\ge 0$. Then there exists  constant $C$ independent of $n, \delta,\sigma_1,\sigma_2,\sL_1,\sL_2$ so that
\[
 \sum_{{ k \in \bZt }} \ind_{\cNp}(n,k,-n-k)\le C|\cn|^2(\sL_1\sL_2+\sL_1+\sL_2)+ \sum_{ (\sigma_1,\sigma_2)\in\{+,-\}^2}vol(V_\sigg),
\]
where, for $\delta=0$, it is understood that $vol( V_\sigg)=0$.
\end{theorem}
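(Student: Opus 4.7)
The plan is to chain together the three estimates highlighted just before the theorem statement and then observe that the error terms all fit within the claimed $C|\cn|^2(\sL_1\sL_2+\sL_1+\sL_2)$ budget. First I would invoke \eqref{N:le:V} to replace the integer count over $\cNp$ with a sum, over the four sign pairs $(\sigma_1,\sigma_2)\in\{+,-\}^2$, of the integer counts $\sum_{k\in\bZt}\ind_{V_\sigg}(k)$. This reduces matters to bounding each such count by $vol(V_\sigg)$ plus acceptable error.

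Next I would apply the 3D-to-2D reduction \eqref{3Dto2D} to each count $\sum_{k\in\bZt}\ind_{V_\sigg}(k)$, yielding a boundary contribution $C_{n,\sL_{1,2}}\lesssim \sL_1\sL_2|\cn|^2+(\sL_1+\sL_2)|\cn|$ plus an outer integral in $k_3\in(-|\cn|,|\cn|)$ of the planar count $\#(\bZ^2\cap S(k_3))$. Here I would pause to note that the hypothesis \eqref{ne0} of Lemma \ref{lem:2D:count} fails only on the finite set $k_3\in\{0,-n_3,-n_3/2\}$, which is a Lebesgue-null set in $\bR$, so excluding it has no effect on the $k_3$-integral of $\#(\bZ^2\cap S(k_3))$ and on $Area(S(k_3))$.

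At all remaining $k_3$ values I would then apply Lemma \ref{lem:2D:count} to bound $\#(\bZ^2\cap S(k_3))\le Area(S(k_3))+C(\sL_1+\sL_2)\sqrt{|\cn|^2-k_3^2}+C'$. Integrating in $k_3$ over the interval of length $2|\cn|$, the Fubini-type identity $\int_{-|\cn|}^{|\cn|}Area(S(k_3))\,\rd k_3=vol(V_\sigg)$ converts the area term into the stated volume, the term $C(\sL_1+\sL_2)\sqrt{|\cn|^2-k_3^2}$ integrates to $O((\sL_1+\sL_2)|\cn|^2)$ (it is bounded by the area of a half-disk of radius $|\cn|$ up to the constant $\sL_1+\sL_2$), and the constant $C'$ integrates to $O(|\cn|)$. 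Combining with the earlier boundary term $C_{n,\sL_{1,2}}$ and summing over the four sign pairs absorbs everything into $C|\cn|^2(\sL_1\sL_2+\sL_1+\sL_2)$ up to lower-order terms in $|\cn|$, which can also be absorbed since $|\cn|\ge 1$ for $n\in\bZtno$ (so $|\cn|\lesssim |\cn|^2$). Finally, in the $\delta=0$ case, Lemma \ref{lem:2D:count} gives $Area(S(k_3))=0$ so $vol(V_\sigg)=0$ by Fubini, matching the convention stated in the theorem.

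No serious obstacle is anticipated; the only careful point is handling the finite set of $k_3$ where \eqref{ne0} fails, which is resolved by the measure-theoretic remark above, and checking that all error terms genuinely collapse into the single bound $C|\cn|^2(\sL_1\sL_2+\sL_1+\sL_2)$.
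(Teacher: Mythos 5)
Your argument is correct and is essentially the paper's own proof: it chains \eqref{N:le:V}, \eqref{3Dto2D} and Lemma \ref{lem:2D:count}, dismisses the finitely many $k_3$ where \eqref{ne0} fails as a Lebesgue-null set, and integrates the planar bound in $k_3$ exactly as the paper does. The only slip is the parenthetical claim that $|\cn|\ge 1$ for $n\in\bZtno$ --- this fails when $\sL_1$ or $\sL_2$ exceeds $1$ (e.g. $n=(1,0,0)$ gives $|\cn|=1/\sL_1$) --- but the absorption of the lower-order terms $(\sL_1+\sL_2)|\cn|$ and $O(|\cn|)$ into $C|\cn|^2(\sL_1\sL_2+\sL_1+\sL_2)$ is the same bookkeeping the paper itself leaves implicit, so this does not affect the substance of the argument.
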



\sectionb{Volume Integral Estimates}\label{S:vol}
Continuing from Theorem \ref{thm:int:vol}, we focus our attention on upper bound of the volume $vol(V_\sigg)$, and it suffices to consider $\delta>0$.  The reader may refer to \S\ref{ss:outline} for a bigger picture of this multi-stage proof with technical highlights including the involvement of group theory.
\begin{theorem}\label{thm:V}
Consider $V_\sigg$ given in \eqref{fpm:Car}, \eqref{sym2} for any bandwidth $\delta\in(0,\frac12)$, wavevector $n\ne0$ and sign choice $(\sigma_1,\sigma_2)\in\{+,-\}^2$.  Then 
\be\label{vol:est}
\frac{vol(V_\sigg)}{\sL_1\sL_2|\cn|^3}\lesssim  \delta+\delta\log^+\frac{1}{\delta+\frac{2|n_3|}{|\cn|}}\,,
\ee
where the missing constant factor in the $\lesssim$ notation is independent of $n,\delta,\sigma_1,\sigma_2,\sL_1,\sL_2$.
\end{theorem}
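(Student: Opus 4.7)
The plan is to reduce the three-dimensional volume integral to a one-dimensional integral in the azimuthal direction by exploiting the half-period monotonicity of the triplet value, then bound the resulting integral (which is of elliptic type with four $\tfrac{1}{\sqrt{\,\cdot\,}}$-singularities) by a case analysis relying on group-theoretic invariance.

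First, I would rewrite the volume in the oblong cylindrical coordinates $(r_k,\phi_k,k_3)$ already introduced in Section~\ref{s:planar}, picking up a factor of $\sL_1\sL_2$. The azimuthal symmetry $G(r_k,\phi_k)=G(r_k,2\phi_n-\phi_k)$ together with the half-period monotonicity established after \eqref{Fphi} allows me to restrict $\phi_k\in[\phi_n,\phi_n+\pi]$ and multiply by $2$; the degenerate cases $r_n=0$ and $m_3=0$ live on measure-zero subsets of $(r_k,k_3)$ and are handled directly, contributing at most a term of order $\delta\sL_1\sL_2|\cn|^3$.

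Next, on the half-period interval I would substitute $u=G(r_k,\phi_k)$, which is a bijection with Jacobian
\[
\Big|\tfrac{d\phi_k}{du}\Big|=\frac{|\cm|^3}{|m_3|\,r_n r_k\,|\sin(\phi_k-\phi_n)|}.
\]
The identity $G=u$ gives $|\cm|=|m_3|/|A|$ with $A(u,r_k,k_3):=u-\sigma_1\tfrac{n_3}{|\cn|}-\sigma_2\tfrac{k_3}{|\ck|}$, which combined with $|\cm|^2=r_k^2+2r_kr_n\cos(\phi_k-\phi_n)+r_n^2+m_3^2$ yields, setting $v=m_3/A$,
\[
\sin^2(\phi_k-\phi_n)=\frac{(B_+-v^2)(v^2-B_-)}{4r_k^2 r_n^2},\qquad B_\pm:=(r_k\pm r_n)^2+m_3^2.
\]
Further substituting $v$ for $u$ reveals the integrand as an elliptic kernel $\tfrac{\text{poly}(v)}{\sqrt{(B_+-v^2)(v^2-B_-)}}$ with four square-root singularities at $\pm\sqrt{B_\pm}$. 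The $|u|\le\delta$ constraint confines the $v$-integration to a $v$-interval whose length and position (relative to the four singularities) depends on $n,k_3,r_k$ through the auxiliary quantity $A$.

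The key one-dimensional estimate is then to bound this elliptic-type integral in terms of $\delta$ and the ordering/spacing of the four singular points. Since the value of such an integral is invariant under simultaneous double transpositions of the four singular points (the Klein four-group $K_4\lhd S_4$), only the cosets in $S_4/K_4\cong S_3$ matter; using an affine transformation that fixes one singularity, this is further collapsed to a handful of normalized cases, which is the content of Lemma~\ref{lem:sort} and the elliptic-integral bounds of Appendix~\ref{s:e:i}. The per-fiber bound will scale like $\delta$ in the generic regime and degrade to $\delta\log\tfrac{1}{\eta}$ when two singular points come within distance $\eta$ of one another.

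Finally, I would integrate the per-fiber bound over the annulus $\tfrac12|\cn|\le|\ck|\le|\cn|$ in the $(r_k,k_3)$ plane. I expect the main obstacle here to be isolating precisely when the coalescence of the singular points $\pm\sqrt{B_\pm}$ occurs: $B_+-B_-=4r_kr_n$ can be small when $r_n$ is small (i.e.\ $|n_3|$ close to $|\cn|$), but this is compensated because the corresponding $u$-range is also small; the genuinely delicate regime is when $m_3=-(k_3+n_3)$ is small on a set of non-negligible $k_3$-measure, which happens precisely when $|n_3|/|\cn|$ is small. Careful book-keeping of the degeneracy as a function of $k_3$, together with the observation that $\int\tfrac{dk_3}{\max\{|k_3+n_3|,\,\text{something}\}}$ produces a logarithm, is what gives rise to the $\log^+\tfrac{1}{\delta+2|n_3|/|\cn|}$ factor. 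Combining this with the generic $\delta$-contribution from the non-degenerate regime yields the asserted bound.
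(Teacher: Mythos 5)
Your change of variables is computed correctly and is close in spirit to the paper's first step (the paper trades the azimuth for $c_m$, which at a fixed fiber is just an affine shift of your $u$), but your organization keeps $(r_k,k_3)$ outermost and does the window integral in the azimuth fiber by fiber, and there the kernel actually collapses: one finds $\bigl|\frac{\rd\phi_k}{\rd u}\bigr|\,\bigl|\frac{\rd u}{\rd v}\bigr|=\frac{2v}{\sqrt{(B_+-v^2)(v^2-B_-)}}$, which in $w=v^2$ is a pure arcsine kernel. So in your coordinates there is no genuine four-point elliptic integral and no ordering ambiguity at all -- the singularities $\pm\sqrt{B_\pm}$ always sit in the fixed order $-\sqrt{B_+}<-\sqrt{B_-}<\sqrt{B_-}<\sqrt{B_+}$ -- hence the $K_4$/Lemma~\ref{lem:sort} machinery and the elliptic estimates of Appendix~\ref{s:e:i} that you invoke do no work for you. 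In the paper that machinery is genuinely needed because its inner integral is \emph{radial}, with the four roots $\Lambda^{\pm\pm}$ of the quartic \eqref{def:mq} occurring in many different orders; your route simply does not meet that difficulty, and instead pushes all the difficulty into the outer integration over the annulus.

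That outer step is where the genuine gap lies. Your claimed per-fiber dichotomy (order $\delta$ generically, degrading to $\delta\log\frac1\eta$ under coalescence of singular points) is structurally wrong: whenever the window $|u|\le\delta$ captures a critical value of $G$ (i.e.\ reaches $v^2=B_\pm$, that is $\phi_k\equiv\phi_n$ mod $\pi$), the $\phi_k$-measure of the fiber scales like $\sqrt{\delta}$, not $\delta$ or $\delta\log$ -- precisely the degeneracy highlighted by the $a=0$ example in the introduction's footnote -- so no uniform fiber-wise bound of your form can hold, and one must instead prove that the set of $(r_k,k_3)$ whose window meets an endpoint is correspondingly thin and integrate the resulting piecewise bounds. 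Your identification of the delicate regime is also inaccurate: $m_3=-(k_3+n_3)$ is small for $k_3$ near $-n_3$ regardless of $n$; the regime that actually produces the logarithm is where all three cosines $n_3/|\cn|$, $k_3/|\ck|$, $m_3/|\cm|$ are simultaneously small, and obtaining exactly $\delta\log^+\frac{1}{\delta+2|n_3|/|\cn|}$ (rather than a spurious $\sqrt\delta$ or a $\delta\log\frac1\delta$ for every $n$) requires quantitative separation estimates of the kind the paper establishes through the ordering case analysis ($\chi_1,\chi_2,\chi_3$), the sharpened bound \eqref{cI:sharp}, and the explicit evaluation of $\int \rd c_m/\sqrt{c_m^2-c_n^2}$ over the constrained range. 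Your sketch defers all of this to ``careful book-keeping,'' but that book-keeping is the substance of the theorem; as written, the proposal does not establish \eqref{vol:est}.
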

The proof is given in \S\ref{s:proof:V}.

Although the logarithmic factor in \eqref{vol:est} is for volume estimate in  {\it continuum}, it is actually  optimal  even for {\it integer point} counting in the sense detailed in Appendix \S\ref{S:op}. 

To streamline the proof, we first do some housekeeping.
 
Similar to last section's strategy of excluding some rare $\delta$ values from the main argument, it suffices to prove
 \eqref{vol:est} for $n$ in a dense set of $\bR^3$. Assume it indeed holds. For   any $n\ne0$ not in that dense set and any sufficiently small $\ep>0$, there exists  $n^{app}$ in that dense set, so that 
 \[
 \left|\dfrac{n^{app}_{3}}{|\cn^{app}|}-\dfrac{n_3}{|\cn|}\right|<\ep,
 \] so the corresponding $F^{app}_\sigg(k)$ differs from $F_\sigg(k)$ by less than $\ep$. Then, by the (temporary) assumption that the $n^{app},\delta+\ep$ version of \eqref{vol:est} holds, we find that the $n,\delta$ version of LHS \eqref{vol:est} is controlled by the $n^{app},\delta+\ep$ version of RHS \eqref{vol:est}, hence proving the original form of \eqref{vol:est}. 
 
 Henceforth for the rest of \S \ref{S:vol},  we assume
 \be\label{ws1}
n \text{ \, with \, } n_1n_2n_3\ne0\qua{and}\delta\in\big(0,\tfrac12\big)\text{ \, are fixed}.
 \ee

We will use $\chibr{\cdot}$ that takes a Boolean expression as the argument and gives numerical value 1 if the expression is true, and 0 otherwise. The composition of $\chibr{\cdot}$ with its argument is then a mapping from the space where the variables of the Boolean expression reside to $\{0,1\}$. Then, when  $\chibr{\cdot}$ takes inequalities of simple enough nature, the corresponding mapping is Lebesgue measurable. For example, $\chibr{x\le y}$ for $(x, y)\in\bR^2$ is Lebesgue measurable in $\bR^2$.

Recall that the domain-adjusted wavevector $\ck$ is indicated with an accent $\check{\;}$. 
So we rewrite the volume element in \, $\int_{\bR^3}\ind_{V_\sigg}(k)\,\rd k$ \, as \, $\rd k=\sL_1\sL_2\,\rd \ck$, followed by substituting the definitions \eqref{fpm:Car}, \eqref{sym2}, and dropping all accents. This result is: 
\be\label{vol:V}
\frac{vol(V_\sigg)}{\sL_1\sL_2}=\int_{\bR^3\backslash\{0,-n\}}\chibr{\Big|\sigma_1\frac{n_3}{|n|}+\sigma_2\frac{k_3}{|k|}-\frac{n_3+k_3}{|n+k|}\Big|\le\delta}\,\chibr{\tfrac12|n|\le|k|\le|n|}\,\rd k.
\ee
Here the only abuse of notation is renaming  $\cn$ as $n$. The value of the integral does not depend on the choice of $\ck$ or $k$ notation. Henceforth, we discontinue the use of accent $\check{\;}$.

\subsection{Change of coordinates, twice}\label{S5:1}
For estimating the measure of a sublevel set of a function (here, it is the triplet value as function of $k$), we can swap the roles of dependent variable, i.e. the function itself, with one of the coordinates (here, it is the azimuthal angle of $k$). In what follows, the new coordinate is basically $\frac{m_3}{|m|}$ which has a simple affine relation to the triplet value if we fix every quantity except the azimuthal angle of $k$.

The first step is therefore to represent wavevector $ k$ in  spherical coordinates but with rescaled radius $\lambda_k:=\frac{|k|}{|n|}$ i.e.  $\left(\lambda_k,  \theta_k, \phi_k \right)  \in [0, \infty) \times [0, \pi] \times \bT_{2\pi }$ satisfying
\[\begin{aligned}
& (k_1,k_2,k_3)=|n|\lambda_k (\sin\theta_k \cos \phi_k, \sin \theta_k \sin \phi_k, c_k),\quad
\text{\, with}\quad c_k:=\cos\theta_k.
\end{aligned}\]
Similarly notations are used for $n,m$. Note $\lambda_n=1$. 

We impose the following working assumptions until they are addressed above \eqref{V:est1}:
\be\label{w:a:1}
\lambda_k\in\big[\tfrac12,1\big),\qquad\theta_k\in\big(0,\tfrac12\pi\big)\cup\big(\tfrac12\pi,\pi\big)\qqua{and}\lambda_k c_k + c_n \ne 0.
\ee
Naturally, the convolution condition $m=-k-n$ is always assumed.

By $k\cdot n=k_3n_3 + k_H\cdot n_H=|k|\cdot|n|\big(c_k c_n +\sin \theta_k \sin \theta_n\cos (\phi_k - \phi_n)\big)$, we expand $|m|^2=|k+n|^2 $, effectively expressing $\lambda_m$ in terms of $k, n$,
\be\label{lambda_m}
 \begin{aligned}
\lambda_m=\sqrt{ \lambda_k^2 + 1 + 2 \lambda_k  c_k c_n+ 2 \lambda_k  \sin \theta_k \sin  \theta_ n  \cos (\phi_k - \phi_n)  }\;\in\,(0,\lambda_k+1],
\end{aligned} 
\ee
where $\lambda_m>0$ is by the last part of \eqref{w:a:1}, i.e. $m_3\ne0$, and the upper bound due to $|m|\le|k|+|n|$. 

Next,
 define mapping $\scm: (0, \infty) \times [0, \pi] \times \bT_{2\pi }\mapsto \bR$  as
\be\label{c_m:fun}
  \scm(  \lambda_k,\theta_k,\phi_k)  :=  -\frac{ \lambda_k c_k + c_n }{\lambda_m}\;\;\;\left(=-\frac{n_3+k_3}{|n+k|}\,\right).
\ee
Define mapping $\mq:(0, \infty) \times [0, \pi] \times [-1,1]\mapsto \bR$ (that is ``quartic'' in either $\lambda_k, c_k$ or $c_m$) as:
\be\label{def:mq}
\mq(\lambda_k, \theta_k, c_m):=\left[2\lambda_k \sin \theta_k \sin\theta_n  \, c_m^2 \right]^2 - \left[   \left( \lambda_k^2 + 1 + 2 \lambda_k c_n c_k \right)c_m^2 - \left( \lambda_k c_k + c_n \right)^2\, \right]^2.
\ee
Then substitute \eqref{lambda_m} into \eqref{c_m:fun} and rearrange using $\sin^2 (\phi_k - \phi_n )+\cos^2 (\phi_k - \phi_n )=1$ to deduce:
\begin{equation}\label{variable2}
\left(\mq\big(\lambda_k,\theta_k, c_m \big)- \left[ 2\lambda_k \sin \theta_k \sin \theta_n  \sin (\phi_k - \phi_n )\, {c_m^2} \right]^2\,\right)\,\bigg|_{\text{\normalsize$c_m=\scm(  \lambda_k,\theta_k,\phi_k)$}}=0. 
\end{equation}

By ${\partial_{\phi_k}}  \scm(  \lambda_k,\theta_k,\phi_k)  = \lambda_k \sin \theta_k \sin \theta_n\sin( \phi_k - \phi_n)\lambda_m^{-3}(\lambda_k c_k + c_n)$ and under  \eqref{ws1}, \eqref{w:a:1} with fixed $\lambda_k,\theta_k$, we find the sign of $\scm\ne0$ is fixed, and the sign of ${\partial_{\phi_k}}\scm\ne0$ is fixed  for $\phi_k\in(\phi_n-\pi,\phi_n)$ and for $\phi_k\in(\phi_n-\pi,\phi_n)$, so $\scm$ is monotonic in $\phi_k$ in each interval, prompting the next change of coordinates. 
The Jacobian will be the reciprocal of $ {\partial_{\phi_k}}\scm $ as expressed above. We rewrite the Jacobian using \eqref{variable2} and $(\lambda_k c_k + c_n)=-\scm(  \lambda_k,\theta_k,\phi_k)\lambda_m$ due to \eqref{c_m:fun},
\[
\dfrac1{\big|\partial_{\phi_k}  \scm(  \lambda_k,\theta_k,\phi_k) \big|}=\frac{ 2 \lambda_m^{2} \, |c_m| }{ \sqrt{\mq(  \lambda_k,\theta_k, c_m)} }\,\Bigg|_{\text{\normalsize$c_m=\scm(  \lambda_k,\theta_k,\phi_k)$}}<\infty\,,\qua{ for}\phi_k\not\equiv\phi_n\,(\text{mod\,}\pi). 
\] 
By \eqref{lambda_m}, relax $\lambda_m^2$ to $(\lambda_k+1)^2$ in above so that the only $\phi_k$ dependence is via $c_m$, the upcoming new coordinate. 
Then, at fixed $\lambda_k,\theta_k$ coordinates subject to \eqref{w:a:1}, we change the $\phi_k$ coordinate to $c_m$ via $c_m=\scm(  \lambda_k,\theta_k,\phi_k)$ in the following Lebesgue  integral,
\begin{align*}
 & \qquad \int_{\phi_k\in \bT_{2\pi}\backslash\{\phi_n,\phi_n+\pi\}}\chibrB{\big|\sigma_1c_n+\sigma_2c_k+\scm(  \lambda_k,\theta_k,\phi_k)\big|\le\delta} \,\rd\phi_k\\[1mm]
\le \,&2\int_{\min\{\scm(\lambda_k, c_k,\phi_n),\,\scm(\lambda_k, c_k,\phi_n+\pi)\}}^{\max\{\scm(\lambda_k, c_k,\phi_n),\,\scm(\lambda_k, c_k,\phi_n+\pi)\}}\,\chibrB{|\sigma_1c_n+\sigma_2c_k+c_m|\le\delta}\,\frac{ 2 (\lambda_k+1)^2\, |c_m| }{ \sqrt{\mq(  \lambda_k,\theta_k, c_m)} }\,\rd c_m.\nonumber
\end{align*}
Now, the above information on the signs of ${\partial_{\phi_k}}\scm$ and $\scm$ ensures that any $c_m$ in the right-hand side integral domain {\it excluding endpoints} corresponds to a $\phi_k\in (\phi_n,\phi_n+\pi)$ so that $c_m=\scm(\lambda_k, c_k,\phi_k)\ne0$. This has two implications. First $|c_m|=|\scm(\lambda_k, c_k,\phi_k)|\le1$ by \eqref{c_m:fun} and simple geometry. Second,  $\mq(\lambda_k,\theta_k, c_m)>0$ by \eqref{variable2}, \eqref{ws1}, \eqref{w:a:1}. Therefore, we define
\be\label{def:mqs}
\mq^*(  \lambda_k,\theta_k, c_m)=\begin{cases}
\frac1{\sqrt{\mq(  \lambda_k,\theta_k, c_m)}},&\text{ if }\mq(  \lambda_k,\theta_k, c_m)>0,\\
0,&\text{ otherwise},
\end{cases}
\ee
and continue from the previous estimate, 
\begin{align*}
&\quad\int_{\phi_k\in \bT_{2\pi}\backslash\{\phi_n,\phi_n+\pi\}}\chibr{\big|\sigma_1c_n+\sigma_2c_k+\scm(  \lambda_k,\theta_k,\phi_k)\big|\le\delta} \,\rd\phi_k\\
&\le 4(\lambda_k+1)^2\int_{-1}^{1} {\chibr{|\sigma_1c_n+\sigma_2c_k+c_m|\le\delta}}\,\mq^*(  \lambda_k,\theta_k, c_m) \, |c_m| \,\rd c_m\,.
\end{align*}
Recall this estimate holds under assumption \eqref{w:a:1}, which allows us to  apply double integrals
\[
\int_{(0,\frac12\pi)\cup(\frac12\pi,\pi)}\int_{ [\frac12,1)\backslash\{-\frac{c_n}{c_k}\}}\,\sin\theta_k\,\rd\lambda_k \rd\theta_k.
\]
  Compare the LHS of the result (where $\scm$ satisfies \eqref{c_m:fun}) with the RHS of \eqref{vol:V} to find that both have the same integrand  and, when we switch back to un-scaled $|k|=|n|\lambda_k$, the two integral domains differ by a zero-measure set in $\bR^3$. Therefore  
\[
\frac{vol(V_\sigg)}{\sL_1\sL_2|n|^3}\le 16 \int_0^{\pi}\int_{\frac12}^{1}\int_{-1}^{1} {\chibr{|\sigma_1c_n+\sigma_2c_k+c_m|\le\delta}}\,
\mq^*(  \lambda_k,\theta_k, c_m)\,|c_m|\,\sin\theta_k\,\rd c_m\,\rd\lambda_k\, \rd\theta_k,
\]
where we also filled back a zero-measure set in the RHS integral domain. 
Then  by the virtue of Fubini-Tonelli theorem we have
\be\label{V:est1}
\frac{vol(V_\sigg)}{\sL_1\sL_2|n|^3}\le 16  \int_{-1}^{1} \int_0^\pi {\chibr{|\sigma_1c_n+\sigma_2c_k+c_m|\le\delta}} \, \mQ(\theta_k, c_m)\,\sin\theta_k\,\rd\theta_k\,\rd c_m
\ee
where
\be\label{def:mQ}
\mQ(\theta_k, c_m):=|c_m| \,\int_{\frac12}^{1}\mq^*(  \lambda_k,\theta_k, c_m)\,\rd\lambda_k.
\ee

\subsection{Elliptic integrals}\label{S:ei}

Recall \eqref{ws1} so that $c_n$ is fixed and
 $0<|c_n|<1$.
  Noting 
 the coordinates of \eqref{V:est1} are $\lambda_k, c_m,\theta_k$ (equivalently $c_k$),    we impose for the rest of \S \ref{S:ei} the following working assumptions (they are addressed below \eqref{Q:le:sharp}):
\be\label{no:S2}
c_n c_k c_m(1-c_n^2)(1-c_k^2)(1-c_m^2)(c_m^2- c_k^2)(c_k^2-c_n^2)(c_n^2-c_m^2)  \ne 0.
\ee  

To estimate  $\mQ$   with its integrand $\mq^*$ defined in \eqref{def:mqs} having singularities caused by the $\lambda_k$ roots of $\mq$ defined in \eqref{def:mq}, we appeal to the tool of elliptic integrals.  Start with   factorisation  
\be\label{q:cm2}
\begin{aligned}
{\mq}=-&\left[ ( c_m^2  - c_k^2)\lambda_k^2 + 2(    \cos (\theta_k + \theta_n)  c_m^2 - c_n c_k ) \lambda_k +  c_m^2  - c_n^2 \right]\\
 \times &\left[ (  c_m^2  - c_k^2  )\lambda_k^2 + 2(    \cos (\theta_k - \theta_n)  c_m^2 - c_n c_k  ) \lambda_k +  c_m^2  - c_n^2 \right],
\end{aligned}
\ee
in which the discriminants of the two quadratic-in-$\lambda_k$  factors  are, via trigonometry, 
 \[
  4   \sin^2 ( {\theta_k \pm \theta_n}  )  (1-c_m^2 )c_m^2 \ge  0.
 \]
Thus all possible $\lambda_k$ values that make $\mq(\lambda_k,\theta_k, c_m)=0$ are
\[
\dfrac{ c_n c_k -  \cos \left( {\theta_k +\wt\sigma \theta_n} \right) c_m^2+ \wt{\wt\sigma}  \sin \left( {\theta_k +\wt\sigma \theta_n}\right)\sin (\theta_m)\, c_m} {  c_m^2  - c_k^2} ,\quad\text{ for }\,\wt\sigma=\pm1,\; \wt{\wt\sigma}=\pm1.
\]
Since the numerator equals $c_n c_k -\cos ( \theta_k+\wt\sigma\theta_n+\wt{\wt\sigma}\theta_m)c_m$, we denote all $\lambda_k$-zeros of $\mq$ as
\begin{align}
\Lambda^{\sigma_k,\sigma_n} := \dfrac{ c_n c_k -  \cC^{\sigma_k,\sigma_n}c_m } {  c_m^2  - c_k^2} &\qquad\text{with \; } \cC^{\sigma_k,\sigma_n} := \cos \left( \theta_m + \sigma_k \theta_k +\sigma_n \theta_n \right) \label{L:C}\\
&\qquad \text{for \;}  \sigma_k=\pm1,\; \sigma_n=\pm1.\nonumber
\end{align}
Note $\wt\sigma, \wt{\wt\sigma},\sigma_k,\sigma_n$ are independent of $\sigma_1,\sigma_2$ in the NR conditions.

By elementary trigonometric identity
\be\label{e:id}
\cos^2\!\beta-\cos^2\gamma=-\sin(\beta+\gamma)\sin(\beta-\gamma)\,,
\ee
 we go through more elementary trigonometry\footnote{\,Rewrite the numerator of \eqref{L:C} as: $\frac12\cos\left( \sigma_k \theta_k +\sigma_n \theta_n \right) +\frac12\cos\left( \sigma_k \theta_k -\sigma_n \theta_n \right) -\frac12\cos\left( 2\theta_m + \sigma_k \theta_k +\sigma_n \theta_n \right) -\frac12\cos\left(  \sigma_k \theta_k +\sigma_n \theta_n \right) =\sin\left( \theta_m + \sigma_k \theta_k \right)\sin\left( \theta_m +\sigma_n \theta_n \right)  $, and the denominator  as $\sin(\theta_m+\sigma_k\theta_k)\sin(\theta_m-\sigma_k\theta_n)$.} to find
\be\label{L:s}
\Lambda^{\sigma_k,\sigma_n} =
-\frac{\sin(\theta_m+\sigma_n\theta_n)}{\sin(\theta_m-\sigma_k\theta_k)}.
\ee
Interested reader can explore the geometry of the above right-hand side with the help of \eqref{variable2} which shows that $\mq=0$ is linked to $\phi_k\equiv\phi_n\,(\text{mod\,}\pi)$.

In applying the tool of elliptic integral, we rely on the following notations.
For  $(a, b, c, d)\in \bR^4$, define products of interlaced (``IL''),   enclosed (``EC'')  and separated (``SP'') differences
\begin{align*}
\IL(a, b, c, d)&:=(a-c)(b-d),\\
 \EC(a, b, c, d)&:=(a-d)(b-c), \\
\SP(a, b, c, d)&:=(a-b)(c-d).
\end{align*}
They satisfy identity
\be\label{sum:id}
\IL=\EC+\SP.
\ee
Also, they are invariant under two-pair swapping permutations, known as ``double transpositions'', which consist  three members: interlaced ($a\leftrightarrow c$, $b\leftrightarrow d$),  enclosed ($a\leftrightarrow d$, $b\leftrightarrow c$), and separated ($a\leftrightarrow b$, $c\leftrightarrow d$). We call this Klein 4-group symmetry\footnote{\label{fn:4-group}\,Together with identity, they are the only self-inverse, parity-preserving members of the 4-permutation group, and form a  subgroup. Then, necessarily, they are commutative and the three non-identity elements are cyclic under the group operation.   This subgroup  is isomorphic to the Klein 4-group and to $\mathbb Z_2 \times \mathbb Z_2$.} or ``$K_4$-symmetry'' for short.

For {\it ordered} real numbers $a>b>c>d$,  introduce   parameters (e.g. \cite[p8, p102]{e:int})
\begin{align}
\label{def:sk}\sk^2(a, b, c, d) &:= \dfrac{ (a-b)(c-d)}{ ( a-c) (b-d )} =\dfrac{\SP(a, b, c, d)}{\IL(a, b, c, d)}<1,\\
\label{def:sg}\sg(a, b, c, d)&:= \dfrac 2 { \sqrt { (a-c) (b-d) } }=\dfrac 2 {\sqrt{\IL(a, b, c, d)}}, 
\end{align}
with $\sk^2<1$  due to the rearrangement inequality (or \eqref{sum:id}). 
The incomplete  elliptic integral of the first kind for any $0\le \sk<1$ is defined as
\[
  \int_0^ { \Psi} \dfrac 1 { \sqrt{ 1- \sk^2 \sin^2 x}} \, \rd x 
=
\int_0^{ \sin{ \Psi} } \dfrac 1 { \sqrt{ (1- t^2)(1-\sk^2 t^2)}} \,\rd t\,,\qquad 0\le\Psi\le\frac\pi2\,.
\]  
Its relation to $\mQ$ is given in Proposition \ref{prop:ell} with the quartet of singular points being  $\Lambda^{\pm,\pm}$ from \eqref{L:C} rearranged in {\it descending order}. 
 This would pose a lengthy task of tracking $4!=24$ cases, not to mention the inconvenient form of $\Lambda^{\pm,\pm}$. However, we can make great simplification thanks to  affine relation in \eqref{L:C}, and the  following two elegant correspondences.
  
\begin{lemma}[{\bf First Correspondence}]\label{lem:prod}
 For any 4-permutation $\bpi$, the following identity holds, 
 \[
 \Pi\big(\bpi(\cC^{++},\cC^{+-},\cC^{-+},\cC^{--})\big) = 4\,\Pi\big(\bpi(1,c_n^2, c_k^2, c_m^2) \big),\qua{\textnormal{ for}}\Pi\in\left\{\IL,\EC,\SP\right\}.
 \]
 Thus, under working assumptions \eqref{no:S2}, all  $\cC^{\pm,\pm}$ are distinct, and all $\Lambda^{\pm,\pm}$ are distinct.
\end{lemma}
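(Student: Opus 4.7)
The plan is to verify the identity directly at $\bpi=\mathrm{id}$ for each of $\Pi\in\{\SP,\EC,\IL\}$ and then transport it to arbitrary $\bpi$ by the Klein 4-group symmetry of those three products. Applying the sum-to-product formula $\cos A-\cos B=-2\sin\tfrac{A+B}{2}\sin\tfrac{A-B}{2}$ to the definition \eqref{L:C} of $\cC^{\sigma_k,\sigma_n}$ turns each of the six pairwise differences into a product of two sines; for instance
\[
\cC^{++}-\cC^{+-}=-2\sin(\theta_m+\theta_k)\sin\theta_n,\qquad \cC^{-+}-\cC^{--}=-2\sin(\theta_m-\theta_k)\sin\theta_n,
\]
and the remaining four differences have the same shape with the roles of $\theta_n,\theta_k,\theta_m$ permuted. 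Multiplying the three pairs of complementary differences that realise $\SP$, $\EC$, $\IL$ at $\bpi=\mathrm{id}$ gives expressions of the form $4\sin^2\theta_{*}\,\sin(\theta_{**}+\theta_{***})\sin(\theta_{**}-\theta_{***})$ with $\{*,**,***\}=\{n,k,m\}$. Identity \eqref{e:id} then converts each such expression into $4(1-c_{*}^2)(c_{**}^2-c_{***}^2)$, which is precisely $4$ times the corresponding product of differences of $(1,c_n^2,c_k^2,c_m^2)$ in the same pairing.

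To pass from $\bpi=\mathrm{id}$ to arbitrary $\bpi\in S_4$, I would use the fact, noted in footnote \ref{fn:4-group}, that $\IL$, $\EC$, $\SP$ are invariant under the Klein 4-subgroup of double transpositions. Consequently only the coset $\bpi K_4\in S_4/K_4\cong S_3$ is relevant, and such a coset rewrites $\Pi\circ\bpi$ applied to any quadruple $(x_1,x_2,x_3,x_4)$ as $\epsilon\,\Pi'(x_1,x_2,x_3,x_4)$ for some $\epsilon\in\{\pm 1\}$ and $\Pi'\in\{\IL,\EC,\SP\}$ determined by $\bpi$ and $\Pi$ alone, not by the values of $x_i$. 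Since this bookkeeping is identical on both sides of the claimed identity, the three verifications at $\bpi=\mathrm{id}$ yield all $24$ cases of the general statement.

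For the distinctness conclusion, the three computed identities at $\bpi=\mathrm{id}$ read
\[
\SP=4(1-c_n^2)(c_k^2-c_m^2),\quad \EC=4(1-c_m^2)(c_n^2-c_k^2),\quad \IL=4(1-c_k^2)(c_n^2-c_m^2),
\]
each nonzero under \eqref{no:S2}. The three pairings together cover every one of the six pairs in $\{\cC^{++},\cC^{+-},\cC^{-+},\cC^{--}\}$, and since none of the six factors can vanish, no two of these four values coincide. Finally, \eqref{L:C} exhibits $\Lambda^{\sigma_k,\sigma_n}$ as an affine function of $\cC^{\sigma_k,\sigma_n}$ with slope $-c_m/(c_m^2-c_k^2)$, which is nonzero because $c_m\ne 0$ and $c_m^2\ne c_k^2$ from \eqref{no:S2}; distinctness therefore transfers from the $\cC^{\pm,\pm}$ to the $\Lambda^{\pm,\pm}$. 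The only step that requires real care is matching pairings with products of differences in the symmetry reduction; the rest is routine trigonometry.
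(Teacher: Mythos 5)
Your proof is correct and follows essentially the same route as the paper: verify the identity at $\bpi=\mathrm{id}$ for the three products by elementary trigonometry (the paper packages this as a single identity for $\SP$ of the $\cD$'s applied with the angles relabelled, whereas you compute the pairwise differences and the three products directly), then transport to general $\bpi$ by noting that composition with any permutation sends each of $\IL,\EC,\SP$ to $\pm$ one of the three in a way depending only on $\bpi$ and $\Pi$, hence identically on both sides. Your explicit deduction of the distinctness of the $\cC^{\pm,\pm}$ from the nonvanishing of the three products under \eqref{no:S2}, and of the $\Lambda^{\pm,\pm}$ via the nonzero-slope affine relation \eqref{L:C}, matches what the paper leaves implicit.
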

\begin{proof}
 Let us start with the $\bpi=id$ version. Define mapping $\cD^{\,\sigma^*,\,\sigma^{**}}:{  [0,\pi]^3\mapsto[-1,1]}$ as
 \[
 \cD^{\sigma^*,\,\sigma^{**}}(\theta_1,\theta_2,\theta_3) := \cos \left( \theta_1 + \sigma^* \theta_2 +\sigma^{**} \theta_3 \right)\,
 \]
 so that 
 \[
 \cD^{\sigma_k,\sigma_n}(\theta_m,\theta_k,\theta_n)=\cC^{\sigma_k,\sigma_n}.
 \] Then, the case for $\Pi=\SP$ and $\bpi=id$ is due to the elementary identity
 \[
\SP\big((\cD^{++},\cD^{+-},\cD^{-+},\cD^{--})(\theta_1, \theta_2, \theta_3)\big)=4\,\SP\big(1,\cos^2\theta_3,\cos^2\theta_2,\cos^2\theta_1\big),
\]
for $(\theta_1,\theta_2,\theta_3)=(\theta_m,\theta_k,\theta_n)$.
With $(\theta_1,\theta_2,\theta_3)=(\theta_m,\theta_n,\theta_k)$, we find $(\cD^{++},\cD^{+-},\cD^{-+},\cD^{--})$ $(\theta_m, \theta_n, \theta_k)=(\cC^{++},\cC^{-+},\cC^{+-},\cC^{--})$, so the above identity also proves the case for $\Pi=\IL$ and $\bpi=id$. The case for $\Pi=\EC$ and $\bpi=id$ is proven similarly by combining the above identity and $(\cD^{++},\cD^{+-},\cD^{-+},\cD^{--})(\theta_n, \theta_k, \theta_m)=(\cC^{++},\cC^{--},\cC^{-+},\cC^{+-})$.

For cases with general permutations $\bpi$, we just combine the proven case for with $\bpi=id$ with the fact that the composition $\Pi\circ\bpi$ changes the type of $\Pi$ amongst $\IL,\EC,\SP$  and/or changes the sign in the same fashion for both sides of the claimed identity.
\end{proof}

Let $\vec{\cC}_s$ denote the vector resulting from sorting $(\cC^{++},\cC^{+-},\cC^{-+},\cC^{--})$ in descending order, and Let $\vec{\varsigma}_s$ denote the vector resulting from sorting $(1,c_n^2, c_k^2, c_m^2)$ in descending order.

\begin{lemma}[{\bf Second Correspondence}]\label{lem:sort}
For any $\Pi\in\left\{\IL,\EC,\SP\right\}$, we have
\[
 \Pi\big(\vec\cC_s\big) = 4\,\Pi\big(\vec\varsigma_s \big).
 \]
\end{lemma}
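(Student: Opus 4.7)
The plan is to use Lemma~\ref{lem:prod} together with the $K_4$-invariance of each $\Pi\in\{\IL,\EC,\SP\}$: the identity will follow once I show that the sorting permutation for the $\cC$'s and the sorting permutation for the squares lie in a common coset of $K_4$ in $S_4$. Let $\bpi_1\in S_4$ be the permutation such that $\vec\cC_s=\bpi_1\cdot(\cC^{++},\cC^{+-},\cC^{-+},\cC^{--})$, and $\bpi_2$ the analogous one with $\vec\varsigma_s=\bpi_2\cdot(1,c_n^2,c_k^2,c_m^2)$. Both are well-defined since Lemma~\ref{lem:prod} makes the four $\cC^{\pm,\pm}$ distinct and the working assumption \eqref{no:S2} makes the four squares distinct. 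Applying Lemma~\ref{lem:prod} with $\bpi=\bpi_1$ already gives $\Pi(\vec\cC_s)=4\,\Pi\bigl(\bpi_1\cdot(1,c_n^2,c_k^2,c_m^2)\bigr)$, so it remains to verify $\Pi\bigl(\bpi_1\cdot(1,c_n^2,c_k^2,c_m^2)\bigr)=\Pi(\vec\varsigma_s)$; by the $K_4$-invariance of each $\Pi$, this holds once $\bpi_1\bpi_2^{-1}\in K_4$.

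To pin down the coset, I attach to any 4-tuple $x$ of distinct reals the \emph{signature} $\sigma(x):=(\mathrm{sgn}\,\IL(x),\,\mathrm{sgn}\,\EC(x),\,\mathrm{sgn}\,\SP(x))\in\{+,-\}^3$. Because each $\Pi$ is $K_4$-invariant, $\sigma$ descends to a map on $S_4/K_4$ via the action on a reference sorted-descending tuple $(y_1>y_2>y_3>y_4)$. The identity \eqref{sum:id}, $\IL=\EC+\SP$, rules out the two patterns $(-,+,+)$ and $(+,-,-)$, leaving exactly $2^3-2=6$ admissible sign patterns, matching the cardinality $|S_4/K_4|=6$. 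A short case-check across canonical coset representatives (for instance $\{id,(12),(13),(14),(123),(132)\}$) will show that the six admissible patterns are realised bijectively, so $\sigma$ is a bijection from $S_4/K_4$ onto the admissible sign-pattern set.

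To close the loop, the descending-sorted tuples $\vec\cC_s$ and $\vec\varsigma_s$ both carry signature $(+,+,+)$, since $a>b>c>d$ makes every difference factor appearing in $\IL,\EC,\SP$ positive. The identity from the first paragraph is an equality of $\Pi$ values and therefore preserves signs, so $\sigma\bigl(\bpi_1\cdot(1,c_n^2,c_k^2,c_m^2)\bigr)=(+,+,+)=\sigma\bigl(\bpi_2\cdot(1,c_n^2,c_k^2,c_m^2)\bigr)$; the bijectivity of $\sigma$ then forces $\bpi_1K_4=\bpi_2K_4$, which finishes the argument. The main technical hurdle is precisely this bijectivity of $\sigma$ on $S_4/K_4$; this realises the outline's hint that, once the $6$-element quotient $S_4/K_4\cong S_3$ has reduced the 4-element sorting problem to a $3$-element one, everything else amounts to a direct application of Lemma~\ref{lem:prod} and $K_4$-invariance.
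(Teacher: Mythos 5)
Your proposal is correct and follows essentially the same route as the paper: two applications of Lemma \ref{lem:prod} at well-chosen permutations, the $K_4$-invariance of $\IL,\EC,\SP$, and the fact that the sign triple $(\mathrm{sgn}\,\IL,\mathrm{sgn}\,\EC,\mathrm{sgn}\,\SP)$ determines the coset in $S_4/K_4$, with \eqref{sum:id} excluding exactly two of the eight patterns. The ``bijectivity of $\sigma$'' that you flag as the main hurdle is precisely the content of the paper's Table \ref{table:sort}, and your proposed case-check over the six representatives fixing the largest entry does verify it.
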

\begin{proof}

For  $\vec v=(a, b, c, d)$ with mutually distinct real numbers $a, b, c, d$, define $\bpi_{\vec v}$ to be the unique  permutation  that sorts $\vec v$ in descending order, namely 
\be\label{def:s:p}\vec v_{s}=\bpi_{\vec v}(\vec v).\ee  

Let $K_4$ denote the Klein 4-group and $S_4$ the 4-permutation group. When  $K_4$ is viewed as a subgroup of $S_4$, it consists of identity and the double transpositions given below \eqref{sum:id}. By group theory, $K_4$ is a normal subgroup of $S_4$, so the quotient group $S_4/K_4$ exists.

Now, take any 4-permutations $\bpi_1,\bpi_2$ so that $\bpi_2\in K_4\bpi_1 $  with coset $K_4\bpi_1 \in S_4/K_4$. By  the $K_4$-symmetry below \eqref{sum:id}, the three signs of $\EC(\cdot),\SP(\cdot),\IL(\cdot)$ acting on $\bpi_1\vec v$ and $\bpi_2 \vec v$ are identical. It turns out the reverse is also true: for a given $\vec v$, the three signs of $\EC(\vec v),\SP(\vec v),\IL(\vec v)$ suffice to uniquely determine the element of $S_4/K_4$ that contains   $\bpi_{\vec v}$.  In particular $\bpi_{\vec v}\in K_4\bpi^*$ with $\bpi^*$ being the ``special representative'' of the coset  that holds the first element fixed, known as a stabiliser. (In fact, choosing stabilisers as representatives for the cosets is  possible due to the isomorphism between $S_4/K_4$ and $S_3$.) See Table \ref{table:sort}. 
  
Let  $\vec\cC=(\cC^{++},\cC^{+-},\cC^{-+},\cC^{--})$  and   $\vec{\varsigma} = (1,c_n^2, c_k^2, c_m^2)$ so that  $\vec\cC_{s} = \bpi_{\vec\cC}\,(\vec\cC)$ and $\vec\varsigma_{s} = \bpi_{\vec\varsigma}\,(\vec\varsigma)$ by notation \eqref{def:s:p}. Using the one-to-one relation proven above and the $\bpi=id$ version of Lemma \ref{lem:prod},  we prove
  $\bpi_{\vec\cC}\in K_4\bpi_{\vec\varsigma}$. 
  Combine it  with the $K_4$-symmetry of the $\Pi$'s and  the $\bpi=\bpi_{\vec\varsigma}$ version of Lemma \ref{lem:prod} to complete the proof.
\end{proof}

{\small  \begin{table}[h]
\centering
\textrm{\textit{\begin{tabular}{c|c|c|c}
\hline
\rule{0pt}{5mm}
$\EC(\vec v)$&$\SP(\vec v)$&$\IL(\vec v)$& $\bpi_{\vec v}\in K_4\bpi^*$ where $\bpi^*=\ldots$\\[1mm]
\hline
\rule{0pt}{5mm}
$+$&$+$&$+$&$(a),b, c, d$\\[1mm]
$-$&$-$&$-$&$(a),d, c, b$\\[1mm]
$+$&$-$&$-$&$(a),d, b, c$\\[1mm]
$-$&$+$&$+$&$(a),c, b, d$\\[1mm]
$-$&$+$&$-$&$(a),c, d, b$\\[1mm]
$+$&$-$&$+$&$(a),b, d, c$\\[1mm]
$+$&$+$&$-$&impossible\\[1mm]
$-$&$-$&$+$&impossible\\[1mm]
\hline
\end{tabular}}}
\bigskip

\caption{\small All possible descending order permutations of $\vec v=(a, b, c, d)$ are divided into 6 cosets, distinguishable by the three signs $\vec v$ produces on the left columns. The ``special representative'' $\bpi^*$ of each coset   holds the first  element $(a)$ fixed. In other words, $a$ is always the largest entry in $\vec v$. Thus the signs of $\EC(\vec v),\SP(\vec v),\IL(\vec v)$ are simply computed as  the signs of $(b-c),(c-d),(b-d)$.}\label{table:sort}\vspace{-5mm}
\end{table}}

Let  $\vec{\Lambda}_s$ denote the vector resulting from sorting $(\Lambda^{++},\Lambda^{--},\Lambda^{+-},\Lambda^{-+})$ in descending order. 
Since    $\Lambda^{\pm,\pm}$   and $\cC^{\pm,\pm}$ are linked by  an affine transform as in \eqref{L:C}, we  recall definitions \eqref{def:sk}, \eqref{def:sg} 
and apply Lemma \ref{lem:sort} to obtain
\be\label{sg:so}
\sg(\vec{\Lambda}_s) =\dfrac{ \left| c^2_m - c^2_k \right|}{ |c_m| } \, \frac1{ \sqrt{ \IL(\vec{\varsigma}_{s})  }}
\ee
and  (note \eqref{sum:id})
\be\label{sk:so}
 1-\sk^2(\vec{\Lambda}_s) =\frac { \EC(\vec{\varsigma_s})} {   \IL(\vec{\varsigma_s}) }.
\ee
Then, noting the first entry of $\vec\varsigma_s$ is always 1,
we have reduced the bookkeeping task of $4!=24$ cases to $3!=6$ cases that account the ordering of $c_n^2,c_k^2,c_m^2$. In fact, we show   in the argument below \eqref{Q:le:sharp} that there are  only 2 essentially distinct scenarios. But first, let us estimate $\mQ$. 

\begin{lemma}
\label{lem:Q:est}
Consider $\mQ(\theta_k,c_m)$ from \eqref{def:mQ}. Under working assumptions \eqref{no:S2}, we have
\begin{align}
\label{cI:lesssim} \mQ & \lesssim \frac{1+\log\sqrt{ { \IL(\vec{\varsigma_s})}/ {   \EC(\vec{\varsigma_s}) }}}{\sqrt{\IL(\vec\varsigma_{s})}},
\end{align}
and a (conditional) sharper estimate:
\be\label{cI:sharp}
|c_n|<\min\{|c_k|,|c_m|\}\quad\implies\quad\mQ \lesssim \frac{1 }{\sqrt{\IL(\vec\varsigma_{s})}}.
\ee
\end{lemma}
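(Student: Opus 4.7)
The plan is to recognise $\int\mq^{*}\,\rd\lambda_k$ as (part of) an elliptic integral of the first kind by exploiting the factorisation \eqref{q:cm2} and the explicit roots \eqref{L:C}. Sort $\Lambda^{\pm,\pm}$ in descending order as $\vec\Lambda_s=(v_1,v_2,v_3,v_4)$. Since $\mq$ is a quartic in $\lambda_k$ with leading coefficient $-(c_m^{2}-c_k^{2})^{2}$, a sign analysis shows $\mq>0$ precisely on the two bumps $(v_4,v_3)\cup(v_2,v_1)$, and the standard reduction to canonical form (to be proved in Appendix \S\ref{s:e:i}) yields
\[
\int_{v_2}^{v_1}\!\frac{\rd\lambda_k}{\sqrt{\mq}}\;=\;\int_{v_4}^{v_3}\!\frac{\rd\lambda_k}{\sqrt{\mq}}\;=\;\frac{\sg(\vec\Lambda_s)\,K(\sk(\vec\Lambda_s))}{|c_m^{2}-c_k^{2}|},
\]
where $K$ denotes the complete elliptic integral of the first kind and $\sg,\sk$ are as in \eqref{def:sk}--\eqref{def:sg}.

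For \eqref{cI:lesssim}, I would dominate the $\lambda_k\in[1/2,1]$ integral by the complete one over the full support $\{\mq>0\}$ and apply the first correspondence \eqref{sg:so} to get
\[
\mQ(\theta_k,c_m)\le\frac{2|c_m|\sg(\vec\Lambda_s)\,K(\sk)}{|c_m^{2}-c_k^{2}|}\;=\;\frac{2K(\sk(\vec\Lambda_s))}{\sqrt{\IL(\vec\varsigma_s)}}.
\]
Combining this with the classical logarithmic bound $K(\sk)\lesssim 1+\log\!\bigl(1/\sqrt{1-\sk^{2}}\bigr)$ (also to be proved in Appendix \S\ref{s:e:i}) and with the identity $1-\sk^{2}=\EC(\vec\varsigma_s)/\IL(\vec\varsigma_s)$ from \eqref{sk:so} finishes the general estimate.

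For \eqref{cI:sharp}, the hypothesis $|c_n|<\min\{|c_k|,|c_m|\}$ forces $\vec\varsigma_s=(1,\max\{c_k^{2},c_m^{2}\},\min\{c_k^{2},c_m^{2}\},c_n^{2})$, so
\[
\EC(\vec\varsigma_s)=(1-c_n^{2})|c_k^{2}-c_m^{2}|,\qquad\IL(\vec\varsigma_s)=(1-\min\{c_k^{2},c_m^{2}\})(\max\{c_k^{2},c_m^{2}\}-c_n^{2}),
\]
and a direct manipulation gives $\IL/\EC\le 1/|c_m^{2}-c_k^{2}|$. Using the elementary estimate $x\log(1/x)\le 1/e$ on $(0,1]$ with $x=|c_m^{2}-c_k^{2}|\le 2$ then yields $|c_m^{2}-c_k^{2}|\bigl(1+\log\sqrt{\IL/\EC}\bigr)\lesssim 1$. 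Because the coarse estimate above discards $|c_m^{2}-c_k^{2}|$, I refine it by switching from the \emph{complete} to an \emph{incomplete} elliptic integral: the change of variables $\sin^{2}\Psi=(v_1-v_4)(\lambda_k-v_2)/[(v_1-v_2)(\lambda_k-v_4)]$ on $(v_2,v_1)$, and its mirror on $(v_4,v_3)$, converts $\int_{1/2}^{1}\mq^{*}\,\rd\lambda_k$ into a sum of terms of the form $\sg\,F(\Psi_{*},\sk)/|c_m^{2}-c_k^{2}|$ with $\Psi_{*}$ determined by $\lambda_k\in\{1/2,1\}$. A direct evaluation of $1-\sin^{2}\Psi_{*}$ using the closed form \eqref{L:s} shows that under the hypothesis $\sin^{2}\Psi_{*}$ is uniformly bounded away from $1$, whence $F(\Psi_{*},\sk)\lesssim 1$ uniformly in $\sk$, giving $\mQ\lesssim 1/\sqrt{\IL(\vec\varsigma_s)}$. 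Finally, the working assumption \eqref{no:S2} is removed by continuity/approximation on the dense regular locus.

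The hardest piece is verifying uniformly that $1-\sin^{2}\Psi_{*}\gtrsim 1$. By Lemma \ref{lem:sort} it suffices to check the finitely many cosets of $K_4$ in $S_4$, but each case requires careful algebra with the closed form \eqref{L:s} to translate the hypothesis $|c_n|<\min\{|c_k|,|c_m|\}$ into the required geometric separation between the endpoints $\lambda_k\in\{1/2,1\}$ and the nearly coincident middle singular points $v_2,v_3$ of $\mq$ that drive the would-be logarithmic divergence of $K(\sk)$.
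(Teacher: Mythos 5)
Your treatment of the general bound \eqref{cI:lesssim} is correct and essentially identical to the paper's: drop the $\lambda_k\in[\frac12,1]$ restriction, reduce to the complete elliptic integral over the bumps, and combine the logarithmic bound for the complete integral with the identities \eqref{sg:so}, \eqref{sk:so}.

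For the sharper bound \eqref{cI:sharp}, however, there is a genuine gap. Your framework (pass to an \emph{incomplete} elliptic integral and show its amplitude stays away from $\pi/2$) is exactly what the paper's \eqref{int:yz:est:sharp} under \eqref{int:yz:est:sharp:c} encodes, but the entire substance of the lemma is the verification you defer as ``the hardest piece''. The paper first reduces to the regime $\IL(\vec\varsigma_s)/\EC(\vec\varsigma_s)>9$ (otherwise \eqref{cI:lesssim} already implies \eqref{cI:sharp}), then uses \eqref{L:s} and the sign relations to show $\Lambda_1>\Lambda_2>0>\Lambda_3>\Lambda_4$ (so the lower bump never meets $[\frac12,1]$), and then proves the quantitative separations $\Lambda_2-\Lambda_3\gtrsim1$ (via the coset/sign argument constraining $\{\Lambda_1,\Lambda_4\}$ together with the trigonometric closed form) and $\Lambda_1-\Lambda_2>2$ (by a two-case analysis according to whether $|c_m|>|c_k|$ or $|c_k|>|c_m|$, using the product resp.\ sum of the roots of the quadratic factors). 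None of this appears in your proposal, and it is where all the work lies. Moreover, your assertion that the hypothesis alone forces $\sin^2\Psi_*$ to be uniformly bounded away from $1$ is false: take for instance $\theta_n=80^\circ$, $\theta_m=60^\circ$, $\theta_k=39^\circ$, which satisfies $|c_n|<|c_m|<|c_k|$; then the largest root is $\Lambda_1\approx0.955\in(\frac12,1)$, the integration over $(\Lambda_2,\Lambda_1)\cap[\frac12,1]$ runs all the way up to the singular point $\Lambda_1$, and the amplitude equals $\pi/2$. That configuration is harmless only because $\sk$ is then bounded away from $1$ (here $\sk^2\approx0.2$), i.e.\ precisely because of the case split into ``$\IL/\EC$ bounded'' versus ``$\IL/\EC$ large'' that your argument omits. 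Two smaller points: your proposed substitution $\sin^2\Psi=(v_1-v_4)(\lambda-v_2)/[(v_1-v_2)(\lambda-v_4)]$ sends $v_3$ to a negative value of $\sin^2\Psi$, so it does not produce the Legendre form with modulus $\sk(\vec\Lambda_s)$; the correct reduction pairs $v_2$ with $v_3$ (i.e.\ uses $\lambda-v_3$ in the denominator), as in \eqref{int:yz}. And the intermediate computation $|c_m^2-c_k^2|\bigl(1+\log\sqrt{\IL/\EC}\bigr)\lesssim1$ is a dead end, as you note, since the factor $|c_m^2-c_k^2|$ cancels against $\sg$ and never survives into the bound on $\mQ$.
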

\begin{proof}Denote elements of the descending-ordered vectors 
\[
\vec \varsigma_{s} = (1,\varsigma_{2}, \varsigma_{3}, \varsigma_{4})\qua{\, and \,}\vec{\Lambda}_s=\left(\Lambda_{1},\Lambda_{2},\Lambda_{3},\Lambda_{4}\right).
\]  

Combine \eqref{def:mQ}, \eqref{def:mqs} and \eqref{def:mq}, noting the coefficient $-(c_m^2-c_k^2)^2$ of $\lambda_k^4$ therein, to show
\be\label{mQ:L}
\mQ(\theta_k, c_m)=\frac{|c_m|}{|c_m^2-c_k^2|}\int_{\left((\Lambda_4,\Lambda_3)\cup (\Lambda_2,\Lambda_1)\right) \atop \bigcap\left[\frac12,1\right]}\frac{\rd\lambda_k}{\sqrt{-(\lambda_k-\Lambda_1)(\lambda_k-\Lambda_2)(\lambda_k-\Lambda_3)(\lambda_k-\Lambda_4)}}\,.
\ee

(i) For proving the general bound \eqref{cI:lesssim}, we simply drop the $\left[\frac12,1\right]$ restriction of the above integral and combine elementary estimate \eqref{int:yz:est} with identities \eqref{sg:so}, \eqref{sk:so}.

(ii) For proving the sharper bound \eqref{cI:sharp}, by the geneneral bound  \eqref{cI:lesssim} we just proved, it suffices to assume $ { \IL(\vec{\varsigma_s})}/ {   \EC(\vec{\varsigma_s}) } > 9$ for the rest of the proof. Then 
\be
\label{k:est2} {\varsigma_{2} - \varsigma_{4}}>9( {\varsigma_{2} - \varsigma_{3}}).
\ee
Next, by \eqref{e:id}, \eqref{L:s},  we have
\begin{align}
\label{inv:pm:b}\Lambda^{+,\sigma_n}\Lambda^{-,\sigma_n} \,\big(c_m^2-c_k^2\big)&<0  ,\\
\nonumber\Lambda^{\sigma_k,+}\Lambda^{\sigma_k,-} \,\big( c_m^2-c_n^2\big)&<0,
\end{align}
the latter of which together with the ordering in \eqref{cI:sharp} implies 
\be\label{L:0}
\Lambda_1>\Lambda_2>0>\Lambda_3>\Lambda_4.
\ee
 Then in view of the restriction $\lambda_k\in[\frac12,1]$ in the integral \eqref{mQ:L}, it suffices to consider 
\be\label{L:2:1}
0<\Lambda_{2} < 1,
\ee 
and we can relax the integral domain of \eqref{mQ:L} to $[\Lambda_2,1]$. Then, by \eqref{sg:so} and  Proposition \ref{prop:ell}, the sharper bound  \eqref{cI:sharp} would hold if \eqref{int:yz:est:sharp:c} holds for $(a,b,c)=(\Lambda_1,\Lambda_2,\Lambda_3)$ and $y=1$. Since $1-\Lambda_2<1$ by \eqref{L:2:1}, it suffices to bound $\Lambda_{2}-\Lambda_{3}$ and $\Lambda_{1}-\Lambda_{2}$ from below by positive constants with the latter bound strictly greater than 1. The rest of the proof is for this purpose.
\begin{itemize}[leftmargin=*]
\item For bounding $\Lambda_{2}-\Lambda_{3}$ from below, we use affine relation \eqref{L:C} and Lemma  \ref{lem:prod}
 together with $\EC(1,c_n^2, c_k^2, c_m^2)<0$ and $\IL(1,c_n^2, c_k^2, c_m^2)<0$ thanks to the  ordering in \eqref{cI:sharp},  to have 
\[
\EC\left(\Lambda^{++},\Lambda^{+-},\Lambda^{-+},\Lambda^{--}\right)<0
\quad\text{  
and 
 }\quad
\IL\left(\Lambda^{++},\Lambda^{+-},\Lambda^{-+},\Lambda^{--}\right)<0.
\] 
Then, using simple logic\footnote{\,if the largest and smallest elements of $x, y, z, w$ form set $\{x,w\}$ or  set $\{y,z\}$, then $(x-y)(z-w)>0$.}, we find that $\{\Lambda_1,\Lambda_4\}$ can {\it not} equal to any of these sets: $\left\{\Lambda^{++},\Lambda^{-+}\right\}$, $\left\{\Lambda^{+-},\Lambda^{--}\right\}$, $\left\{\Lambda^{++},\Lambda^{--}\right\}$, $\left\{\Lambda^{+-},\Lambda^{-+}\right\}$. Therefore there are only ${4\choose2}-4=2$ choices for the set $\left\{\Lambda_{1},\Lambda_{4}\right\}$ and equivalently only 2 choices for $\left\{\Lambda_{2},\Lambda_{3}\right\}$. Then by \eqref{L:s}, we find
\[
\begin{aligned}
\Lambda_{2}-\Lambda_{3}&=\min\Big\{\big|\Lambda^{++}-\Lambda^{+-}\big|\,,\,\big|\Lambda^{--}-\Lambda^{-+}\big|\Big\}
=\frac {2|c_m|\sin\theta_n} {|c_m|\sin\theta_k + |c_k|\sin\theta_m}>\frac {2|c_m|} {|c_m| + |c_k|},
\end{aligned}
\]
where we used the ordering in \eqref{cI:sharp} and the positivity of all $\sin\theta$'s. Now, relax $\varsigma_4<0$ in \eqref{k:est2} and use it together with the ordering in  \eqref{cI:sharp} to find either $c_k^2<c_m^2$ or $c_k^2 < {\frac98}c_m^2$. Then continue from the above to prove a positive lower bound for 
$\Lambda_{2}-\Lambda_{3}$.
\item For bounding $\Lambda_{1}-\Lambda_{2}$ from below by a constant strictly greater than 1, there are two cases.

$\circ$ If $1 > |c_m|>|c_k|>|c_n|$, we observe that each pair of roots in either of the two quadratic-in-$\lambda_k$ factors of \eqref{q:cm2} multiply to  $\frac{c_m^2-c_n^2}{c_m^2-c_k^2}$ which is greater than $9$ by  \eqref{k:est2}.  Then, by the sign information of \eqref{L:0}, we have
  \(
  \Lambda_1\Lambda_2 >9
  \). This together with \eqref{L:2:1} shows 
$ \Lambda_1-\Lambda_2>8$.

$\circ$  If $1 > |c_k|>|c_m|>|c_n|$, then 
 \eqref{inv:pm:b} and \eqref{L:0} together imply $\left\{\Lambda_{1},\Lambda_{2}\right\}=\left\{\Lambda^{+,\sigma_n}, \Lambda^{-,\sigma_n}\right\}$ for $\sigma_n=1$ or $-1$, so we use \eqref{e:id}, \eqref{L:s} to obtain
\be\label{L12:est}
\begin{aligned}
\Lambda_{1}+\Lambda_{2}& = -\sin(\theta_m+\sigma_n\theta_n)\cdot\frac{\sin(\theta_m+\theta_k) + \sin(\theta_m-\theta_k)}{c_k^2-c_m^2} \\
&=\frac{c_m^2-c_n^2}{\sin(\theta_m-\sigma_n\theta_n)}\cdot \frac{2\,c_k\sin\theta_m}{c_k^2-c_m^2}>\frac{ 2\sin\theta_m}{\sin \theta_n + \sin \theta_m}\cdot\frac{c_m^2-c_n^2}{c_k^2-c_m^2},
\end{aligned}
\ee
where the last step was to due to $\Lambda_{1}+\Lambda_{2}>0$ and $|\!\sin(\theta_m-\sigma_n\theta_n)|<(\sin \theta_n + \sin \theta_m)\,|c_k|$ thanks to the ordering assumption. Then in view of \eqref{k:est2}, we  find
\[
 \Lambda_{1}+\Lambda_{2}>  \frac{ 16\sin\theta_m}{\sin \theta_n + \sin \theta_m}.
\]
By relaxing $c_k^2<1$ and rewriting $c_m^2-c_n^2=\sin^2\theta_n-\sin^2\theta_m$ in \eqref{L12:est}, we also  have
\[
 \Lambda_{1}+\Lambda_{2}>\frac{2(\sin\theta_n-\sin\theta_m)}{\sin\theta_m}.
\]
Since $\sin\theta_n>\sin\theta_m>0$ and the above two lower bounds have obvious monotonicity in $\sin\theta_n$, we can show $\Lambda_{1}+\Lambda_{2}>4$ which, with \eqref{L:2:1}, proves $\Lambda_{1}-\Lambda_2>2$.
\end{itemize}
\end{proof}

\subsection{Proof of Theorem \ref{thm:V}}\label{s:proof:V} 
We claim every NR condition with $\delta \in(0,\frac12)$ implies
\be\label{s4}
\sqrt{\varsigma_{4}}  \le \tfrac34,
\ee
regardless of the signs $\sigma_1,\sigma_2$.
Suppose not, so $1\ge\sqrt{\varsigma_2}\ge\sqrt{\varsigma_3}\ge \sqrt{\varsigma_4}>\frac34$. Note that, up to a harmless sign change, a triplet value is either $\sqrt{\varsigma_2}+\sqrt{\varsigma_3}\pm \sqrt{\varsigma_4}$ or $\sqrt{\varsigma_2}-\sqrt{\varsigma_3}\pm \sqrt{\varsigma_4}$. In the former case, the $\sqrt{\varsigma_2}>\frac34$ term is too large to make the entire expression fall within $[-\delta,\delta]$. In the latter case, the first two terms combine to a value in  $(0,\frac14)$, but $\sqrt{\varsigma_4}>\frac34$ so $\sqrt{\varsigma_2}-\sqrt{\varsigma_3}\pm \sqrt{\varsigma_2}\in [-\delta,\delta]$ is not possible. Contradiction has been reached. 

Applying $\log x \le x-1$ to \eqref{cI:lesssim} shows $\mQ  \lesssim \frac{1}{\sqrt{\EC(\vec\varsigma_{s})}}$. Combining this with \eqref{s4} to find
\be\label{Q:le:sharp}
 \mQ \lesssim  \frac1{\sqrt{\varsigma_{2}-\varsigma_{3}}} . 
\ee

Now, inspired by the ordering concern in Lemma \ref{lem:Q:est}, introduce shorthand notations
\[\begin{aligned}
\chi_0&:=\chibr{c_k c_m(1-c_k^2)(1-c_m^2)(c_m^2- c_k^2)(c_k^2-c_n^2)(c_n^2-c_m^2)=0},\\
\chi_1&:=(1-\chi_0)\cdot\chibr{|c_k|<\min\{|c_n|,|c_m|\}},\\
 \chi_2&:=(1-\chi_0)\cdot\chibr{|c_m|<\min\{|c_n|,|c_k|\}}, \\
  \chi_3&:=(1-\chi_0)\cdot\chibr{|c_n|<\min\{|c_k|,|c_m|\}}.
\end{aligned}\]
Since their sum is at least 1,
we perform simple change $c_k=\cos\theta_k$ in \eqref{V:est1} and have
\be\label{est:cmck}
\frac{vol(V_\sigg)}{\sL_1\sL_2|n|^3}\lesssim \sum_{j=0}^3\int_{(-1,1)^2}{\chibr{|\sigma_1c_n+\sigma_2c_k+c_m|\le\delta}} \cdot\chi_j \cdot  \mQ(\arccos c_k, c_m) \,\rd c_m\,\rd c_k.
\ee
 Note that we can view $\varsigma_2,\varsigma_3,\varsigma_4$ and $\chi_0,\ldots,\chi_3$ as measurable functions\footnote{\,e.g. $\min\{a, b\}=\frac12(a+b)-\frac12|a-b|$ and $\varsigma_3=c_n^2+c_k^2+c_m^2-\min\{c_n^2,c_k^2,c_m^2\}+\min\{-c_n^2,-c_k^2,-c_m^2\}$.} of $c_n, c_k, c_m$.

 The $j=0$ summand of \eqref{est:cmck} vanishes due to the zero measure of the support of $\chi_0$.

 For the $j=1$ summand of \eqref{est:cmck}, we use \eqref{Q:le:sharp} and change of coordinates from $(c_m, c_k)$ to $(c_m,\delta')$ with $\delta'=\sigma_1c_n+\sigma_2c_k+c_m$ to bound it as
 \[
\lesssim \int_{-1}^1\int_{-3}^{3} \chibr{|\delta'|\le\delta}\cdot\frac{\chi_1}{\sqrt{\varsigma_{2}-\varsigma_{3}}}\,\rd\delta'\,\rd c_m
= \int_{-\delta}^{\delta}\bigg(\int_{-1}^{1}\frac{\chi_1}{\sqrt{ |c_m^2-c_n^2|}} \,\rd c_m\bigg)\,\rd\delta',
 \]
where $ \varsigma_{2}-\varsigma_{3} = |c_m^2-c_n^2|$ was due to $\chi_1=1$.  Also,
\[
\chi_1=1\;\implies\;(-|c_n|,|c_n|)\ni -\sigma_2c_k=c_m-\delta'+\sigma_1c_n.
\]
Then,  the $\chi_1$ factor in the above right-hand side integrand  allows us to restrict the $c_m$-integral to an interval of length $2|c_n|$.   Therefore, by elementary Calculus,  the above inner integral is uniformly bounded. Thus the double integral is $\lesssim\delta$, which is consistent with Theorem \ref{thm:V}.

The $j=2$ summand of \eqref{est:cmck} is  treated similarly to $j=1$ since  \eqref{Q:le:sharp} depends on ordering, not labels of the $c$'s, and since $c_k,c_m$ play  symmetric roles in $\chi_1,\chi_2$. Also $c_k, c_m$ are symmetric in the NR condition in the sense of $ |\sigma_1c_n+\sigma_2c_k+c_m| = |\sigma_1\sigma_2 c_n+\sigma_2c_m+c_k| $.

For the $j=3$ summand of \eqref{est:cmck}, estimate \eqref{cI:lesssim} or \eqref{Q:le:sharp} is not sharp enough, especially since we will argue optimality in \S \ref{S:op}. Instead \eqref{cI:sharp} is used for this case. Then, by different but simple considerations for the cases of $\varsigma_2<(\frac78)^2$ and $\varsigma_2\ge(\frac78)^2$ (also note \eqref{s4}), we have
  \be\label{j3} \begin{aligned}
& \text{ the $j=3$ summand of \eqref{est:cmck}}\\
\lesssim & \int_{(-1,1)^2}{\chibr{|\sigma_1c_n+\sigma_2c_k+c_m|\le\delta}} \cdot \max\Big\{\frac{\chi_3}{\sqrt{\varsigma_2 - \varsigma_4}},\,\frac{\chi_3}{\sqrt{1- \varsigma_3}}\Big\}\,\rd c_m\,\rd c_k.
\end{aligned} \ee
 We prove the following estimates on the contributions of the two arguments of $\max$.\vspace{1mm} 
 \begin{itemize}[leftmargin=*]
 \item By definition of $\chi_3$, we have $\frac{\chi_3}{\sqrt{1-\varsigma_3}}\le \frac1{\sqrt{1-c_k^2}}$, so the contribution due to $\frac{\chi_3}{\sqrt{1-\varsigma_3}}$  is
 \[
 \lesssim \int_{-1}^1\int_{-1}^1{\chibr{|\sigma_1c_n+\sigma_2c_k+c_m|\le\delta}} \cdot  \frac{1}{\sqrt{1- c_k^2}}\,\rd c_m\,\rd c_k \le \int_{-1}^12\delta\cdot\frac{1}{\sqrt{1- c_k^2}}\,\rd c_k\lesssim\delta,
 \] 
  which is consistent with Theorem \ref{thm:V}. \vspace{1mm}
 \item
 For the contribution to  \eqref{j3} due to $\frac{\chi_3}{\sqrt{\varsigma_2 - \varsigma_4}} $, we divide it into two sub-cases using
 \[
 \chi_3=\chi_3\cdot\chibr{|c_k|< |c_m|}+\chi_3\cdot\chibr{|c_k|>|c_m|}\stackrel{def}=\chi_{3,1}+\chi_{3,2}.
 \]
 Thanks again to the $c_k, c_m$ symmetry, it suffices to treat the case  $\chi_{3,1}=1$.
 Then by change of coordinates from $(c_m, c_k)$ to $(c_m,\delta')$ with $\delta'=\sigma_1c_n+\sigma_2c_k+c_m$, we bound the contribution to  \eqref{j3} due to the double integral of $\chibr{\ldots}\cdot\frac{\chi_{3,1}}{\sqrt{\varsigma_2 - \varsigma_4}}\,\rd c_m \,\rd c_k$  as
  \be\label{chi3less}
\lesssim \int_{-1}^1\int_{-3}^{3} \chibr{|\delta'|\le\delta}\cdot\frac{\chi_{3,1}}{\sqrt{\varsigma_{2}-\varsigma_{4}}}\,\rd\delta'\,\rd c_m
= \int_{-\delta}^{\delta}\bigg(\int_{-1}^{1}\frac{\chi_{3,1}}{\sqrt{ c_m^2-c_n^2}} \,\rd c_m\bigg)\,\rd\delta',
 \ee
 with  $\varsigma_{2}-\varsigma_{4}=c_m^2-c_n^2$ by $\chi_{3,1}=1$.
 Also
  $\chi_{3,1}=1$  implies
   \[
(-|c_m|,|c_m|)\bigcap\Big((-1,-|c_n|)\cup(|c_n|,1)\Big)\ni    -\sigma_2c_k=c_m-\delta^*,\qua{ for} \delta^*:=\delta'-\sigma_1c_n.
   \]
   Now, for brevity, it suffices to integrate over $c_m\in(0,1)$ as the case of  $c_m\in(-1,0)$ is treated similarly.
   Thus, we have (with the convention that $(b, a)=\emptyset$ if $b\ge a$)
   \[\begin{aligned}
   \chi_{3,1}=1\text{ \, and \, }c_m\in(0,1)\;&\implies\;\delta^*-c_m<c_m<\delta^*+c_m
   \text{\, and \,}(0,\delta^*-|c_n|)\cup(\delta^*+|c_n|,1)\ni c_m\\
   &\implies\;\delta^*>0   \text{\, and \,} (\tfrac12\delta^*,\delta^*-|c_n|)\cup(\delta^*+|c_n|,1)\ni c_m.
   \end{aligned}\] 
    Combining them with  the simple estimates
\[
\int_b^a  \frac{\rd x}{\sqrt{x^2-c_n^2}}=\log\big(x+\sqrt{x^2-c_n^2}\big)\bigg|_b^a<\log\frac{2a}b,\qquad\text{for \; }a>b>|c_n|,
 \]
 and
 \[
\int_b^a  \log\frac1x\,\rd x= (x-x\log x)\Big|_b^a<(a-b)\Big(1+\log\frac1a\Big),\qquad\text{for \; }a>b\ge 0,
 \] 
 we find
 \[\begin{aligned}
 \eqref{chi3less}
 & \lesssim \delta +\max_{\sigma_1,\sigma_2}\int_{-\delta}^{\delta}\chibr{\delta'-\sigma_1c_n>0}\cdot\log^+\frac1{\delta'-\sigma_1c_n+|c_n|}\,\rd\delta'\\[1mm]
&\lesssim\begin{cases}
 \delta+\delta\log^+\frac{1}{\delta+2|c_n|},&\text{ if \, }\delta\le|c_n|,\\[1mm]
 \delta+\max\left\{(\delta+|c_n|)\log^+\frac{1}{\delta+2|c_n|},\,(\delta-|c_n|)\log\frac{1}{\delta}\right\},&\text{ if \, }\delta>|c_n|.
 \end{cases}
 \end{aligned}\]
 The latter bound can be relaxed using $\delta+|c_n|<2\delta$ and $\log\frac1\delta
 <\log 3+\log\frac1{\delta+2|c_n|}$,  hence merged into the former estimate. 
 \end{itemize}

We have completed the proof of  Theorem \ref{thm:V} (also see below \eqref{vol:V} regarding the notational change between $\cn$ and $n$).



\section{\bf Proofs of the Main Theorems}\label{s:mainproofs}

We are ready to prove the 2D-like estimates for bilinear form $\wt B$ resulting from our proposed NR approximation under suitable conditions on the bandwidth $\delta$.
 \begin{proof}[Proof of Theorem \ref{thm:2Dlike}]
 
 By \eqref{tri:e}, we find
 \[
 \ip{\pD^s \wt B( \vu,\vv),\pD^s\vw}=|\bTt|\sumcnkm |\cn|^{s}\, (u_k\cdot\cm\,\ri)\, (v_m\cdot w_n)\,|\cn|^{s}\,\ind_{\cN}({n,k,m})\,.
 \]
 Relax one $|\cn|^s$ factor using \eqref{cn:2s}.
 Note the symmetry of $\ind_\cN(\cdot,\cdot,\cdot)$ with respect to argument permutations is obvious by definition. Then we prove \eqref{tri:est} using Lemma \ref{restricted:L} 
 where counting condition \eqref{ci}  with $\beta=1$ is satisfied due to Theorems \ref{thm:int:vol}, \ref{thm:V} and the choice of $\delta$ in \eqref{de:up}.
 
Next, expand the left-hand side of \eqref{comm:est} using \eqref{tri:e},  then add it with the result of switching $n,m$, use incompressibility $u_k\cdot\cm=-u_k\cdot\cn$ and use the symmetry of $\ind_\cN(n,k,m)$ and the symmetry of convolution sum from Remark \ref{re:sum} to show 
\begin{align}
\label{tri:nm}
&\;2\Big|\ip{\pD^s\wt B(\vu,\vw),\pD^s\vw}\Big|\le |\bTt|\sumcnkm \big|u_k\cdot\cm\big|\, \big|w_m\cdot w_n\big| \,\big||\cn|^{2s}-|\cm|^{2s}\big|\,\ind_{\cN}({n,k,m}). 
\end{align}
Now, by the mean value theorem and $\big||\cn|-|\cm|\big|\le |\cn+\cm|$, we have
\be\label{cn:cm:s}
\big||\cn|^{s}-|\cm|^{s}\big|\le s\max\big\{|\cn|^{s-1},|\cm|^{s-1}\big\}\,|\ck|\,,\qquad\forall\,\text{real \,}s\ge0.
\ee 
We also have $|\cn|^{s}+|\cm|^{s}\le 2\max\big\{|\cn|^{s},|\cm|^{s}\big\}$. Then the first case of \eqref{comm:est} for $s\in(0,1]$ is due to the estimates so far, the simple fact below \eqref{comm} and Lemma \ref{restricted:L} with $\beta=1$. The second case for $s>1$ is done similarly with the additional estimate (by \eqref{cn:2s}):
\[
\max\big\{|\cn|^{s-1},|\cm|^{s-1}\big\}\le 2^{s-1}\min\big\{|\cn|^{s-1},|\cm|^{s-1}\big\}+2^{s-1}|\ck|^{s-1}.
\]
 \end{proof}
Global well-posedness result Theorem \ref{thm:NR}   shows the propagation of $H^s(\bTt)$ regularity of the initial data to all $t\ge 0$. When $s=0$, it coincides with the notion of weak solutions {\it a la} Leray (\cite{Leray}). Since various versions of weak solutions can be found in literature, let us  formalise it precisely for our purpose. First, we make sense of $\wt B$ when its arguments are only in  $L^2(\bTt)$.
  \begin{definition}\label{def:L2wk}
 Given $\bC^3$-valued $\vu,\vv\in L^2(\bTt)$ with $\vu$ div-free, we define $\wt B(\vu,\vv)$ as a functional on $\bC^3$-valued
smooth functions defined on $\bTt$, namely, for any such function $\vh(x)$, define
\[
\ipwk{\wt B(\vu,\vv),\vh}:=-\sum_{n\in\bTt}\left(\int_{\bTt}\sum_{k\in\bTt}e^{-\ri\cn\cdot x}(u_k\otimes v_{-n-k}):\nabla\leray\overline\vh(x)\,\rd x\right)\!\ind_\cN(n,k,-n-k),
\]
where $\otimes$ denotes tensor product and $:$ denotes the dot product between tensors.
\end{definition}
The above sum converges absolutely, because $\vh(x)$ is smooth (making $|h_n|\lesssim |\cn|^{-10}\pDn{20}\vh$),  and  by Cauchy-Schwartz inequality, we have 
\be\label{conv:L2L2}
{|\bTt|}\sum_{k\in\bTt}|u_k|\, |v_{-n-k}|\le \pDn0{\vu}\pDn0{\vv}\,,\quad\text{\, at a fixed $n$}.
\ee 
By a similar argument, the usual $L^2$ pairing can be recovered as (for real $s\ge 0$),
\be\label{ipwk:ip}
\ipwk{\wt B(\vu,\vv),\pD^{2s}\vh}=\big\langle{\pD^s\wt B(\vu,\vv),\pD^s\vh}\big\rangle,\, \quad\forall\,\vu,\vv\in H^{s+1}(\bTt)\text{ and smooth }\vh(x).
\ee

\begin{definition}\label{def:wk} We call $\wt\vU\in \scc^0([0,\infty); L^2(\bTt))\cap L^2([0,\infty); H^1(\bTt))$ a global weak solution of the NR approximation \eqref{NR} if $\wt\vU(t,\cdot)$ is div-free for all $t\ge 0$ and if, for any $\bC^3$-valued, smooth function $\vpsi(t,x)$, the following weak formulation holds  for any $T\ge 0$,
\[\begin{aligned} 
 &\ip{\wt\vU(T,\cdot),\vpsi(t,\cdot)}-\ip{\wt\vU(0,\cdot),\vpsi(0,\cdot)}+\int_0^{T}\ipwk{\wt B(\wt\vU(t,\cdot),\wt\vU(t,\cdot)),\vpsi(t,\cdot)}\,\rd t \\
 =\,&\int_0^{T}\int_\bTt\left(\wt\vU\cdot\partial_t\overline\vpsi+\Omega\cL\wt\vU\cdot\overline\vpsi-\mu\nabla\wt\vU:\nabla\overline\vpsi\right)(t,x)\,\rd x\,\rd t .
\end{aligned}
\]
 \end{definition}
Note the continuity in time for $\wt\vU(t,\cdot)$ with values in $L^2(\bTt)$ is a stronger requirement than the commonly seen  $L^\infty$-in-$t$ requirement. Also, as Theorem \ref{thm:NR} also covers more regular spaces, we note that for  weak solutions in $\scc^0([0,\infty); H^{s}(\bTt))\cap L^2([0,\infty); H^{s+1}(\bTt))$,  they coincides with  other notions of solutions as $s$ increases. For example, with $s\ge1$, both   $\Om\cL\tvU$ and $\mu \Delta \tvU$ belong to $L^2([0,\infty); H^{s-1}(\bTt))$.  One can show $\wt B(\tvU,\tvU)$ also belongs to this space using 
 \be\label{cn:2s}
 |\cn|^s\le (|\cm|+|\ck|)^s\le 2^{(s-1)^+}\big(|\cm|^s+|\ck|^s\big),\quad\text{\, for real \,}s\ge 0,
 \ee
(by minimising $1/(\alpha^s+(1-\alpha)^s)$ for $\alpha\in[0,1]$) and \eqref{tri:inf:fi}(ii) with $h$ being a test function. Then, the  result of setting $\vpsi=\vpsi(x)$ in  Definition \ref{def:wk}
 \[
 \tvU(T,\cdot)-\tvU(0,\cdot)=\int_0^{T}-\wt B(\tvU , \tvU) + \Om\cL\tvU + \mu \Delta \tvU\,\rd t
 \]  is a Bochner integral with integrand in $L^2([0,\infty); H^{s-1}(\bTt))$. Therefore, $\partial_t\tvU$ is also in this space and importantly, the two sides of PDE \eqref{NR} are regarded as an identical element in this space. In short, under the $s\ge1$ regularity, the weak formulation and such identity in the PDE form are equivalent, so one can manipulate the latter without involving the former. Similar argument shows, with $s\ge2$, the PDE is satisfied in the
$\scc^0([0,\infty); H^{s-2}(\bTt))$ space which,  if $s>{7\over2}$, is  embedded in $\scc^0([0,\infty)\times \bTt)$, the last case called classical solutions.

Next, let $\cPl$ denote projection  $\cPl\vu=\sum_{n\in\bZ^3,|\cn|<R}e^{\ri\cn\cdot x}u_n$, known as  low-pass filter.  
 
 \begin{remark}\label{re:cPl}$\cPl$ preserves the realness of its argument, is self-adjoint with respect to the $\ip{,}$ inner product, and commutes with  $\leray,\cL,\pD$, differentiation and integration.
\end{remark}

 \begin{remark}\label{re:Ut}  For a weak solution from Definition \ref{def:wk}, using $\vpsi=e^{\ri\cn\cdot x}$ and \eqref{conv:L2L2}, we find Fourier coefficients of $\tvU(t,\cdot)$ are smooth functions of $t$, hence $\cPl\tvU\in \scc^\infty([0,T]\times \bTt)$. 
 \end{remark} 
 \begin{lemma}[{\bf Stability of NR approximate dynamics}]\label{lem:stable}
 For the NR approximation \eqref{NR} under the same assumptions on $\delta, \cN$ as in Theorem \ref{thm:NR}, suppose it admits two weak solutions with  regularity $\wt\vU,\wt\vU'\in \scc^0([0,T]; H^s(\bTt))\cap L^2([0,T]; H^{s+1}(\bTt))$ for some real  $s\ge0$ and $T>0$. Then, for $E=\int_0^T\pDn{s+1}{(\tvU,\tvU')}^2\,\rd t$ and  constant $C=C(s,|\bTt|)$, we have,
 \[
 \pDn{s}{\tvU-\tvU'}^2(t)\le e^{C\mu^{-1}E}\pDn{s}{\tvU-\tvU'}^2(0),
 \]
 for any $t\in[0,T]$, and
 \[
 \int_0^T\pDn{s+1}{\tvU-\tvU'}^2(t)\,\rd t\le \mu^{-1}\big(1+C\mu^{-1}Ee^{C\mu^{-1}E}\,\big)\pDn{s}{\tvU-\tvU'}^2(0),
 \]
 \end{lemma}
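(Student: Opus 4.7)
The plan is to derive an energy-type inequality for $\vw:=\tvU-\tvU'$. By the bilinearity of $\wt B$, the difference satisfies
\[
\pa_t\vw + \wt B(\vw,\tvU) + \wt B(\tvU',\vw) = \Om\cL\vw + \mu\Delta\vw,\qquad \nabla\!\cdot\vw=\boldsymbol 0,
\]
in the weak sense of Definition~\ref{def:wk}. Before running the energy computation, regularisation is needed to justify pairing the equation with $\pD^{2s}\vw$, since at $s=0$ the solution has only the Leray-type regularity. I would first apply the low-pass filter $\cPl$ (Remark~\ref{re:cPl}) to the difference equation, which by Remark~\ref{re:Ut} yields a smooth equation, pair it with the smooth test function $\pD^{2s}\cPl\vw$, and pass $R\to\infty$. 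The commutator $[\cPl,\wt B]$ that arises in this step vanishes in the limit by dominated convergence, with the dominating bound supplied by the absolute convergence of the restricted convolution (Remark~\ref{re:sum}) together with Lemma~\ref{restricted:L} at $\beta=1$; the counting hypothesis of the latter is exactly what Theorems~\ref{thm:int:vol} and~\ref{thm:V} give under the bandwidth assumption~\eqref{de:up}.

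After justification, the skew-adjointness of $\cL$ (which commutes with $\pD^s$) eliminates the Coriolis contribution and leaves
\[
\tfrac12\tfrac{\rd}{\rd t}\pDn s\vw^2 + \mu\pDn{s+1}\vw^2 = -\ipb{\pD^s\wt B(\vw,\tvU),\pD^s\vw} - \ipb{\pD^s\wt B(\tvU',\vw),\pD^s\vw}.
\]
For the first term I would invoke the 2D-like estimate~\eqref{tri:est} with $(\vu,\vv)=(\vw,\tvU)$ and third slot $\vw$. The second term is where Theorem~\ref{thm:2Dlike} really does its job: at $s=0$ it vanishes outright by the anti-symmetry~\eqref{c:p} of Proposition~\ref{prop:Bt}, while for $s>0$ I would apply the commutator-type estimate~\eqref{comm:est} with $\vu=\tvU'$.

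Every product on the right-hand side carries at least one factor of $\pDn{s+1}\vw$, so I would split it off via Young's inequality $ab\le \tfrac{\mu}{8}a^2 + C\mu^{-1}b^2$ and absorb the $\pDn{s+1}\vw^2$ pieces into the viscous term on the left. The one product $\pDn{s+1}{\tvU'}\pDn 1\vw\pDn s\vw$ appearing in the $s>1$ branch of~\eqref{comm:est} does not contain $\pDn{s+1}\vw$, so I would Young-split it as $C\mu^{-1}\pDn{s+1}{\tvU'}^2\pDn s\vw^2 + \tfrac{\mu}{8}\pDn 1\vw^2$ and then absorb $\pDn 1\vw^2\lesssim\pDn{s+1}\vw^2$ (on zero-mean functions). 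The remaining mixed products involving $\pDn s{\tvU'}$ are treated similarly, using the embedding $\pDn\sigma\cdot \lesssim_{s,\bTt}\pDn{s+1}\cdot$ for $0\le\sigma\le s+1$ on zero-mean functions. The outcome is the scalar differential inequality
\[
\tfrac{\rd}{\rd t}\pDn s\vw^2 + \mu\pDn{s+1}\vw^2 \;\le\; C\mu^{-1}g(t)\,\pDn s\vw^2,
\]
where $g(t)\lesssim \pDn{s+1}\tvU^2 + \pDn{s+1}{\tvU'}^2$, so that $\int_0^T g\,\rd t \lesssim E$.

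The first inequality of the lemma now follows from Grönwall applied to $\tfrac{\rd}{\rd t}\pDn s\vw^2 \le C\mu^{-1}g(t)\pDn s\vw^2$. For the second, integrate the full dissipative inequality over $[0,T]$, drop the nonnegative $\pDn s\vw^2(T)$ on the left, and substitute the pointwise Grönwall bound into the right-hand side. The main technical obstacle is the $s=0$ case, where the truncation-and-limit argument must be run with care: the commutator $[\cPl,\wt B]$ must be shown to produce only tails of absolutely convergent sums, uniformly on $[0,T]$, before dominated convergence can be invoked. The remaining difficulty is organisational bookkeeping — keeping the Young/embedding chain tight enough that the final Grönwall weight $g$ satisfies $\int_0^T g \lesssim E$ rather than some larger quantity depending on pointwise norms of $\tvU,\tvU'$.
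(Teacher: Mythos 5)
Your proposal is correct and follows essentially the same route as the paper: test the difference of the weak formulations against the smooth $\pD^{2s}\cPl\vw$, pass $R\to\infty$ by dominated convergence, bound the cross term via \eqref{tri:est} and the term with repeated argument via \eqref{comm:est} (the $s=0$ cancellation being the \eqref{c:p} symmetry), absorb $\pDn{s+1}\vw$ factors into the viscosity by Young, and conclude with Gr\"onwall plus integration for the second bound. The only cosmetic differences are the symmetric splitting $\wt B(\vw,\tvU)+\wt B(\tvU',\vw)$ versus the paper's $\wt B(\tvU,\vw)+\wt B(\vw,\tvU')$, and that the paper sidesteps your commutator $[\cPl,\wt B]$ by instead splitting off the term $\wt B(\tvU,\vw-\vw_R)$ and letting it vanish in the limit.
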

 \begin{proof}  
 Let $\vw=\tvU-\tvU'$ and $\vw_R=\cPl\vw$.  
By taking the difference of the weak formulation in Definition \ref{def:wk}  for $\tvU$ and $\tvU'$, setting $\vpsi=\pD^{2s}\vw_R$ (smooth by Remark \ref{re:Ut}) and noting the term involving $\cL$ vanishes, we have, for any $ t_1\in[0,T]$,
 \begin{align}
\label{e:m} \pDn{s}{\vw_R}^2(t_1)-&\pDn{s}{\vw_R}^2(0)+I_R=\int_{0}^{t_1}\frac12{\rd\over\rd t}\pDn{s}{\vw_R}^2-\mu\pDn{s}{\nabla\vw_R}^2\;\rd t,\\
\nonumber \text{ with \; }I_R=&\int_{0}^{t_1}\ip{\wt B(\tvU,\vw_R)+\wt B(\tvU,\vw-\vw_R)+\wt B(\vw,\tvU'),\pD^{2s}\vw_R}_{wk}\,\rd t\,.
 \end{align}
Since $\tvU(t,\cdot),\tvU'(t,\cdot)\in H^{s+1}(\bTt)$ for almost every $t\in[0,T]$, and since $I_R$ is an integral in $t$, we can transform its integrand using \eqref{ipwk:ip}. Then, by Theorem \ref{thm:2Dlike}, we find
\[\begin{aligned}
|I_R|\lesssim_{|\bTt|,s}\!\int_{0}^{t_1}\pDn{s+1}\tvU\,\pDn{s}{\vw_R}\,\pDn{s+1}{\vw_R}+
&\big(\pDn s\tvU\,\pDn{s+1}{\vw-\vw_{R}}+\pDn{s}{\vw}\,\pDn{s+1}{\tvU'}\big)\pDn{s+1}{\vw_R}+\\
&\big(\pDn {s+1}\tvU\,\pDn{s+1}{\vw-\vw_{R}}+\pDn{s+1}{\vw}\,\pDn{s+1}{\tvU'}\big)\pDn{s}{\vw_R}\,\rd t.
\end{aligned}\]
Then,  take the limit of \eqref{e:m} as $R\to\infty$. The assumed regularity on $\tvU,\tvU'$ ensures  Lebesgue dominated convergence theorem can be applied to the above right-hand side and also ensures $\pDn{s+1}{\vw-\vw_{R}}(t)\to 0$ as $R\to\infty$ for almost every $t\in[0,T]$. So
\[\begin{aligned}
\pDn{s}{\vw}^2(t_1)-\pDn{s}{\vw}^2(0)+2\int_{0}^{t_1}\mu\pDn{s}{\nabla\vw}^2\,\rd t
\lesssim& \int_{0}^{t_1}\pDn{s+1}{(\tvU,\tvU')}\,\pDn{s}{\vw}\,\pDn{s+1}{\vw}\,\rd t.
\end{aligned}\]
Using the assumed regularity and the dominated convergence argument again, we can replace $(\tvU,\tvU')$ by $\cPl(\tvU,\tvU')$ up to arbitrary error. Then, further applying Young's inequality shows
\[
\pDn{s}{\vw}^2(t_1)-\pDn{s}{\vw}^2(0)+\int_{0}^{t_1}\mu\pDn{s}{\nabla\vw}^2\,\rd t
\lesssim \ep_R+ \int_{0}^{t_1}\mu^{-1}\pDn{s+1}{\cPl(\tvU,\tvU')}^2(t)\,\pDn{s}{\vw}^2(t)\,\rd t,\;\;\forall t_1\in[0,T],
\]
with $\ep_R\to 0$ as $R\to\infty$. 
Dropping the left-hand side integral gives a Gr\"onwall's inequality for $\pDn{s}{\vw}^2(t)$ with $\pDn{s+1}{\cPl(\tvU,\tvU')}^2(t)$ being smooth due to Remark \ref{re:Ut}. This then proves the first part of the conclusion. Substituting it into the above proves the second part.
 \end{proof}

\subsection{Proof of the global well-posedness result Theorem \ref{thm:NR}}\label{ss:mainproof}
The stability statement of the theorem is self-explained. Together with estimate \eqref{global:Hs}, it implies uniqueness.
 Let us prove global existence and \eqref{NR:en:L2}, \eqref{global:Hs}. 
   
  First,  
  fix large $\rho>0$ (to appear in \eqref{vVR}). For any $R>\rho$, define bilinear form 
\[
\wt B^R(\vU,\vV):=\cPl\wt B(\cPl\,\vU,\cPl\vV).
\] 
Its eigen-basis expansion is similar to \eqref{wt:B:full2} but with  an additional $\ind_{\{\max\{|\cn|,|\ck|,|\cm|\}<R\}}(n,k,m)$ factor in the summand. 
Consider the following equation
 \be\label{vVR}
 \partial_t \vVR   + \wt B^R(\vVR , \vVR) = \Om\cL\vVR + \mu \Delta \vVR,
\quad \text{\, with \, }\vVR(0,\cdot)=\cP_{\!<\rho} \tvU_0.
 \ee
  Define also $E^{\vV}_{j0}:=\pDn{j}{\vVR(0,\cdot)}^2$ for any $j\ge 0$. Apparently $E^{\vV}_{j0}$ is finite and independent of $R$.

The collection of Fourier modes $\tVR_n(t)$ with $|\cn|<R$ satisfy a self-contained, finite dimensional ODE system. (A side note: nonlinearity $\wt B^R$ can cause $\tVR_n(t)\ne0$ for modes $\rho\le |\cn|<R$ at $t>0$, even for initial data as in \eqref{vVR}.) We refer to it as the ``$R$-low-pass ODE'' and prove it is globally solvable as follows. Apparently $\vVR(t,\cdot)$ is div-free. By Remark \ref{re:cPl} and Proposition \ref{prop:Bt}, we find $\ipb{\wt B^R(\vVR , \vVR),\cPl\vVR}=0$ and also $\ipb{\cPl(\Om\cL\vVR+\mu\Delta\vVR),\cPl\vVR}\le0$. Therefore, applying $L^2$ energy method on \eqref{vVR} shows,  if the  ODE is solvable for $t\in[0,T]$, then
\be\label{l2:ineq}
\sum_{n\in\bTt,|\cn|<R}|\tVR_n|^2(t) \le \sum_{n\in\bTt,|\cn|<R}|\tVR_n|^2(0)= |\bTt|^{-1}E_{00}^\vV\,,\qquad\forall \;t\in[0,T].
\ee
Now, the $R$-low-pass ODE is autonomous. Its right-hand side  function is Lipschitz continuous in variables $\{\tVR_n\}_{|\cn|<R}$  with respect to the max norm, and the Lipschitz constant is controlled by $\max\big\{|\tVR_n|\big\}_{|\cn|<R}$. 
Then, apply Picard iteration on the $R$-low-pass ODE in an interval $t\in[0,t_1]$ for $t_1$ small enough so that the result of each iteration has its max norm bounded by $2\sqrt{|\bTt|^{-1}E_{00}^\vV}$\,. Note that this bound holds for the base step of the iteration because we know \eqref{l2:ineq} holds at $t=0$. The aforementioned Lipschitz continuity implies that we can decrease $t_1$ (if need be) to make the iteration mapping a contraction with respect to the max norm, so by the Banach fixed-point theorem,  the solution exists  for $t\in[0,t_1]$. We can choose $t_1$ to depend only on $R$ and $|\bTt|^{-1}E_{00}^\vV$, the proof of which is elementary, thus omitted. The solvability proven so far implies \eqref{l2:ineq} holds at $t=t_1$. 
 Then, by the same Picard iteration and fixed-point argument, we prove the $R$-low-pass ODE is solvable for $t\in[t_1,2t_1]$, and for $t\in[2t_1,3t_1]$, and so on.\vspace{1mm}
 
On the other hand, the choice of initial date in \eqref{vVR} and the definition of $\wt B^R$ with $R>\rho$ implies $\tVR_n(t)\equiv 0$ if $|\cn|\ge R$. 
Then, the information on $\tVR_n(t)$  we have shown so far together with \eqref{vVR} implies $\vVR(t,x)$ is smooth in $[0,\infty)\times\bTt$. So we take the inner product $\pD^{j}\eqref{vVR}$ with $\pD^{j}\vVR$, knowing Calculus rules for smooth functions apply, to obtain
  \[
\frac12\frac{\rd}{\rd t}\pDn {j}\vVR^2+\mu\pDn {j+1}\vVR^2=-\big\langle\,\pD^j\wt B^R(\vVR,\vVR),\pD^j\vVR\,\big\rangle, \qquad \forall\,\text{real \,}j\ge 0.
  \]
  The proof of 2D-like estimate \eqref{comm:est} can be repeated verbatim for $\wt B^R$ except that we apply the $n,m$ symmetry of $\ind_\cN(n,k,m)\ind_{\{\max\{|\cn|,|\ck|,|\cm|\}<R\}}(n,k,m)$ to obtain a version of \eqref{tri:nm} where $\wt B^R$ replaces $\wt B$  and there is an additional harmless $\ind_{\{\max\{|\cn|,|\ck|,|\cm|\}<R\}}(n,k,m)$ factor in the upper bound. Therefore, for any real $j\ge0$, we have
\be\label{G:ineq}\begin{aligned}
&\quad\frac12\frac{\rd}{\rd t}\pDn{j}\vVR^2+\mu\pDn {j+1}\vVR^2
\le \sqrt{2C_{j}} \, \pDn1\vVR \pDn {j}\vVR\, \pDn {j+1}\vVR \\&\le \frac{C_{j}}{\mu}\pDn1\vVR^2\,\pDn {j}\vVR ^2 +\frac\mu2\pDn {j+1}\vVR ^2\,,\qquad\qquad\text{\, 
for \, }0\le C_{j}\lesssim 4^{j}j^2 \,|\bTt|^{-1}.
\end{aligned}\ee 
Since setting $j=0$ gives zero value after the first $\le$, we obtain the analogue of equality \eqref{NR:en:L2},
\be\label{NR:en:L2:V}
\|\vVR(T,\cdot)\|_{L^2}^2+2\mu\int_0^{T}\|\nabla\vVR\|_{L^2}^2\,\rd t =  \|\vVR(0,\cdot)\|_{L^2}^2\,, \qquad\forall \,T\ge 0.
\ee
Solving Gr\"onwall's inequality \eqref{G:ineq} for general $j$ gives
\[
\pDn {j}{\vVR(T,\cdot)} ^2\le \exp\left(\frac{2C_{j}}\mu\int_0^{T}\pDn1{\vVR(t,\cdot)}^2\,\rd t\right)E_{j0}^\vV\,,\qquad\forall\,T\ge 0\text{ and }j\ge 0.
\]
Then, integrate \eqref{G:ineq} in time and relax every $\pDn1\vVR^2$  to the above right-hand side, followed by applying $\int_0^\infty\pDn1{\vVR(t,\cdot)}^2\,\rd t\le\frac1{2\mu}E_{00}^\vV$ (by \eqref{NR:en:L2:V}), to obtain an upper bound on $\int_0^\infty\pDn {j+1}{\vVR}^2\,\rd t$. In summary, we prove \eqref{global:Hs} with every $\tvU$ replaced by $\vVR$, $s$ replaced by $j$,   $E_{s0}$ replaced by $E_{j0}^\vV$ and $E_{00}$ replaced by $E_{00}^\vV$.  As any order of time derivatives of $\vVR$ can be estimated using \eqref{vVR}, \eqref{cn:2s}, \eqref{tri:inf:fi}(ii), it is easy to show that all $\{\vVR\}_{R>\rho}$ have their $H^{j}([0,\infty)\times\bTt)$ bounds finite and independent of $R$. Obviously $j$ can be arbitrarily large. Therefore by compact embedding, for any positive $j,T$, there exist $\vV_{\!\!j,T}\in H^{j}([0,T]\times\bTt)$ and a sequence of $R$ values (indices omitted for brevity) tending to $\infty$ so that
\[
\lim_{R\to\infty}\big\|\vVR-\vV_{\!\!j,T}\big\|_{H^{j}([0,T]\times\bTt)}=0.
\]
Using the standard technique of repeatedly extracting subsequences of $R$ values (again, indices omitted for brevity), we show there exists a smooth function $\vV(t,x)$ so that
\[
\lim_{R\to\infty}\big\|\vVR-\vV\big\|_{H^{j}([0,T]\times\bTt)}=0,
\]
for any positive $j,T$. 
By Sobolev embedding, this implies convergence in $\scc^{j-3}([0,T]\times\bTt)$. Then for very positive $j$, every linear term of \eqref{vVR} converges to its $\vV$ analogue in $H^{j}([0,T]\times\bTt)$ and the initial condition part also converges in $H^j(\bTt)$. For the bilinear term of \eqref{vVR}, split the following difference into three parts,
\[\begin{aligned}
&\wt B^R(\vVR , \vVR)-\wt B(\vV , \vV)= \big(\wt B^R(\vVR , \vVR)-\wt B^R(\vV , \vV)\big)+\\
&\big(\wt B^R(\vV , \vV)-\wt B^R(\cP_{\!\!<{R / 2}}\vV , \cP_{\!\!<{R / 2}}\vV)\big)+
\big(\wt B(\cP_{\!\!<{R / 2}}\vV , \cP_{\!<{R / 2}}\vV)-\wt B(\vV , \vV)\big),
\end{aligned}\]
where the two terms involving $\cP_{\!\!<{R / 2}}$ cancel due to the definition of $\wt B^R$. 
Each difference is concerned with the same type of bilinear form and can be estimated similarly. For example, by Definition \ref{def:L2wk}, \eqref{conv:L2L2} and choosing any smooth $\vh(x)$ (making $|h_n|\lesssim |\cn|^{-10}\pDn{20}\vh$), we have
\be\label{ipwk:stable}
 \ipwk{\wt B(\vu,\vu) - \wt B(\vu',\vu'),\vh}\lesssim \big(\pDn0{\vu}+\pDn0{\vu'}\big)\pDn0{\vu-\vu'}\pDn{20}\vh.
\ee
At any fixed $t\in[0,\infty)$, set  $\vh=\pD^{2j}\big(\wt B(\cP_{\!\!<{R / 2}}\vV , \cP_{\!\!<{R / 2}}\vV)-\wt B(\vV , \vV)\big)$, $\vu=\cP_{\!\!<{R / 2}}\vV$, $\vu'=\vV$, and use the smoothness of $\vV$ as proven above, noting \eqref{ipwk:ip}, to find
$\big\|\wt B(\cP_{\!\!<{R / 2}}\vV , \cP_{\!\!<{R / 2}}\vV)-\wt B(\vV , \vV)\big\|_j$ to be essentially controlled by $\pDn0{\cP_{\!\!<{R / 2}}\vV-\vV}$\,, hence tending to 0 as $R\to\infty$. 

 In short, we have just proven that every term in \eqref{vVR} converges strongly to its $\vV$ analogue in $\cC^0([0,T];H^j(\bTt))$ as $R\to\infty$ for any $j$, and in particular, the strong limit of $\wt B^R(\vVR , \vVR)$ is $\wt B(\vV , \vV)$ with the bilinear form also converging.  To finish the proof, we rename $\vU^{(\rho)}:=\vV$ to highlight its dependence on $\rho$. Then
\be\label{vVrho}
 \partial_t \vU^{(\rho)}   + \wt B(\vU^{(\rho)} , \vU^{(\rho)}) = \Om\cL\vU^{(\rho)} + \mu \Delta \vU^{(\rho)}\,,
\quad \text{\, with \, }\vU^{(\rho)}(0,x)=\cP_{\!<\rho}\tvU_0\,.
 \ee
 Recall we have also proven \eqref{NR:en:L2}, \eqref{global:Hs} with $\tvU$ is replaced by $\vV^{R}$, and by taking the limit, we obtain their $\vU^{(\rho)}$ analogue.
Then, by the stability Lemma \ref{lem:stable}, we have $\{\vU^{(\rho)}\}_{\rho\in\bZ^+}$ form a Cauchy sequence in $\scc^0([0,\infty); H^s(\bTt))$ and $\{\nabla\vU^{(\rho)}\}_{\rho\in\bZ^+}$ form a Cauchy sequence in $L^2([0,\infty); H^{s}(\bTt))$. Both are Banach spaces, so each sequence converge strongly in the respective space to, say, $\tvU$ and ${\bs W}$. Therefore, for any $3\times3$ tensor-valued smooth function $\vh(t,x)$, we have
 \[
 \int_0^T\int_\bTt \tvU\cdot(\nabla\cdot\vh)-{\bs W}:\vh\,\rd x\,\rd t=
 \lim_{\rho\to\infty} \int_0^T\int_\bTt \vU^{(\rho)}\cdot(\nabla\cdot\vh)-\nabla\vU^{(\rho)}:\vh\,\rd x\,\rd t=0.
 \]
 By choosing the smooth function $\vh=\cP_{<R}(\nabla\tvU-{\bs W})$ for any $R>0$, we show $\nabla\tvU={\bs W}$. 
 
The strong convergence of $\vU^{(\rho)}\to\tvU$  in the above sense implies every term in the weak formulation ({\it a la} Definition \ref{def:wk}) of \eqref{vVrho} converges to the desired limit. Note: convergence of the term with $\ipwk{,}$ is due to \eqref{ipwk:stable}.
  The global existence part of the Theorem has been proven. Also,  \eqref{NR:en:L2} and \eqref{global:Hs} follow from the strong convergence of $\vU^{(\rho)}\to\tvU$ together with argument just below \eqref{vVrho} and the fact that 
 $\pDn0{\vU^{(\rho)}(0,x)}^2=\pDn0{\cP_{\!<\rho}\tvU_0}^2\to E_{00}$ as $\rho\to\infty$.
 
The proof of  Theorem \ref{thm:NR} is complete.

\subsection{Proof of the error estimate Theorem \ref{thm:local:e}}\label{ss:errorproof}

First, we show nonlinear estimates in a generic setting. For functions $f,g,h$ in $\bTt$, define
\[
f_{abs}:=\sum_n |f_n| \, e^{\ri\cn\cdot x},\text{ \; and similarly define }g_{abs},h_{abs}.
\]
Since ${g_{abs} h_{abs}}=\sum_n\Big(\sum_k |g_{k}|\,|h_{n-k}|\Big)e^{\ri\cn\cdot x}$ which shows that the coefficient of  $e^{\ri\cn\cdot x}$  in the Fourier series of $\overline{g_{abs} h_{abs}}$ is $\sum_k |g_{k}|\,|h_{-n-k}|$ (note the minus sign in $-n$), we obtain 
\begin{align*}
|\bTt|\sumcnkm|f_n|\,|g_k|\,|h_m|&=|\bTt|\sum_n\Big( {|f_n|}\overline{\sum_k|g_{k}|\,|h_{-n-k}|}\Big)\\&\stackrel{\eqref{P:thm}}{=}\big\langle f_{abs},\overline{g_{abs} h_{abs}}\big\rangle\le \|f_{abs}  \|_{L^2}\,\| g_{abs} h_{{abs}} \|_{L^2}.
\end{align*}
Therefore,  applying H\"older's inequality on $\|g_{abs} h_{abs} \|_{L^2}$ followed by applying Sobolev inequalities, we show, for {\bf generic} set $\cNg\subset\big(\bZtno\big)^3$ which surely satisfies $\ind_{\cNg}(n,k,m)\le1$, that
\be
\label{tri:inf:fi}
\sumcnkm |f_n|\,|g_k|\,|h_m|\ind_{\cNg}(n,k,m) \lesssim
\begin{cases}
C_\gamma \,\pDn {\frac32+\gamma}f \, \pDn0g  \, \pDn0h\,, &\text{for real \,}\gamma>0,\\
C_\beta\, \pDn {\frac32-\beta}f  \, \pDn {\beta}g \, \pDn0h\,,&
\text{for  real \,}\beta\in(0,\tfrac32).
\end{cases}
\ee
By the same approach together with \eqref{c:p}, \eqref{cn:cm:s}, \eqref{cn:2s}, we obtain the following on $\bR^3$-valued, div-free, zero-mean functions  $\vu,\vv\in H^{s+1}(\bTt)$ (see e.g. \cite[Lemma A.1]{KM:limit} for integer $s$) 
\begin{align}
\nonumber
&\qquad\Big|\big\langle \pD^{s} B( \vu,\vv),\pD^s\vv\big\rangle\Big| =\Big|\big\langle \pD^{s} B( \vu,\vv) - B(\vu,\pD^{s}\vv),\pD^s\vv\big\rangle\Big| \\
& \lesssim \begin{cases}C_{s,\gamma}\pDn {\max\{s,\frac52+\gamma\}}\vu\,\pDn {s}\vv^2,&\text{ for real  \,} s \ge1\text{ and }\gamma>0,\\
C_s\big(\pDn {s+1}\vu\,\pDn {3\over2}\vv+\pDn {3\over2}\vu\,\pDn {s+1}\vv\big)\,\pDn {s}\vv,&\text{ for real  \,}s>0,
\end{cases}
\label{comm}\end{align}
where for $s\in(0,1)$, we also used $|u_k\cdot\cm| \le |u_k|\min\{|\cm|,|\cn|\}$ by incompressibility $u_k\cdot\cm=-u_k\cdot\cn$.

Following from the above, we remark on local-in-time existence and estimates of solutions. For this purpose, 
the formalisms of the original RNS equations \eqref{NS} and our NR approximation \eqref{NR} are regarded in the same way -- in fact, the proof works for any {\it generic} set $\cNg\subset(\bTt)^3$. First, inequality \eqref{comm}(i) can be shown to also hold for $\wt B$ with a generic $\cNg$. Then with
 the initial data for $\tvU$ and $\vU$ given in Theorem \ref{thm:local:e}, one can use the same construction as in the proof of Theorem \ref{thm:NR}, but with 3D-like estimates \eqref{tri:inf:fi}, \eqref{comm} rather than 2D-like estimates of Theorem \ref{thm:2Dlike}, and show local-in-time existence and estimates for both solutions as,
 \be\label{classic:e:e}
\max_{t\in[0,c/E_0^{\frac12}]}\big(\pDnb s{\tvU}^2(t)+\pDnb {s^\flat}{\vU}^2(t)\big) +\mu \int_0^{c/E_0^{\frac12}}\pDnb {s+1}{\tvU}^2(t)+\pDnb {s^\flat+1}{\vU}^2(t)\,\rd t\lesssim_{s,s^\flat} E_0,
\ee
with constants $c,C,s,s^\flat$ described in Theorem \ref{thm:local:e}. Note in particular $s,s^\flat>\frac52$. Also note that, in proving the above, one should change the counterpart of the {\it first} inequality in \eqref{G:ineq} by raising the $H^1$ norm to $H^{\frac52+\gamma}$ for $\gamma>0$ and lowering the $H^{j+1}$ norm to $H^j$. The second inequality of \eqref{G:ineq} is irrelevant here. 
 The resulting Gr\"onwall's inequality is controlled by a Ricatti-type ODE for which the solution remains bounded in time interval $[0,c/E_0^{\frac12}]$ as claimed in \eqref{classic:e:e}.  (Note: for Navier-Stokes equations, see standard results such as \cite[Ch.\,17, Theorem\,4.1, (4.16), Props.\,4.2,\,4.3]{Taylor:3}.)

\begin{proof}[Proof of Theorem \ref{thm:local:e}]
We will use the normal preserving property \eqref{norm:pr} of $\etau$ without reference. Then $\pDn s{\tvU}=\pDn s{\tvu}$. Also, property \eqref{ip:uvw} implies, for any real $s\ge0$, that
\be\label{Ds:B:tau}
\big\langle  \pD^{s}B(\tau;\vu,{ \vv}) ,\pD^{s}\vw\big\rangle 
 =  \big\langle  \pD^{s} B( \etau\vu,{ \etau\vv}) ,\pD^{s}\etau\vw\big\rangle.
\ee
So, tri-linear estimates \eqref{tri:inf:fi}, \eqref{comm} will be used in conjunction with the above as needed.

 Subtract the two transformed systems \eqref{m:NS},  \eqref{m:NR} to find
\be\label{w:eq}
-\pa_t(\vu-\tvu)+\mu\Delta(\vu-\tvu)=B(\Omega t;\vu,\vu-\tvu) + B(\Omega t;\vu-\tvu,\wt \vu) +B(\Omega t;\tvu,\tvu)-\wt B(\Omega t;\tvu,\tvu).
\ee
Recall \eqref{B:full1} and \eqref{wt:B:full1} and define $\cN^c:=\big\{(n, k, m)\in(\bZtno)^3:n+k+m=0\big\}\backslash\cN$. Then rewrite the ``residual''
\begin{align*}
&\qquad B(\Omega t;\tvu,\tvu)-\wt B(\Omega t;\tvu,\tvu)\\
&= \sum_{(n,k,m)\in\cN^c}\sum_{\vec\sigma}\cP_{-n}^{\sigma_1}B(\cP_k^{\sigma_2}\tvu,\cP_m^{\sigma_3}\tvu)\exp(\ri \omvs\Omega t)=\Omega^{-1}\pa_t\vr_\delta-\Omega^{-1}\vr_1,\\
\text{where}\quad \vr_\delta&:=  \sum_{(n,k,m)\in\cN^c}\sum_{\vec\sigma}\cP_{-n}^{\sigma_1}B(\cP_k^{\sigma_2}\tvu,\cP_m^{\sigma_3}\tvu)\,{\exp(\ri \omvs\Omega t)}(\ri \omvs)^{-1}, \\
\text{and}\quad \vr_1 &:=  \sum_{(n,k,m)\in\cN^c}\sum_{\vec\sigma}\cP_{-n}^{\sigma_1}{\pa\over\pa t}\big(B(\cP_k^{\sigma_2}\tvu,\cP_m^{\sigma_3}\tvu)\big){{\exp(\ri \omvs\Omega t)}(\ri \omvs)^{-1}} .
\end{align*}
Let $
\vw:=\vu-\tvu+\Omega^{-1}\vrd
$ 
and recast \eqref{w:eq} into
\be\label{vw:eq}\begin{aligned}
&\quad-\pa_t\vw+\mu\Delta\vw\\
&=B(\Omega t;\vu,\vw)+B(\Omega t;\vw,\wt \vu) 
-\Omega^{-1}\big(\vr_1+{B(\Omega t;\vu,\vrd)} + {B(\Omega t;\vrd,\wt \vu)} -\mu\Delta\vr_\delta\big).
\end{aligned}\ee
By definition \eqref{def:cP} where constant vector $\rvec\ns$ is of unit length, we have, for vector fields $\vu,\vv$,
\[
|\cP_n^\sigma\vu|\le|u_n|\qua{ and }\big|B(\cP_k^{\sigma}\vu,\cP_m^{\sigma'}\vv)\big|\le |u_k|\,|v_m|\,|\cm|.
\]
Recall definition \eqref{def:cN} of   $\cN$ so that, on the complement set $\cN^c$, we have $\left|\om^\vsigma_{ nkm}\right| >\delta=\delta(n,k,m)$. Then by the {\it lower} bound \eqref{de:lower} on $\delta\in(0,1)$,  we find, for any real $\gamma>0$,
\[\begin{aligned}
\Big|\frac{1}{\omvs}\Big|& 
\lesssim_\gamma (|\ck|+|\cm|)^{1+\gamma}\,,\qquad \forall\, (n,k,m)\in\cN^c\,.
\end{aligned}\]
Now, 
for any $\bC^3$-valued $\vh\in L^2(\bTt)$ and any real $ j\ge 0$,  we expand  $\langle\pD^j\vrd,\vh\rangle$ using  the adjoint property \eqref{ad2}; then, using  the above inequalities, we find 
\begin{align*}
 \langle\pD^{j}\vrd,\vh\rangle&\lesssim_{j,\gamma}\hat c_2^{-1}\sumcnkm|\cn|^j(|\ck|+|\cm|)^{1+\gamma}|\wt u_k|\,|\wt u_m|\,|\cm|\,|h_n|.
\end{align*}
Next, switch $k,m$ and add the result to the above, 
applying \eqref{cn:2s} and relaxing the result into two  sums that are equal thanks to $k,m$ symmetry, to find
\begin{align*}
\langle\pD^{j}\vrd,\vh\rangle&\lesssim_{j,\gamma}\hat c_2^{-1}\sumcnkm |\wt u_k|\,|\wt u_m|\,|\cm|^{j+2+\gamma}\,|h_n|,\quad\text{ for real $\gamma>0$}.
\end{align*}
Similarly, we use respectively the definition of $\vr_1$ and the right-hand side of \eqref{m:NR} for $\pa_t\tvu$, and simplify using $k,m$ symmetry where possible, to find, respectively, for $j\ge 0$
\begin{align*}
\langle\pD^{j}\vr_1,\vh\rangle&\lesssim_{j,\gamma}\hat c_2^{-1}\sumcnkm\Big(|(\wt u_t)_k|\,|\wt u_m|\,|\cm|^{  j+2+\gamma} + |\wt u_k|\,|(\wt u_t)_m|\,|\cm|^{  j+2+\gamma}\Big)|h_n|\,,\\
\nonumber\langle\pD^{j}\pa_t\tvu,\vh\rangle&\lesssim_{  j}\sumcnkm |\wt u_k|\,|\wt u_m|\,|\cm|^{j+1}\,|h_n|+\mu\pDn {j+2}\tvu\,\pDn0{\vh}\,.
\end{align*} 
By the above three inequalities, \eqref{tri:inf:fi} and treating $\vh$ as test function, we obtain, for  $s'\in[0,s-3)$,
\begin{subequations}\label{subeq:e}
\begin{align}
\label{e1}\pDn {s'+1}\vrd&\lesssim \hat c_2^{-1}\pDn {\frac32}\tvu\, \pDn {s}\tvu\,,\\
\nonumber\pDn {s'}{\vr_1}&\lesssim \hat c_2^{-1}\pDn{  \frac32}{\pa_t\tvu}\,\pDn {  s-1}\tvu + \pDn {  s-1}{\pa_t\tvu}\,\pDn{  \frac32}\tvu\\
&\lesssim \hat c_2^{-1}\Big(\pDn{  2} \tvu\, \pDn {\frac52}\tvu+\mu\pDn { \frac72}\tvu\Big)\, \pDn {  s-1}\tvu
+\hat c_2^{-1}\Big( \pDn{  2}\tvu\, \pDn s\tvu+\mu\pDn {  s+1}\tvu\Big)\pDn{ \frac32}\tvu\nonumber\\&\le2\hat c_2^{-1}\Big( \pDn{ 2} \tvu\, \pDn s\tvu+\mu\pDn {  s+1}\tvu\Big)\pDn{ \frac32}\tvu,\qquad\text{(by interpolation)}.&
\label{e4}
\end{align}
\end{subequations}
 Here and below the missing constants in the $\lesssim$ notation depend on $s',s$.

For the last three bilinear terms in \eqref{vw:eq}, by the same combination of testing  function, \eqref{cn:2s} (in the spirit of ``endpoint'' estimates) and \eqref{tri:inf:fi}, we  obtain, for real ${s'\in[0},s-3)$ and $\gamma>0$,
\begin{subequations}\label{subeq:B}\begin{align}
\label{B:vw:0}
\pDn {s'}{B(\Omega t;\vw,\wt \vu)}&\lesssim_{s',\gamma} \pDn {s'}\vw\,\pDn {\max\{s'+1,\frac52+\gamma\}}\tvu\,,\\
\pDn {s'}{B(\Omega t;\vu,\vrd)}&\lesssim_{s',\gamma} \pDnb {\max\{s',\frac32+\gamma\}}{\vu}\,\pDn {s'+1}\vrd \,,\\
\pDn {s'}{ B(\Omega t;\vrd,\wt \vu)}&\lesssim_{s',\gamma} \pDn {s'}\vrd\,\pDn {\max\{s'+1,\frac52+\gamma\}}\tvu\,.
\end{align} 
\end{subequations}
(Note: in proving them, one can consider the cases $s'<\frac32,s'=\frac32,s'>\frac32$.)

Take the $L^2$ inner product  of $ \pD^{s'}\!\eqref{vw:eq}$ with $\pD^{s'}\!\vw$ for $s'$ in the range prescribed in Theorem \ref{thm:local:e}. In the resulting right-hand side,  the first term vanishes if $s'=0$ and is bounded using \eqref{comm}(i) if $s'\ge1$. Also, bound the rest using \eqref{subeq:e}, \eqref{subeq:B}. Then we arrive at
\[\begin{aligned}
\frac{\rd}{\rd t}\pDn {s'}{\vw}^2+2\mu\pDn {s'+1}{\vw}^2&\lesssim \big(\pDn {s}\tvu+\pDn {s^\flat}\vu\big)\,\pDn {s'}{\vw}^2+\\
&\;\;\hat c_2^{-1}\Omega^{-1}\big(\pDn {s}\tvu+\pDn {s^\flat}\vu\big)\pDn{s}\tvu\pDn{\frac32}\tvu\pDn {s'}{\vw}+\hat c_2^{-1}\Om^{-1}\mu \pDn {s}\tvu\pDn{\frac32}\tvu\pDn {s'+1}{\vw}.
\end{aligned}
\]
Relax both $O(\Om^{-1})$ terms using Young's inequality so that part of the result cancels the $O(\mu)$ term on the left and part of the result is absorbed into the first term on right,
\be\label{w:d:ineq}
{\rd\over\rd t}\pDn {s'}{\vw}^2
\lesssim\big(\pDn {s}\tvu+\pDn {s^\flat}\vu\big)\pDn {s'}{\vw}^2 +\hat c_2^{-2} \Omega^{-2}\Big[\big(\pDn {s}\tvu+\pDn {s^\flat}\vu\big)\pDn{s}\tvu^2 +\mu\pDn {s}\tvu^2\Big]\pDn{\frac32}\tvu^2.
 \ee
Recall $\vw=\vu-\tvu+\Omega^{-1}\vrd$, so by \eqref{e1} we have
$
\pDn {s'}{\vw(0,\cdot)}^2-\pDn{s'}{\vU_0-\tvU_0}^2\lesssim \Omega^{-2}.
$ 
Then,  integrate the above in time and apply local estimates \eqref{classic:e:e} to show 
\[
\pDn {s'}{\vw(t,\cdot)}^2-\pDn{s'}{\vU_0-\tvU_0}^2\lesssim_{E_0}\int_0^t \pDn {s'}{\vw(t_1,\cdot)}^2\,\rd t_1+\hat c_2^{-2}\Omega^{-2}\,,\qua{\, for}t\in[0,c/E_0^{\frac12}].
\] 
Solve this Gr\"onwall's inequality and apply \eqref{e1} again to prove the required bound on $\vu-\tvu=\vw-\Omega^{-1}\vrd$ which of course holds also for $\vU-\tvU$.
\end{proof}

\begin{remark} \label{re:global:sol}It then requires minimal effort to prove the global-in-time solvability of RNS equations for any rotation rate $\Omega$ above a threshold value that only depends on viscosity $\mu$ and the size and type of the norm of the initial datum $\vU_0$. First, for $\vU_0$ from a sufficiently regular function space, we combine assumptions of Theorems \ref{thm:NR}, \ref{thm:local:e} so that the bandwidth $\delta(n,k,m)$ now satisfies two-sided bounds \eqref{de:up} and \eqref{de:lower}. Further assume 
\[
s^\flat=s'\qqua{and}\vU_0=\tvU_0 \in H^{s+1}(\bTt)\,,
\] 
so by Theorem \ref{thm:NR}, $\pDn {s+1}{\tvu(t,\cdot)}$ is uniformly bounded for all $t\ge 0$.
Now, for as long as
\be\label{vu:100}
\pDn{s'}\vu\le 100\pDn{s}{\tvu},
\ee we deduce from \eqref{w:d:ineq} that
\[
{\rd\over\rd t}\pDn {s'}{\vw}^2
\lesssim101\pDn {s}\tvu\pDn {s'}{\vw}^2 + \Omega^{-2}\Big[101\pDn{s}\tvu^3 +\mu\pDn {s}\tvu^2\Big]\pDn{\frac32}\tvu^2.
\]
By \eqref{NR:en:L2} and $\|\tvu\|_{L^2} \le \|\nabla\tvu\|_{L^2}$ due to $\tvu$ being zero-mean, we have $\|\tvu(t,\cdot)\|_{L^2}$ decays exponentially.  Interpolating this and the above uniform $H^{s+1}(\bTt)$ bound, we find the factor $101\pDn {s}\tvu$ of the above differential inequality can be integrated for $t\in[0,\infty)$ when used as an integrating factor.  Therefore, in view of \eqref{e1}, the assumed $\vU_0=\tvU_0$ and $\vw=\vu-\tvu+\Omega^{-1}\vrd$, we can find a small enough threshold for $\Omega^{-1}$ below which the upper bound on $\vw$ stays low enough to ensure \eqref{vu:100} stays valid for all $t\ge0$, hence making the RNS equations solvable in the $H^{s'}(\bTt)$ space for all $t\ge0$. Secondly, if $\vU_0$ is not regular enough to be covered by the first case, we use the local-in-time smoothing property to bootstrap the solution's regularity. For instance, if $\vU_0\in H^s(\bTt)$ for $s>\frac12$, then by the usual $H^s(\bTt)$ energy method followed by applying \eqref{comm}(ii) on the nonlinear term and interpolating the $H^{\frac32}$ norm in the result, we have
\[
{\rd\over\rd t}\pDn {s}{\vU}^2+2\mu\pDn{s+1}{\vU}^2\le C_s \pDn{s+1}{\vU}^{1+\beta}\pDn{s}{\vU}\pDn{0}{\vU}^{1-\beta},
\]
for some $\beta=\beta(s)\in(0,1)$.
Apply Young's inequality to relax the right-hand side to the sum of $\mu\pDn{s+1}{\vU}^2$ and some finite power of the rest, noting $\pDn{0}\vU$ is non-increasing in time, to obtain a finite bound on $\sup_{t\in[0,t_1]}\pDn{s}\vU$ for some $t_1>0$. This then implies a finite bound on $\mu\int_0^{t_1}\pDn{s+1}\vU^2\rd t$ and thus a finite bound on $\pDn{s+1}\vU^2(t')$ for some $t'\in[0,t_1]$. All these bounds only depend on $\mu,s$ and $\pDn{s}{\vU_0}$. Repeat such process to achieve a sufficiently regular solution $\vU(t,\cdot)$ at some $t>0$, after which time the proof is the same as the above first case.
\end{remark}

\appendix

 \section{\bf Counting Integer Points Bounded by Jordan Curves}\label{s:j:c}

 Inspired by \cite{JS:integer} on estimating integer points inside a planar Jordan curve, we provide the following self-contained proof on such counting problem in a set separated by multiple disjoint Jordan curves. Then, the set in question may overlap either the interior or exterior of an individual Jordan curve. 
 
 Let $cl(\cdot)$ denote the set closure operator.  A curve is simple if it does not cross itself. A curve is rectifiable if it has  finite length. Recall the area of a bounded open set in $\bR^2$, i.e. its Lebesgue measure, is well defined because it is Borel measurable and hence Lebesgue measurable.

A Jordan curve is a simple closed curve in the plane $\bR^2$, namely, it is the image of an injective continuous map of a unit circle into the plane. The Jordan curve theorem, seemingly intuitive but requiring nontrivial proofs,  states that  for any planar Jordan curve $\jc$, its complement $\bR^2\backslash\jc$ consists of two path-connected open subsets called ``exterior'' and ``interior'' regions so that the exterior region consists of all points path-connected to points that are arbitrarily and sufficiently far away from $\jc$, and also, $\jc$ is the boundary of each region.  Note that the ``topological exterior'' of a general set is a different notion which we do not use here. An even more nontrivial Jordan-Schoenflies theorem further asserts that for a Jordan curve as a continuous injection  from unit circle to $\bR^2$, the domain of that injection can be extended so that it is a homeomorphism  of $\bR^2$.
 
 Note: we will use notation $\jc$ whenever the curve is explicitly known as a Jordan curve.
 
 We call  $\gamma$ an ``arc'' of a Jordan curve $\jc$ if it is the image of a closed, positive-lengthed, sub-interval of the unit circle under the same mapping that defines $\jc$.
  
Now, consider 
\be\label{S:intro}
\left.\begin{aligned}&\text{bounded open set }\emptyset\ne S \subset\bR^2 \text{ \,  so that \, } \pa S =\bigcup\limits_{\jc\in{\boldsymbol{\mathcal J}}}\jc\,,\\ 
&\text{where finite set $\Jset$ consists of {\bf disjoint}, rectifiable Jordan curves.}
\end{aligned}\right\}
\ee
 For an integer point $\bp=(x, y)\in\bZ^2$, define the open square box
\[
B _{\bp}:=(x-\tfrac12, y+\tfrac12)\times (x-\tfrac12, y+\tfrac12).
\]
 For any Jordan curve $\jc\in\Jset$, an arc $\gamma\subset \jc\cap cl(B _\bp)$ is called ``maximal'' relative to $B _\bp$ if 
\be\label{g:q12}
\gamma\cap B _\bp \text{ is non-empty, path-connected}\qua{ and }\gamma\cap\pa B _\bp  =\{\bq_1,\bq_2\}  ,
\ee
for points $\bq_1, \bq_2$ which we shall call the  endpoints of $\gamma$. It is possible $\bq_1=\bq_2$. 
Note that a given box $B_\bp$ can break up a Jordan curve $\jc$ into several maximal arcs, but our proof below will be localised entirely within $B_\bp$, assuming no knowledge of such break-ups. See Figure \ref{fig:J}.

 Any point of $B _\bp\cap\pa S $ that is not on a maximal arc   must be on an ``interior'' Jordan curve i.e. one that is entirely in $B _\bp$. 
 In other words,  there exists a set  $\Gset_{\!\bp}$ of curves 
so that
\be\label{J:Q}
\begin{aligned}
&B _\bp\cap\pa  S  = \bigcup_{\gamma \in \Gset_{\!\bp}}\Big(\gamma\backslash\big\{\text{its endpoint(s)}\big\}\Big)  \\
&\text{where each $\gamma\in\Gset_{\!\bp}$ is either a maximal arc or an interior Jordan curve.}
\end{aligned}
\ee
There are countably many\footnote{\,Consider summability and $\bR^+=\cup_{j\in \bZ}[2^{-j},2^{-j+1})$.  } 
such $\gamma$'s, and their total length $Len(B _\bp\cap\pa S )$ is defined and finite.
An element of $\Gset_{\!\bp}$ can only intersect $\pa B _\bp$ at 0,1 or 2 points. This then excludes any positive-length part of $\pa B _\bp \cap \pa S$ from our proofs, which is OK as far as the {\it upper bound} in Lemma \ref{lem:m:Jordan} is concerned. The finiteness of $\Jset$ however may be lost. For example, consider $\bp=(0,0)$ and this modified version of ``topologist's sine curve'' 
$y=\big|\sin(1/x)\big|\,x^2-\tfrac12$ in the $x-y$ plane.

Figure \ref{fig:J} helps visualisation of the following notions. For points $\bp,\bq$, let $\overline{\bp\bq}$ denote the {\it straight} line segment bounded by and including $\bp,\bq$. For any two points { $\bq_1,\bq_2\in \pa B _\bp$}, let $\wh{\bq_1\bq}_2$ denote the {\it piecewise linear} segment of the box's edge $\pa B _\bp$ that is bounded by and including $\bq_1,\bq_2$ so that its length is less than or equal to that of the rest of $\pa B _\bp$. 
For  a maximal arc   $\gamma$ relative to $B _\bp$ with endpoints $\bq_1,\bq_2$,   let $\Omega_\gamma$ denote the {interior} region of the {\it concatenated} Jordan curve $\gamma \cup \wh{\bq_1\bq}_2$. For  an interior Jordan curve $\jc$ in $B_\bp$, naturally let  $\Omega_\jc$ denote the interior region of $\jc$. 

\begin{proposition}\label{prop:L1}
For any $\gamma\in\Gset_{\!\bp}$, the following two statements hold.
\be\label{A:L}
Len(\gamma) < 1\qqua{implies}Area(\Omega_\gamma) <  Len(\gamma).
\ee
 \be\label{ma:notL1}
\bp \in cl(\Omega_\gamma) \; \text{ for a maximal arc }\gamma   \qua{implies} Len(\gamma) \ge1.
 \ee
\end{proposition}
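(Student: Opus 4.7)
The plan is to establish \eqref{A:L} via the classical planar isoperimetric inequality applied to the Jordan boundary of $\Omega_\gamma$, and to establish \eqref{ma:notL1} via a direct geometric separation argument combined with the distance lower bound $|\bp - q| \ge \tfrac12$ for any $q \in \pa B_\bp$. For \eqref{A:L}, recall that any bounded open planar region with rectifiable Jordan boundary $C$ satisfies $Area \le Len(C)^2/(4\pi)$. If $\gamma$ is an interior Jordan curve in $B_\bp$, then $\pa \Omega_\gamma = \gamma$ and $Len(\gamma) < 1$ directly gives $Area(\Omega_\gamma) \le Len(\gamma)^2/(4\pi) < Len(\gamma)$. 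If $\gamma$ is a maximal arc with endpoints $\bq_1, \bq_2 \in \pa B_\bp$, the perimeter of $\Omega_\gamma$ is $Len(\gamma) + Len(\wh{\bq_1\bq_2})$, and the key subclaim is $Len(\wh{\bq_1\bq_2}) \le \sqrt{2}\,Len(\gamma)$, which follows by cases on the edges containing $\bq_1, \bq_2$: the same-edge case gives $Len(\wh{\bq_1\bq_2}) = |\bq_1 - \bq_2| \le Len(\gamma)$; the opposite-edges case would force $|\bq_1 - \bq_2| \ge 1 > Len(\gamma)$, ruling that case out; the adjacent-edges case with shared corner $c$ gives $Len(\wh{\bq_1\bq_2}) \le |\bq_1 - c| + |c - \bq_2| \le \sqrt{2}\,|\bq_1 - \bq_2| \le \sqrt{2}\,Len(\gamma)$ by Pythagoras and AM--QM, together with a quick check that this corner-route is indeed the shorter half of $\pa B_\bp$ since its length stays well under $2$. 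The isoperimetric estimate then yields $Area(\Omega_\gamma) \le (1+\sqrt{2})^2\,Len(\gamma)^2/(4\pi) < Len(\gamma)$, which holds for any $Len(\gamma) < 4\pi/(1+\sqrt{2})^2 \approx 2.16$, comfortably above $1$.

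For \eqref{ma:notL1}, note that $\bp$ being the integer centre of $B_\bp$ lies at distance $\ge \tfrac12$ from $\pa B_\bp$, so in particular $\bp \notin \pa B_\bp$ and the hypothesis $\bp \in cl(\Omega_\gamma)$ reduces to $\bp \in \gamma$ or $\bp \in \Omega_\gamma$. The first subcase gives $Len(\gamma) \ge |\bq_1 - \bp| + |\bp - \bq_2| \ge \tfrac12 + \tfrac12 = 1$ immediately. For $\bp \in \Omega_\gamma$, the main subcase, the crucial geometric fact is that $\wh{\bq_1\bq_2}$ being the shorter half of the perimeter-$4$ curve $\pa B_\bp$ forces $\pa B_\bp \setminus \wh{\bq_1\bq_2}$ to have length $\ge 2$, and any arc of $\pa B_\bp$ of length $\ge 2$ must contain at least one full side $\ell$ of $B_\bp$. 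Let $m$ be the midpoint of $\ell$, so $|\bp - m| = \tfrac12$. The segment $\overline{\bp m}$ lies in $cl(B_\bp)$ with interior in $B_\bp$, and since $m \in \ell$ is disjoint from $\wh{\bq_1\bq_2}$, the only way $\overline{\bp m}$ can exit $\Omega_\gamma$ (which it must, as $m \notin cl(\Omega_\gamma)$) is by crossing $\gamma$ at some point $p_1$. A brief case check, normalising $\bp$ to the origin and $\ell$ to the top edge so that $p_1 = (0, t/2)$ for some $t \in [0,1]$, yields $|\bq_i - p_1| \ge \tfrac12$ for $i = 1, 2$ in each eligible position of $\bq_i$ (on the left, right, or bottom edge, or at either top corner), whence $Len(\gamma) \ge |\bq_1 - p_1| + |p_1 - \bq_2| \ge 1$.

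The delicate step in \eqref{ma:notL1} is verifying that $\overline{\bp m}$ genuinely crosses $\gamma$ rather than leaving $\Omega_\gamma$ through some other part of $\pa \Omega_\gamma$. This hinges on the general fact that $\Omega_\gamma \subset B_\bp$, since the bounded interior of a Jordan curve in $cl(B_\bp)$ cannot meet $\pa B_\bp$ (an open neighbourhood of such a meeting point would exit $cl(B_\bp)$), so $\pa \Omega_\gamma \cap \pa B_\bp = \wh{\bq_1\bq_2}$ and the interior of $\overline{\bp m}$, lying in $B_\bp$, avoids $\wh{\bq_1\bq_2}$ entirely. The degenerate case $\bq_1 = \bq_2$, in which $\gamma$ is itself a closed loop based at one boundary point and $\wh{\bq_1\bq_2}$ collapses to that single point, is handled by the same segment argument and yields $Len(\gamma) \ge 2\,|p_1 - \bq_1| \ge 1$ by summing the lengths of the two sub-arcs of $\gamma$ between $p_1$ and $\bq_1$.
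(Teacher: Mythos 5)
Your proposal is correct. For \eqref{A:L} it coincides with the paper's argument: isoperimetric inequality for interior Jordan curves, and for a maximal arc the bound $Len(\wh{\bq_1\bq}_2)\le\sqrt2\,Len(\gamma)$ (your edge-by-edge case analysis is in fact a little more careful than the paper's one-line ``simple geometry'', whose inequality $Len(\wh{\bq_1\bq}_2)\le\sqrt2\,Len(\overline{\bq_1\bq}_2)$ is only valid because the opposite-edge configuration is excluded by $Len(\gamma)<1$, exactly as you note). For \eqref{ma:notL1} the underlying idea is the same -- use $dist(\bp,\pa B_\bp)=\tfrac12$, produce a point of $\pa B_\bp$ off $\wh{\bq_1\bq}_2$ lying in the exterior of $\pa\Omega_\gamma$, extract a crossing point of $\gamma$ on the segment joining $\bp$ to it, and bound $Len(\gamma)$ below by the two distances from the crossing point to $\bq_1,\bq_2$ -- but your execution differs from the paper's. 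The paper argues by contradiction: assuming $Len(\gamma)<1$ it observes $\wh{\bq_1\bq}_2$ meets at most two sides of the box, so the antipode $\bq_{opp}$ of $\bq_1$ about $\bp$ avoids $\wh{\bq_1\bq}_2$; the crossing point $\bq_{arc}$ on $\overline{\bp\bq_{opp}}$ is then collinear with $\bq_1$ and $\bp$, and the bound $Len(\gamma)\ge|\bq_1-\bq_{arc}|+|\bq_{arc}-\bq_2|\ge|\bq_1-\bp|+|\bp-\bq_2|\ge1$ drops out of one triangle inequality. You instead argue directly: since $Len(\wh{\bq_1\bq}_2)\le2$ always, the complementary arc contains a full side, and you shoot the segment from $\bp$ to its midpoint, finishing with a coordinate case check that every admissible position of $\bq_1,\bq_2$ (off the relative interior of that side) is at distance $\ge\tfrac12$ from the crossing point. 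Your route avoids the paper's preliminary ``at most two sides'' step and the reflection trick, at the cost of the explicit case check and the (correctly supplied) verifications that $\Omega_\gamma\subset B_\bp$, that $\bq_1,\bq_2$ cannot lie in the interior of the free side, and the separate treatment of the degenerate case $\bq_1=\bq_2$; the paper's collinearity device makes the final estimate shorter but is tied to the proof-by-contradiction setup. Both are sound.
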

Intuitively, \eqref{A:L} helps the proof of Lemma \ref{lem:m:Jordan}  in the sense that if we don't already have $Len(\gamma) \ge1$ to cover the 1 integer count of $\bp$, then for every   $Area(\Omega_\gamma)$ removed from $Area(B_\bp)=1$ (thus not contributing to $Area(S)$), compensation is provided via the contribution of $Len(\gamma)$ to $Len(\pa S)$. Also, \eqref{ma:notL1} highlights the challenging case of $\bp \notin cl(\Omega_\gamma)$.
\begin{proof}
 
Statement \eqref{A:L} for an {\it interior} Jordan curve $\gamma$ follows from the isoperimetric inequality.

Consider  maximal arc  $\gamma$ relative to $B _\bp$ with endpoints $\bq_1,\bq_2$. By simple geometry, we have
\[
Len(\wh{\bq_1\bq}_2) \le \sqrt 2\, Len(\overline{\bq_1\bq}_2) \le \sqrt 2 \, Len(\gamma).
\]
Then by the isoperimetric inequality and $Len(\pa\Om_\gamma)=Len(\gamma)+Len(\wh{\bq_1\bq}_2) $, the above leads to 
\[
Area(\Omega_\gamma) < \tfrac{(1+\sqrt2)^2}{4\pi}\left(Len(\gamma)\right)^2,
\] hence proving \eqref{A:L}.

For the proof of \eqref{ma:notL1}, assume $\bp \in cl(\Omega_\gamma)$ 
but suppose instead $Len(\gamma) < 1$. Then $\wh{\bq_1\bq}_2$ overlaps with at most 2 sides of $\pa B _\bp$,  with the understanding that overlapping with a corner of $B$ is on 2 sides.
Let $\bq_{opp}$ be the point that is symmetric to $\bq_1$ about the centre $\bp$. See Figure \ref{fig:J}. The overlapping argument we just made then implies $\bq_{opp} \in \pa B _\bp \backslash \wh{\bq_1\bq}_2$, so $\bq_{opp}$ is in the exterior region of  the concatenated Jordan curve { $\pa\Omega_\gamma$} since one can path-connect it to any point outside the box without touching  { $\pa\Omega_\gamma$}. Then, by $\bp\in cl(\Omega_\gamma)$  and the Jordan curve theorem, the closed line segment $\overline{\bp\bq_{opp}}$ includes a point $\bq_{arc}\in\overline{\bp\bq_{opp}}\cap \gamma\cap B_\bp$ (may not be unique). See Figure \ref{fig:J}. Then 
\[
Len(\overline{\bq_1\bq_{arc}})=Len(\overline{\bq_1\bp})+Len(\overline{\bp\bq_{arc}}).
\]  Helped by the geodesic nature of straight lines  and $dist(\bp,\pa B_\bp)=\frac12$, this implies: 
\[
Len(\gamma)\ge Len(\overline{\bq_1\bq_{arc}}) + Len(\overline{\bq_{arc}\bq_2})\ge Len(\overline{\bq_1\bp}) + Len(\overline{\bp\bq_2}) \ge 1.
\]
Contradiction to $Len(\gamma)<1$ supposed before!
\end{proof}

\begin{figure}[h]
\centering
\hspace{-20mm}
\includegraphics[width=0.45\textwidth]{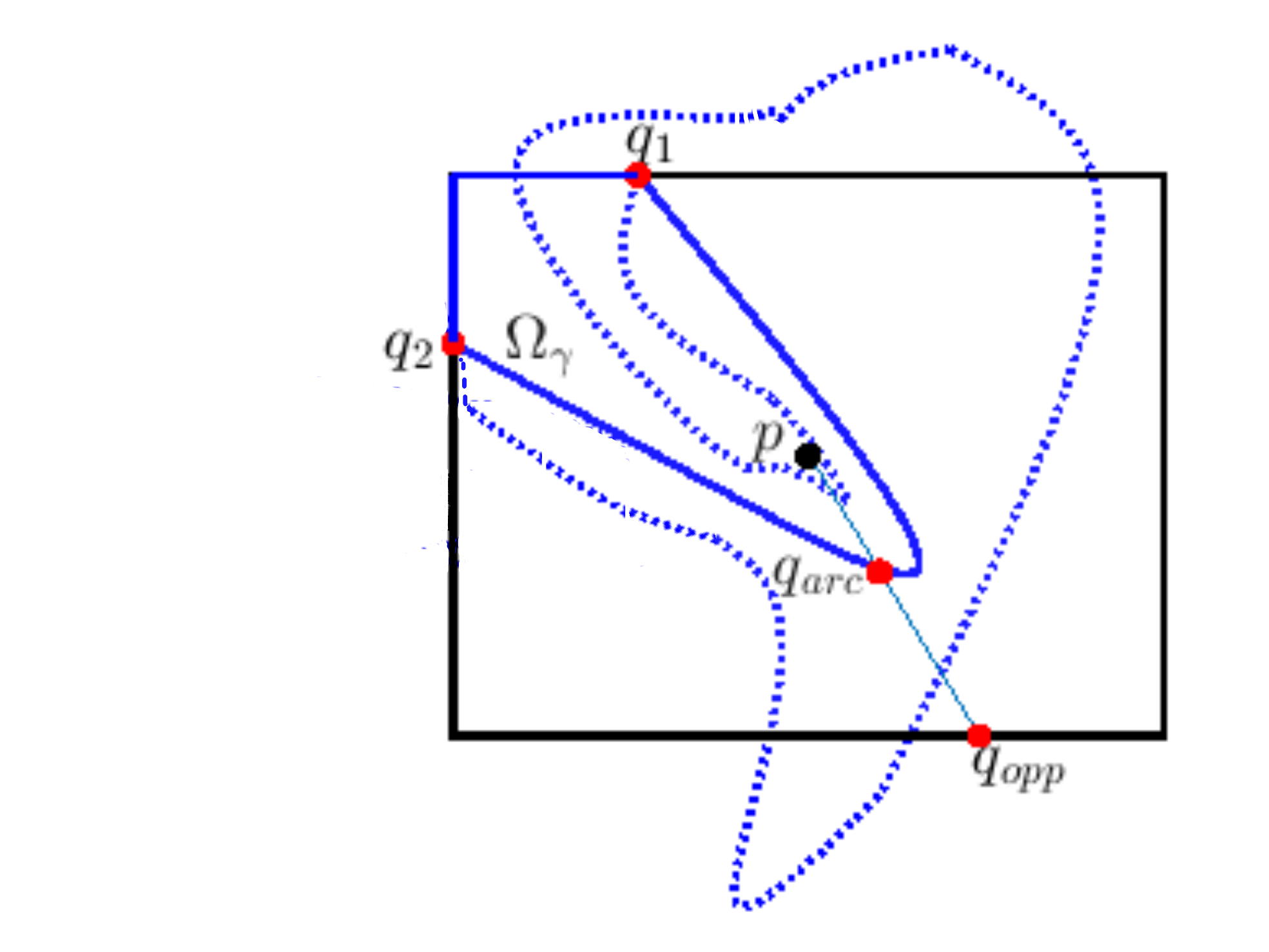}
\caption{\small The boundary of $\Omega_\gamma$ is in solid blue. The maximal arc $\gamma$ (solid blue curve inside the box) and the dotted blue curve are joined to form a Jordan curve from $\Jset$. The position of the dotted blue curve is immaterial to our proofs concerning $\Omega_\gamma$.  }
\label{fig:J}
\end{figure}

In the following main result on planar integer point counting, we have to address exceptional cases where an integer point is surrounded by a possibly very  small Jordan curve.

\begin{lemma}\label{lem:m:Jordan}
For open set $ S $ introduced in \eqref{S:intro}, we have  
\[
\# \big\{\bZ^2\cap cl( S )\big\} \le Area( S ) +  Len(\pa S)+| E|
\]
with the exceptional set  
\[
\begin{aligned}
E:=\big\{\bp\in \bZ^2\cap cl( S )\,: \,&\textnormal{ there exists an \underline{interior} Jordan curve }\jc\in\Jset \textnormal{ so that} \\
&\;\,\bp\in cl(\Omega_\jc)\textnormal{\, and \,}Area(\jc)+Len(\jc) <  1 \big\}.
\end{aligned}
\]
\end{lemma}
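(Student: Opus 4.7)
The plan is to prove a per-square local estimate and then sum. For each lattice point $\bp \in \bZ^2$, set $M(\bp) := Area(B_\bp \cap S) + Len(B_\bp \cap \partial S)$. Because the closed unit squares $cl(B_\bp)$, $\bp \in \bZ^2$, tile $\bR^2$ with overlaps only on a measure-zero grid,
\[
\sum_{\bp \in \bZ^2} Area(B_\bp \cap S) = Area(S), \qquad \sum_{\bp \in \bZ^2} Len(B_\bp \cap \partial S) \le Len(\partial S),
\]
so if one can show that $M(\bp) \ge 1$ for every $\bp \in \bZ^2 \cap cl(S) \setminus E$, summation yields $\#(\bZ^2 \cap cl(S)) - |E| \le Area(S) + Len(\partial S)$, which is the claim.

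Fix $\bp \in \bZ^2 \cap cl(S) \setminus E$ and assume $Len(B_\bp \cap \partial S) < 1$ (otherwise $M(\bp) \ge 1$ is immediate). Every $\gamma \in \Gset_{\!\bp}$ then has $Len(\gamma) < 1$, and Proposition~\ref{prop:L1} supplies (a) $Area(\Omega_\gamma) < Len(\gamma)$ for every $\gamma \in \Gset_{\!\bp}$, and (b) $\bp \notin cl(\Omega_\gamma)$ whenever $\gamma$ is a maximal arc. Call $\gamma$ \emph{enclosing} for $\bp$ if $\bp \in cl(\Omega_\gamma)$; by (b) any enclosing curve is an interior Jordan curve lying wholly inside $B_\bp$.

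I then split into two cases. \emph{Non-enclosed case:} no $\gamma$ is enclosing, so $\bp$ lies off every curve and the connected component $R_\ast$ of $B_\bp \setminus \partial S$ that contains $\bp$ satisfies $R_\ast \subset S$ (since $\bp \in cl(S)$ and $\bp \notin \partial S$). The structural claim I need is that every connected component $V$ of $B_\bp \setminus cl(S)$ is contained in $\Omega_\gamma$ for some non-enclosing $\gamma \in \Gset_{\!\bp}$. Granting this, (a) gives
\[
Area(B_\bp \setminus cl(S)) \le \sum_{\gamma \in \Gset_{\!\bp}} Area(\Omega_\gamma) < \sum_{\gamma \in \Gset_{\!\bp}} Len(\gamma) = Len(B_\bp \cap \partial S),
\]
and since $\partial S$ has zero planar measure, $M(\bp) = 1 - Area(B_\bp \setminus cl(S)) + Len(B_\bp \cap \partial S) > 1$. \emph{Enclosed case:} some enclosing interior Jordan curve $\jc \in \Gset_{\!\bp}$ exists, and $\bp \notin E$ forces $Area(\Omega_\jc) + Len(\jc) \ge 1$. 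One checks that the component of $\Omega_\jc \setminus \partial S$ accumulating at $\bp$ lies in $S$ --- otherwise $\bp \in \partial S$ and one relabels $\bp$ as a point on $\jc$ to revert to the non-enclosed picture --- and an application of (a) to each further ``hole'' curve nested inside $\Omega_\jc$ shows that every local subtraction from $Area(B_\bp \cap S)$ is outweighed by a strictly larger contribution to $Len(B_\bp \cap \partial S)$, giving $M(\bp) \ge 1$.

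The principal obstacle will be the structural claim in the non-enclosed case. Because the curves in $\Gset_{\!\bp}$ are pairwise disjoint and finite in number, the Jordan--Schoenflies theorem permits one to partition $cl(B_\bp)$ into finitely many closed regions whose boundaries are built from arcs of $\Gset_{\!\bp}$ and arcs of $\partial B_\bp$; each component $V$ of $B_\bp \setminus cl(S)$ then has an outer Jordan boundary formed by at most one curve $\gamma \in \Gset_{\!\bp}$ together with, if $V$ abuts $\partial B_\bp$, a single arc of $\partial B_\bp$. In the former sub-case $V$ is enclosed by an interior Jordan curve; in the latter, $\gamma$ is a maximal arc and $V \subset \Omega_\gamma$ by the definition of $\Omega_\gamma$. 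That the enclosing $\gamma$ is non-enclosing for $\bp$ follows from $\bp \in R_\ast$ and the separation of $R_\ast$ from $V$ by $\gamma$. The bookkeeping in the enclosed case is a similar but more intricate iteration of the same topology plus Proposition~\ref{prop:L1}.
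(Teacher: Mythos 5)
Your global strategy is the same as the paper's: reduce to the per-square estimate $Area(B_\bp\cap S)+Len(B_\bp\cap\pa S)\ge 1$ for every $\bp\in\bZ^2\cap cl(S)\setminus E$, dispose of the case where $\bp$ lies in some $cl(\Omega_\gamma)$ via Proposition \ref{prop:L1} and the definition of $E$, and in the remaining case convert curve length into area via \eqref{A:L}. The problem is that what you call the ``structural claim'' --- that $B_\bp\setminus cl(S)$ is covered by the regions $\Omega_\gamma$ of non-enclosing curves --- is not a technical afterthought: it is precisely the non-trivial content of the lemma, equivalent to the identity $S_{path}=B_\bp\setminus\bigcup_{\gamma\in\Gset_{\!\bp}}cl(\Omega_\gamma)$ that occupies essentially the whole of the paper's proof (the path-following argument with the Jordan--Schoenflies theorem, the lakes-of-Wada caveat, and the finite-termination argument). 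Your sketch of it does not survive scrutiny. First, $\Gset_{\!\bp}$ need not be finite: a single rectifiable Jordan curve of $\Jset$ can produce countably many maximal arcs inside one box (the paper's modified topologist's sine curve), and total length $<1$ does not prevent this, so ``partition $cl(B_\bp)$ into finitely many closed regions'' is not available as stated. Second, the asserted boundary structure of a component $V$ of $B_\bp\setminus cl(S)$ --- ``at most one curve of $\Gset_{\!\bp}$ together with a single arc of $\pa B_\bp$'' --- is false in simple examples (a hole pinned to one edge of the box can be bounded by two maximal arcs and two edge segments). Third, even when $V$ abuts a single maximal arc $\gamma$ and $\pa B_\bp$, the inclusion $V\subset\Omega_\gamma$ does not follow ``by definition'': $\Omega_\gamma$ is the interior of $\gamma$ concatenated with the \emph{shorter} boundary path $\wh{\bq_1\bq}_2$, and $V$ may lie on the other side of $\gamma$; that it is then caught by some \emph{other} $\Omega_{\gamma'}$ (or at least by the union of the $cl(\Omega_{\gamma'})$'s, which is all the area bound needs) is exactly what has to be proved, and cannot be read off from local pictures because several components of $B_\bp\setminus\pa S$ may share boundary.

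Two smaller points. In the enclosed case the paper needs only one line ($\bp\notin E$ gives $Area+Len\ge 1$ for the enclosing interior Jordan curve, which yields \eqref{AL1}), whereas your version introduces an iteration over nested ``hole'' curves that is left entirely vague, and the manoeuvre ``relabel $\bp$ as a point on $\jc$ to revert to the non-enclosed picture'' is not meaningful: if $\bp\in\pa S$ then $\bp\in cl(\Omega_\gamma)$ for the curve through it, and that case is already settled by \eqref{ma:notL1} or by $\bp\notin E$, so no relabelling is needed or legitimate. The summation step and the reduction to $Len(B_\bp\cap\pa S)<1$ are fine. In short, the proposal reproduces the paper's framework but leaves its topological core --- the coverage statement --- unproved, and the heuristics offered in its place contain concrete errors; to complete the argument you would essentially have to reconstruct the paper's $S_{path}$ argument (or an equivalent) in full.
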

\begin{proof}

It suffices to show, for any $\bp\in\big(\bZ^2\cap cl( S )\big)\backslash E$ and $B:=B _\bp$, we have
\[
B \cap \pa S\ne\emptyset\qqua{implies}Area(B  \cap  S ) + Len(B \cap\pa S ) \ge 1.
\]
 Recall \eqref{J:Q} and the text leading to it. Let $\Gset:=\Gset_{\! \bp}$. The above then amounts to:
\be\label{AL1}
B \cap \pa S\ne\emptyset\qqua{implies}Area(B  \cap  S ) +\sum_{\gamma \in \Gset} Len( \gamma ) \ge 1.
\ee 

  If $\bp\in cl(\Omega_\jc)$ for an {\it interior} Jordan curve $\jc\in\Gset$, then  $Area(\jc)+Len(\jc)\ge1$ due to $\bp\notin E$, which proves \eqref{AL1}.
If $\bp\in cl(\Omega_\gamma)$ for maximal arc $\gamma\in\Gset$,  then  \eqref{AL1} follows from \eqref{ma:notL1}.

Therefore from now on, for such $\bp\in\bZ^2\cap cl( S )$, we assume  
\be\label{bp:notin}
\bp \notin  cl(\Omega_\gamma)\qua{ for any} \gamma \in \Gset\,,\qua{ so}\bp\in S .
 \ee
 
 Define
 \[
 S_{path}=\big\{\,\bp_{in} \in B  \backslash \pa  S \big|\,\exists \text{ a path, disjoint from $\pa S  \cup \pa B $, that connects $\bp$ to $\bp_{in}$}\big\}\cup\big\{\bp\big\},
 \] 
which is open since any closed path in $\bR^2\backslash(\pa S \cup \pa B)$ is positively distanced from $\pa  S \cup \pa B $. 
For any point $\bp_{out}\notin  S $, we consider any path that connects $\bp$ to $\bp_{out}$ and is parametrised by $\bpsi:[0,1]\mapsto\bR^2$. Then $\bpsi(0)=\bp\in S$ and the openness of $S$ implies that  $\big\{t\in(0,1]\,\big|\,\bpsi(t)\subset  S \big\}\ne \emptyset$, so its supremum corresponds to a point on $\pa S $. By the arbitrariness of $\bpsi$, we find $\bp_{out}\notin S_{path}$,  thus
\be\label{A1}
S_{path}\subset  B \cap S.
\ee

 Next, by the definition of $S_{path}$, \eqref{bp:notin} and Jordan curve theorem,  we have
\be\label{S:B}
S_{path} \subset B  \backslash \Big(\bigcup_{\gamma \in \Gset }cl(\Omega_\gamma)\Big).
\ee
 On the other hand, it is an intuitive but non-trivial statement to assert that the two sides above are equal, as suggested by the existence of ``lakes of Wada'', three non-overlapping, connected open sets that share the same boundary! To this end, take any point 
 \be\label{bp1:BA}
 \bp_1\in B \backslash \Big(\bigcup_{\gamma \in \Gset }cl(\Omega_\gamma)\Big)
 \ee
 and we prove $\bp_1\in S_{path}$ as follows. 
 In fact, parametrise the line segment $\overline{\bp\bp_1}$ as
 \[
 \bpsi(t)=(1-t)\bp+t\bp_1\qua{ for } t\in[0,1].
 \]
Since $\bpsi(0)=\bp\in S_{path}$ which is open, the following  is defined:
\[
t^-:=\sup\big\{t\in(0,1)\,\big|\,\bpsi\big([0,t]\big)\cap \pa  S  = \emptyset\big\}>0.
\]  
Then $\bpsi(t^-)$ satisfies one and only one of the following.
\begin{itemize}[leftmargin=*]
\item[(i)]
$\bpsi(t^-)\notin \pa  S $. Then the maximality of $t^-$ and the closedness of $\pa S$ implies  $t^- = 1$. Thus, $\bp$ and $\bp_1$ are connected by line segment $\bpsi\big([0,1]\big)$ which does not intersect $\pa  S  \cup \pa B $, proving $\bp_1\in S_{path}$.\medskip

\item[(ii)] $\bpsi(t^-)\in \gamma^\bpsi $ for some $\gamma^\bpsi  \in \Gset$. Then by definition of $S_{path}$ and \eqref{S:B}, we have
\be\label{t-:e}
\text{$\bpsi\big([0,t^-)\big)\subset S_{path}$,\, \; thus it is in the exterior of Jordan curve }{ \pa\Omega_{\gamma^\bpsi}}.
\ee
By  \eqref{bp1:BA}, we have $\bpsi(1)=\bp_1\in \bR^2\backslash cl(\Omega_{\gamma^\bpsi})$ which is open,  so 
\[
t^+:=\inf\big\{t\in(0,1)\,\big|\,\bpsi\big([t,1]\big)\cap cl(\Omega_{\gamma^\bpsi }) = \emptyset\big\}<1.
\] Then $\bpsi(t^\pm)\in \gamma^\bpsi $ and $t^-\le t^+$. {  Let $\gamma^\bpsi_{sub}\subset \gamma^\bpsi$ denote the closed sub-curve (possibly degenerate to a point) of $\gamma^\bpsi$ enclosed between $\bpsi(t^\pm)$ inclusive. 
 By the definition of interior Jordan curve and definition \eqref{g:q12} of maximal arc, and the fact that $\bpsi(t^-),\bpsi(t^+)\subset B$, we have the closed sets $\gamma^\bpsi_{sub}$ and $\pa B$ are disjoint, hence
\[
d:= dist\big(\gamma^\bpsi_{sub},\pa B\big)>0.
\]  Let $B_1$ denote the set of all points in $B$ that are more than $\frac12 d$ away from $\pa B$. Obviously any point not in $B_1$ is at least $\frac12 d$ away from $\gamma^\bpsi_{sub}$. Any point in $B_1\cap(\pa S\backslash\gamma^\bpsi)$ must belong to some   $\gamma\in\Gset$ satisfying \eqref{J:Q} and distinct from $\gamma^\bpsi$, which implies the closed set $\gamma\cap\overline{B_1}$ is disjoint from $\gamma^\bpsi_{sub}$. Also, since $\gamma$ is either an interior Jordan curve or a maximal arc of length no less that $d$ (due to $\gamma\cap B_1\ne\emptyset$), the total number of such $\gamma$'s is finite. Then,
\[
dist\big(\gamma^{\bpsi}_{sub}\,, \,\pa B \cup(\pa S\backslash\gamma^\bpsi)\big)>0 .
\]}

Now, by the Jordan-Schoenflies theorem, let ${\mathfrak h}$ be a homomorphism of $\bR^2$ that maps the Jordan curve $ \pa\Omega_{\gamma^{\bpsi}}$ to the unit circle while preserving the respective exterior (interior) region.  By the local uniform continuity of ${\mathfrak h}^{-1}$ and the above positivity, we have
\[
dist\big({\mathfrak h}({ \gamma^{\bpsi}_{sub}})\,,\, {  {\mathfrak h}(\pa B\cup(\pa S\backslash\gamma^\bpsi))}\big)>0 
\] 
Recall \eqref{t-:e} and also that definition of $t^+$ implies $\bpsi\big((t^+,1]\big)$ is in the exterior of $\pa\Omega_{\gamma^{\bpsi}}$. 
But  $\bpsi(t^\pm)\in { \gamma^{\bpsi}_{sub}}$. Since topological properties remain valid under ${\mathfrak h}$, we use  continuity of ${\mathfrak h}\circ \bpsi(t)$ to find sufficiently small $\ep>0$ so that 
 {  ${\mathfrak h}\circ\bpsi(t^--\ep)$ and ${\mathfrak h}\circ\bpsi(t^++\ep)$} are   in the exterior of ${\mathfrak h}(\pa\Omega_{\gamma^{\bpsi}})$ and are sufficiently close to ${\mathfrak h}({ \gamma^{\bpsi}_{sub}})$ which is an arc of the circle ${\mathfrak h}(\pa\Omega_{\gamma^{\bpsi}})$.
 Thus, the above positive distance and the simple  geometry of circle ensures  
 ${\mathfrak h}\circ\bpsi(t^\pm\pm\ep)$ can be connected by a path disjoint from ${\mathfrak h}(\pa S\cup \pa B)$. {  This path can possibly intersect the part of $\bpsi(t)$ where $t\in[0,t_--\ep]\cup[t_++\ep,1]$, which is OK as far as path-connectedness is concerned.} 
 Then, in view of \eqref{t-:e}, we  show \(\bpsi(t^++\ep) \in S_{path}\).
\end{itemize}

 If need be, iterate the above starting from below \eqref{bp1:BA}, but focusing on the line segment between $\bpsi(t^++\ep)$ and $\bp_1$ so that
 the first step is the definition of a new $\bpsi(\cdot)$  with the new $\bpsi(0)$ being $\bpsi(t^++\ep) \in S_{path}$ from the previous iteration and $\bpsi(1)$ always being $\bp_1$.   Since {  all $\gamma^\bpsi$ involved in the iteration form a finite set (because it is either an interior Jordan curve or a maximal curve of length no less than $2dist(\bp_1,\pa B)$)}, and since  the definition of $t^+$ prevents  the iteration from intersecting the same $\gamma^\bpsi\in\Gset$ twice, we must have the iteration of item (ii) halt in finite steps, after which we must reach item (i), hence showing $\bp_1\in S_{path}$. 
 
 Therefore we have shown
the two sides of \eqref{S:B} are indeed identical, implying
\[
Area(S_{path}) \ge 1- \sum_{\gamma \in \Gset }Area(\Omega_\gamma).
\] 
Finally, either  $\sum_{\gamma \in \Gset }  Len(\gamma)\ge1$ which trivially proves \eqref{AL1}  or otherwise, every $\gamma\in \Gset $ satisfies $Len(\gamma)<1$, thus proving \eqref{A:L} of Proposition \ref{prop:L1}.   Substitute the result into the above inequality and apply \eqref{A1} to prove \eqref{AL1}.
 \end{proof}

 
 \section{\bf Elliptic Integrals}\label{s:e:i}
 
 The following result on elliptic integrals  is elementary but convenient.

\begin{proposition}\label{prop:ell}
Consider $a>y>b>c>d$ and parameters $\sk^2(a, b, c, d)$, $\sg(a, b, c, d)$ defined in \eqref{def:sk}, \eqref{def:sg}. 
Then, 
\begin{align}
\label{int:yz}
&\int_{(b,y]}\;\frac {\rd \lambda} {\sqrt{-(\lambda-a)(\lambda-b)(\lambda-c)(\lambda-d)}} = \sg \int_0^{y_0} \frac {\rd \lambda_0} {\sqrt{(1-\lambda_0^2)(1-\sk^2\lambda_0^2)}},\\
&\qquad
\nonumber\text{ for \; \; } y_0=\sqrt{ \frac {(a-c)(y-b)} {(a-b)(y-c)} }<1 .
\end{align}
Also, there exists positive constant $C_1$ independent of $a, b, c, d$ so that
\be\label{int:yz:est}
\int_{(d,c)\cup(b, a)}\;\frac {\rd \lambda} {\sqrt{-(\lambda-a)(\lambda-b)(\lambda-c)(\lambda-d)}} \le C_{1}\,\sg\cdot\big(1-\log\sqrt{1-\sk^2}\,\big).
\ee

If, in addition, there exist positive constants $C_2,C_3$ so that 
\be\label{int:yz:est:sharp:c}
\frac{y-b}{a-b}\le C_2<1 \text{ \; and \; } \frac{y-b}{b-c}\le C_3,
\ee 
then a refined estimate
\be\label{int:yz:est:sharp}
\eqref{int:yz} \le C_4\,\sg.
\ee
holds for a positive constant $C_4=C_4(C_2,C_3)$ that is otherwise independent of $a, b, c, d,y$.
\end{proposition}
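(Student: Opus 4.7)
\medskip

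\noindent\textbf{Plan of proof for Proposition~\ref{prop:ell}.} The strategy is to reduce the quartic integrand to the Legendre normal form, then read off the two estimates from elementary bounds on the complete elliptic integral of the first kind $K(\sk)$.

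\smallskip

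\emph{Step 1 (the substitution \eqref{int:yz}).} I will verify directly that the change of variable
\[
\lambda_0^2 \;=\; \frac{(a-c)(\lambda-b)}{(a-b)(\lambda-c)}
\]
maps $\lambda \in (b,y]$ bijectively onto $\lambda_0 \in (0, y_0]$ with $y_0$ as stated, sends $\lambda=b$ to $\lambda_0=0$, and satisfies $y_0<1$ because $y<a$. A routine differentiation gives
\[
2\lambda_0 \,\rd\lambda_0 \;=\; \frac{(a-c)(b-c)}{(a-b)}\,\frac{\rd\lambda}{(\lambda-c)^{2}},
\]
and solving for $(\lambda-b)$ and $(\lambda-c)$ in terms of $\lambda_0$ yields
\[
-(\lambda-a)(\lambda-b)(\lambda-c)(\lambda-d) \;=\; \frac{(a-b)^{2}(b-c)^{2}(a-c)(b-d)}{4\,\sg^{-2}(\lambda-c)^{4}}\,\lambda_0^{2}(1-\lambda_0^{2})(1-\sk^2\lambda_0^{2}),
\]
which immediately produces the right-hand side of \eqref{int:yz} (the factor $\lambda_0$ in the numerator cancels the $2\lambda_0$ in the Jacobian and reproduces the constant $\sg$ after simplification using \eqref{def:sk}, \eqref{def:sg}).

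\smallskip

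\emph{Step 2 (the bound \eqref{int:yz:est}).} Split the integral as $\int_{(b,a)}+\int_{(d,c)}$. The first piece is $\sg\,K(\sk)$ by \eqref{int:yz} with $y\!\uparrow\! a$ (so $y_0\!\uparrow\!1$). For the second piece I apply the affine reflection $\lambda\mapsto(a+d)-\lambda$, which permutes the roots as $(a,b,c,d)\mapsto(a,b,c,d)$ up to the double transposition $a\leftrightarrow d$, $b\leftrightarrow c$; by the $K_4$-invariance of $\IL,\SP$ used above \eqref{def:sk}, both $\sg$ and $\sk^2$ are unchanged, and the piece $\int_{(d,c)}$ is carried onto the piece $\int_{(b,a)}$. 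Hence the full integral equals $2\sg\,K(\sk)$. To estimate $K(\sk)$, substitute $\lambda_0=\sin\phi$ and split $[0,\pi/2]=[0,\pi/4]\cup[\pi/4,\pi/2]$: on the first subinterval the integrand is bounded by $\sqrt{2}$, and on the second (after $\psi=\pi/2-\phi$) one has $1-\sk^2\sin^2\phi\ge (1-\sk^2)/2+(2\psi/\pi)^{2}/2$, so an elementary integration gives $K(\sk)\le C(1-\log\sqrt{1-\sk^2})$.

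\smallskip

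\emph{Step 3 (the refined bound \eqref{int:yz:est:sharp}).} The identity
\[
1-y_0^{2} \;=\; \frac{(b-c)(a-y)}{(a-b)(y-c)}
\]
(which follows from a one-line manipulation of $y_0^{2}$) combined with $\frac{a-y}{a-b}\ge 1-C_2$ and $\frac{b-c}{y-c}=\bigl(1+\tfrac{y-b}{b-c}\bigr)^{-1}\ge (1+C_3)^{-1}$ gives $1-y_0^{2}\ge c_*(C_2,C_3)>0$. For the second factor in the Legendre integrand I split on the value of $\sk$: if $\sk^{2}\le 1/2$ then $1-\sk^{2}\lambda_0^{2}\ge 1/2$; if $\sk^{2}>1/2$ then $1-\sk^{2}\lambda_0^{2}\ge 1-\sk^{2}y_0^{2}\ge \sk^{2}(1-y_0^{2})\ge \tfrac12 c_*$. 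In either case both factors under the square root are uniformly bounded below, so the integrand of \eqref{int:yz} is $\lesssim_{C_2,C_3}1$ on $[0,y_0]\subset[0,1]$, proving \eqref{int:yz:est:sharp}.

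\smallskip

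\emph{Expected obstacle.} No step is truly hard; the most error-prone part will be the algebraic verification in Step~1 that the Jacobian and the quartic under the radical combine exactly to produce the constant $\sg$ (one must track signs carefully because $y_0>0$ requires $(\lambda-b)/(\lambda-c)>0$ on $(b,y]$). The $K_4$-invariance argument used to equate the two disjoint pieces of the integral in Step~2 is the only nontrivial conceptual ingredient; everything else is classical.
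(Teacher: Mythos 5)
Your overall route is the paper's: the same Legendre substitution $\lambda_0^2=\frac{(a-c)(\lambda-b)}{(a-b)(\lambda-c)}$ for \eqref{int:yz}, the limit $y\nearrow a$ plus a logarithmic bound on the complete integral for \eqref{int:yz:est}, and the identity $1-y_0^2=\frac{(a-y)(b-c)}{(a-b)(y-c)}$ combined with \eqref{int:yz:est:sharp:c} for \eqref{int:yz:est:sharp}. Steps 1 and 3 are sound in substance (in Step 3 the paper argues even more simply, relaxing $1-\sk^2\lambda_0^2>1-\lambda_0^2$ and integrating $\frac1{1-\lambda_0^2}$ explicitly, but your two-case pointwise lower bound works). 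One algebraic slip in Step 1: since $4\sg^{-2}=(a-c)(b-d)$, your displayed prefactor collapses to $\frac{(a-b)^2(b-c)^2}{(\lambda-c)^4}$, which is not correct; the identity you need is
\[
-(\lambda-a)(\lambda-b)(\lambda-c)(\lambda-d)\;=\;\frac{(a-b)^2(b-d)}{(a-c)(b-c)^2}\,(\lambda-c)^4\,\lambda_0^{2}\,(1-\lambda_0^{2})(1-\sk^2\lambda_0^{2}),
\]
with $(\lambda-c)^4$ in the numerator; with this corrected, the Jacobian and the radical do combine to give exactly $\sg$, as in the paper.

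The genuine flaw is in Step 2. The affine reflection $\lambda\mapsto(a+d)-\lambda$ does \emph{not} permute the roots by the double transposition $a\leftrightarrow d$, $b\leftrightarrow c$: it sends $b\mapsto a+d-b$ and $c\mapsto a+d-c$, which equal $c$ and $b$ only in the special case $a+d=b+c$. So the quartic is not preserved, the piece $\int_{(d,c)}$ is not carried onto $\int_{(b,a)}$ of the same integrand, and your claim that the full integral equals twice the complete integral is not established as written. The repair is immediate and is exactly the paper's argument: substitute $\mu=-\lambda$, so that $\int_{(d,c)}$ becomes the top-interval integral for a \emph{new} quartic whose descending-ordered roots are $-d>-c>-b>-a$; this reordering is the reversal (a double transposition) composed with negation, under which $\IL$ and $\SP$ from \eqref{def:sk}--\eqref{def:sg} are unchanged, so $\sk^2(-d,-c,-b,-a)=\sk^2(a,b,c,d)$ and $\sg(-d,-c,-b,-a)=\sg(a,b,c,d)$, and applying \eqref{int:yz} with $y$ tending to the new largest root gives the same value $\sg\int_0^1\frac{\rd\lambda_0}{\sqrt{(1-\lambda_0^2)(1-\sk^2\lambda_0^2)}}$ for that piece. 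Your subsequent logarithmic estimate of the complete integral is fine (the paper instead discards the factor $(1+\lambda_0)(1+\sk\lambda_0)\ge1$ and splits the cases $\sk^2<\tfrac12$ and $\sk^2\ge\tfrac12$); either version yields \eqref{int:yz:est}.
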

\begin{proof}
By change of coordinates
\[
\lambda_0=\sqrt{ \frac {(a-c)(\lambda-b)} {(a-b)(\lambda-c)} } = \sqrt { \frac{\EC(a,\lambda, b, c)}{\IL(a,\lambda, b, c)} }\qua{for any}\lambda\in(b, a),
\]
we use identity \eqref{sum:id} to have
\[
(1-\lambda_0^2)(1-\sk^2\lambda_0^2)=\frac{\SP(a,\lambda, b, c)}{\IL(a,\lambda, b, c)}\left(1-\frac{\SP(\lambda, b, c, d)}{\IL(\lambda, b, c, d)}\right)=\frac{(a-\lambda)(\lambda-d)} {(a-b)(b-d)}\left( \frac{b-c} {\lambda-c} \right)^2.
\]
Also, apparently
\[
\rd \lambda_0 = \frac 12 \sqrt{ \frac {a-c} {a-b} } \, \sqrt{ \frac {\lambda-c} {\lambda-b} }\, \frac {b-c} {(\lambda-c)^2}\,.
\]
Therefore we prove \eqref{int:yz}. The $y_0<1$ part is due to the rearrangement inequality (or \eqref{sum:id}).

Next, in   \eqref{int:yz}, we let $y\nearrow a$ or equivalently $y_0\nearrow 1$, relax the factor $(1+\lambda_0)(1+\sk \lambda_0)\ge1$ on the right-hand side and then treat the cases of $0< \sk^2<\frac12$ and $\frac12\le \sk^2<1$ differently to prove 
\[
\int_{(b, a)}\;\frac {\rd \lambda} {\sqrt{-(\lambda-a)(\lambda-b)(\lambda-c)(\lambda-d)}} \le\text{ right-hand side of }\eqref{int:yz:est}.
\] Apply this result to $-d>-c>-b>-a$, noting $\sk^2(-d,-c,-a,-b)=\sk^2(a, b, c, d)$ and   $\sg(-d,-c,-a,-b)=\sg(a, b, c, d)$, to show the entire   \eqref{int:yz:est}.

 \eqref{int:yz:est:sharp}  follows from  relaxing $1-\sk^2\lambda_0^2>1-\lambda_0^2$ in \eqref{int:yz}, and then using identity and estimate
\[
1-y_0^2=  { \frac{1-\frac{y-b}{a-b}} {\frac{y-b}{b-c}+1} }\ge {  \frac{1-C_2} {C_3+1} }>0.
\]
\end{proof}
 
 
\section{\bf Optimality of Choice of Bandwidth $\delta$ and Integer Count}\label{S:op}

We establish {\it lower} bounds for number of $k$ associated with $\cN$ via direct counting (not via volume estimate).
If one accepts that estimate \eqref{ci} with $\beta=1$ is a condition that is not only sufficient but also technically necessary for the proof of Theorem \ref{thm:2Dlike} to work,  then the ``optimality of \eqref{de:up}'' means that \eqref{de:up} being valid asymptotically for arbitrarily large wavenumbers is necessary for \eqref{ci} with $\beta=1$ to hold for all $|\cn|$. 

Set $\sL_1=\sL_2=\sL_3=1$ for simplicity.   Consider set $\cNp\subset(\bZ^3)^3$ given in Lemma \ref{restricted:L} for $\cN$ defined in \eqref{def:cN} with $\delta$ subject to \eqref{de:max}.  By Theorems \ref{thm:int:vol} and  \ref{thm:V},  the count of integer points $k$ as in  $\sum_{ \substack{ k \in \bZt }} \ind_{\cNp}(n,k,-n-k)$ is shown to have  an {\it upper} bound   at the order  of
\be\label{up:also:low}
|n|^2+|n|^3\,\delta+|n|^3\,\delta\log^+\dfrac1{\delta+\frac{2|n_3|}{|n|}}\,,\quad\text{\, for any large \,}|n|.
\ee  
In the following main results of this appendix, we provide examples to show that the same type of integer point count has {\it lower} bounds of the same order for $n_3=0$ and $n_3\ne0$ (with a change in the latter case that is justified in the remarks), hence showing that  if $\delta$ violates \eqref{de:up} then \eqref{ci} with $\beta=1$ can not hold.
 \begin{proposition}\label{prop:le1}
 For $\delta\in(0,\frac12)$ and $n\in\bZt$, 
 the number of  all $k\in\bZt$ subject to constraints
 \begin{align}\label{prop:c1}
& |n|\ge|k|\ge|n+k|\ge\frac13|n|,\\
\intertext{(so all 3 wavenumbers are ``localised''), and}
 \label{prop:c2}& \left|\frac{n_3}{|n|}+\frac{k_3}{|k|}-\frac{n_3+k_3}{|n+k|}\right| \le \delta,
 \end{align}
for large enough $|n|$ is at least of order:
 \begin{itemize}
\item[{\textnormal{(i)}}] $|n|^2+|n|^3\,\delta\log\frac1\delta$ under additional constraints $n_3=0$ and $k_3\ne0$; 
\item[{\textnormal{(ii)}}] $|n|^2\,\min\big\{1,(\delta|n|)^{3\over2}\big\}+ |n|^3\,\delta\min \big\{\log\frac1\delta,\delta|n|\log|n|\big\}$ under additional constraints $n_3k_3(n_3+k_3)\ne0$ and $1\lesssim\delta|n|^2$.
 \end{itemize}
 
 The first lower bound also holds for $\delta=0$ if $0|\log0|=0$ is understood. 
 \end{proposition}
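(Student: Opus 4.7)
The plan is to produce, for every $n$ satisfying the hypotheses (in particular $n_3=0$ in (i) and $n_3 k_3 (n_3+k_3)\ne 0$ in (ii)) and every $|n|$ sufficiently large in a $\delta$-dependent way, a direct lower bound on the integer count by extracting lattice points from the sublevel set of the triplet. The algebraic starting point is the identity
\[
\frac{n_3}{|n|}+\frac{k_3}{|k|}-\frac{n_3+k_3}{|n+k|}=\frac{n_3\bigl(|k|^2+2n\cdot k\bigr)}{|n||n+k|\,(|n+k|+|n|)}+\frac{k_3\bigl(|n|^2+2n\cdot k\bigr)}{|k||n+k|\,(|n+k|+|k|)},
\]
obtained from $|n+k|^2-|k|^2=|n|^2+2n\cdot k$ and its symmetric companion. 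Under the localization \eqref{prop:c1} each of $|n|,|k|,|n+k|,|n+k|+|k|$ is comparable to $|n|$, so \eqref{prop:c2} reduces, up to constants independent of the direction of $n_H$, to an arithmetic inequality between $k_3$ and the integer defect $w(k):=|n|^2+2n\cdot k$ (together with the companion defect $|k|^2+2n\cdot k$ when $n_3\ne 0$). All counting will be done slice-by-slice in $k_3$, where the $k_H$-slice is the intersection of a strip with the half-disk $\{|k_H|^2\le|n|^2-k_3^2,\;n_H\cdot k_H\le-|n|^2/2\}$; the lattice count on each such convex slice is produced by the convex-set bound
\[
\#(S\cap\bZ^2)\ge\mathrm{Area}(S)-C\cdot\mathrm{Perimeter}(S),
\]
a specialisation of Lemma \ref{lem:m:Jordan} whose absolute constant $C$ is uniform in the direction of $n_H$.

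For part (i), $n_3=0$ collapses the triplet to a single $k_3$-linear expression, and the NR inequality becomes $|k_3|\cdot|w(k_H)|\lesssim\delta|n|^3$ with $w(k_H)=|n|^2+2n_H\cdot k_H\le 0$. At each fixed integer $k_3\ge 1$, the admissible $k_H$-region is the above half-disk intersected with the strip $\{-C\delta|n|^3/k_3\le w(k_H)\le 0\}$: perpendicular width $\sim\delta|n|^2/k_3$, chord length $\sim\sqrt{|n|^2-k_3^2}$ at perpendicular distance $|n|/2$ from the origin, and perimeter $O(|n|)$. Assuming $|n|\ge 2/\delta$ (permitted by ``large enough $|n|$''), the convex-set bound yields an integer count bounded below by $\sim|n|^2$ per $k_3$ in the saturated range $1\le k_3\lesssim\delta|n|$ (where the strip is wider than the half-disk) and by $\sim\delta|n|^3/k_3$ in the tail range $\delta|n|\lesssim k_3\le|n|$. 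Summing, the saturated range contributes $\sim\delta|n|\cdot|n|^2=\delta|n|^3\ge|n|^2$, and the tail contributes $\sum_{k_3=\delta|n|}^{|n|}\delta|n|^3/k_3\sim\delta|n|^3\log(1/\delta)$; together these produce the claimed $|n|^2+|n|^3\delta\log(1/\delta)$. The $k_3\ne 0$ constraint is automatic since the summation starts from $k_3=1$. For $\delta=0$ the NR inequality becomes the exact resonance $w(k_H)=0$; the same convex-set bound applied to the line segment $w=0$ inside the half-disk, combined with the $k_3$-freedom, gives $\gtrsim|n|^2$ resonant integer triples with $k_3\ne 0$, consistent with the convention $0|\log 0|=0$.

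For part (ii) with $n_3 k_3(n_3+k_3)\ne 0$, the extra $n_3$-term in the identity prevents factoring $k_3$ out, so the NR sublevel set no longer decomposes into two half-plane strips as in (i). I would adopt the spherical-coordinate reparametrisation from \S\ref{S5:1} to analyse the sublevel set slice-by-slice in $k_3$, with each $k_H$-slice again being a strip intersected with the half-disk (now with $k_3$-dependent geometry reflecting the full $n_3$-dependence) and handled by the same convex-set bound. Summing over integer $k_3$, the contribution splits into two regimes: a ``near-stationary band'' around the value of $k_3$ that minimises the triplet at fixed $k_H$, which produces the $|n|^2\min\{1,(\delta|n|)^{3/2}\}$ term through the combination of the integer count of $k_3$ inside the band with the per-slice $k_H$-area, and a ``tail'' where the triplet grows linearly in $k_3+n_3/2$, which produces the $|n|^3\delta\min\{\log(1/\delta),\delta|n|\log|n|\}$ term via the usual $1/|k_3+n_3/2|$ decay of the strip area. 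The two $\min$'s correspond precisely to the regimes $\delta|n|\le 1$ and $\delta|n|>1$ both permitted by the hypothesis $1\lesssim\delta|n|^2$. The main obstacle throughout, for both (i) and (ii), is to maintain direction-independence in $n_H$: the convex-set lattice bound (with its absolute constant) is what makes this possible, since any argument relying on $\gcd(n_1,n_2)$ or the parity of $|n|^2$ would fail for generic $n$; the hypothesis ``large enough $|n|$'' is calibrated so that the perimeter error $O(|n|)$ becomes a lower-order correction to the main terms.
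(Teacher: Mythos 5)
Your proposal has two genuine problems, one of scope and one of substance. First, you set out to prove the bound \emph{for every} $n$ with $n_3=0$ (resp.\ $n_3k_3(n_3+k_3)\ne0$), but the proposition is an existence-of-examples statement (its role in Appendix \ref{S:op} is to exhibit $n$'s for which the count is large), and the universal version is actually false in the regime you rely on for the $|n|^2$ term: at $\delta=0$ the condition with $n_3=0$, $k_3\ne0$ forces $2n_H\cdot k_H=-|n|^2$, which has \emph{no} integer solutions whenever $|n|^2$ is odd (e.g.\ $n=(N,N+1,0)$), so the count is $0$, not $\gtrsim|n|^2$. Your proposed fix --- ``the convex-set bound applied to the line segment $w=0$'' --- cannot work because a segment has zero area, so area-minus-perimeter is negative. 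The paper instead \emph{chooses} $n=(-2N,0,0)$, for which the resonant slice $k_1=N$ visibly carries $\sim N^2$ lattice points, and builds the whole count around that explicit box \eqref{def:Sd}. Relatedly, the lower bound $\#(S\cap\bZ^2)\ge \mathrm{Area}(S)-C\,\mathrm{Perimeter}(S)$ is \emph{not} a specialisation of Lemma \ref{lem:m:Jordan} (which is one-sided, an upper bound); you need Jarnik--Steinhaus for a single Jordan curve, and even then your slice regions are not convex once the constraint $|n+k|\ge\frac13|n|$ (the exterior of a disk centred at $-n_H$) is enforced --- you never impose this constraint, and in your ``saturated range'' the full half-disk you count does contain points violating it. These are fixable for $\delta|n|\gtrsim1$ by truncating the strip to $w\ge -c|n|^2$, but as written part (i) is incomplete and the $\delta=0$ claim is unproved.

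Part (ii) is only a plan. The claimed exponents --- $(\delta|n|)^{3/2}$ and $\delta\min\{\log\frac1\delta,\delta|n|\log|n|\}$ --- are asserted to ``emerge'' from a near-stationary band and a tail, with no computation, and your slice-by-slice strategy does not address the central difficulty when $n_3\ne0$: in the decomposition of the triplet into an $n_3$-term and a $k_3$-term, the two pieces have opposite signs and must be controlled \emph{separately} within $[-\delta,0]$ and $[0,\delta]$. The paper's construction (set \eqref{def:Ssd} with $n=(-2N,0,-1)$) does exactly this by confining $|k|$ to the annulus $\frac{|n|}{1+\delta|n|}\le|k|\le|n|$ so that $f^\flat=\frac1{|n|}-\frac1{|k|}\in[-\delta,0]$, while the admissible $k_3$-range is capped at $\frac N3\sqrt{\delta|n|}$ and the $k_2$-range has length $\min\{1,\delta|n|\}\,O(|n|)$; the $\tfrac32$ power is precisely the product of the $\sqrt{\delta|n|}$ cap on $k_3$ with the $\delta|n|$ shrinkage of the $k_2$-range, and the hypothesis $1\lesssim\delta|n|^2$ is what guarantees the $k_2$-interval is long enough to contain integers at all. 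None of this mechanism is present in your sketch, so part (ii) cannot be assessed as a proof.
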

 
 We make some remarks. By dispersion relation \eqref{om:eig} for the large operator $\Omega\cL$ that is responsible for fast waves, any term in eigen-expansion \eqref{e:eig:exp} with purely horizontal wavenumber, i.e. $n_3=0$, is {\it independent} of the fast time $\tau$, so it is called a slow mode in the dichotomic framework used in most literature whereas any mode with $n_3\ne0$ is regarded fast.  In the case of three slow/horizontal modes,  the  $n_3=k_3=m_3=0$ condition makes the counting elementary, thus we omit the detail. There does not exist any triplet of one fast and two slow  modes due to  $n_3+k_3+m_3=0$. Then Proposition \ref{prop:le1} addresses both triplets of slow-fast-fast interaction and fast-fast-fast interaction -- we made an effort on this separate treatment as it appears frequently in literature. 
The estimate for either case {\it alone} can justify the use of logarithmic factor in \eqref{de:up}. Also note the result for former case attests to the optimality of upper bound \eqref{up:also:low} for every $\delta\in[0,\frac12)$ and the result for the latter case deviates from \eqref{up:also:low} for $|n|^{-2}\lesssim\delta\ll |n|^{-1}$. Such degeneracy is however expected. In fact, it can be rigorously argued in the limiting case $\delta=0$ for which it was shown in \cite[Lemma 4.1]{KY:number} that the number of integer solution to the Diophantine {\it equation} associated with purely fast interaction has an  $O(|n|^{1+\ep})$ upper bound for any $\ep>0$, therefore it is not possible to close the gap with \eqref{up:also:low}. We should note that this result does not apply to all aspect ratios of the domain (\cite[Remark 4.2]{KY:number}).
\begin{proof} 
 For case (i), let $N$ be any large positive integer and $n=(-2N,0,0)$.
 Consider set 
 \be\label{def:Sd}\bigg\{(k_1 ,k_2,k_3)\in\bZt \, :\, k_1 \in\Big[N,N+\frac N3\min\big\{\frac{\delta |n|}{2k_3},\, 1\big\}\Big]\text{\, and \,}\max\{|k_2|,|k_3|\}\le    N,\,k_3\ne0\bigg\}.
 \ee
 It is elementary to show   any  $k$ from this set  satisfies \eqref{prop:c1} and  also, for $m:=-n-k$, we have
\be\label{km:k+m}
 |k|\,|m|\ge k_1 m_1=k_1(2N-k_1)\ge \frac89 N^2,\qua{ and } |k|+|m|\ge |n|.
 \ee
 Then any such $k$ satisfies \eqref{prop:c2} since
 \be\label{nkm:c1}
-\frac{n_3}{|n|}-\frac{k_3}{|k|}-\frac{m}{|m|}  =\frac{k_3(|k|-|m|)}{|k|\,|m| } 
 = \frac{4k_3N(k_1-N)}{|k|\,|m|\,(|k|+|m|)}\in\big[-\frac34\delta,\frac34\delta\,\big].
 \ee
At a fixed  $k_3$, the count of integer pairs $(k_1,k_2)$ admissible by \eqref{def:Sd} is  
\[
(2N+1)\left(1+\left\lfloor\frac N3\min\{\frac{\delta N}{k_3},1\}\right\rfloor\right).
\]
With $1+\lfloor x \rfloor >\frac12(1+x)$ for $x\ge 0$, 
 its ${k_3}$-sum for  $|k_3|\in[1,N]$ can be bounded from below by 
\[\begin{aligned}
  (2N+1)N+\frac{(2N+1) N}3 \int_1^{N+1} \min\big\{\frac{\delta N}{k_3},\,1\big\}\,\rd k_3.
\end{aligned}\]
The integral equals $\delta N\log(N+1)$ if  $0<\delta N< 1$, and equals
$\delta N\log\frac{N+1}{\delta N}+(\delta N-1)$ if $\delta N\ge 1$. 
By using $x\log x\ge -e^{-1}$, we lower the integral value in the $0<\delta N<1$ case as  
 \[ \delta N\log (N+1)>\delta N \log\frac1\delta+{\delta N}\cdot\log(\delta N)\ge \delta N\log\frac1\delta-{e^{-1}},\]
which apparently also works as lower bound for the $\delta N\ge1$ case. Noting $|n|=2N$,  we prove the first case of the proposition which also holds for $\delta=0$ if $0|\log0|=0$ is understood.  
 
For case (ii), one may consider a proof by treating it as perturbation of case (i). In a nutshell, let $n'=(-2N,0,\alpha)$ and $k'=(k_1,k_2,k_3-\alpha)$ for any  $(k_1,k_2,k_3)$ from \eqref{def:Sd}. With $m'=-n'-k'$,  estimate the difference between \eqref{nkm:c1} and $-\frac{n_3'}{|n'|}-\frac{k_3'}{|k'|}-\frac{m_3'}{|m'|}$ at $\alpha=1$ by considering the $\alpha$ derivative and showing that the difference is   $O({|n|}^{-1})$. Then,  using the first case, one can prove the second case for  as long as $1\lesssim \delta|n|$. To cover a wider parameter range as claimed in the proposition, we adopt the following more refined approach.

  For large integer $N$, let $n=(-2N,0,-1)$ and consider  set  
 \be\label{def:Ssd}
 \begin{aligned}
\bigg\{&(k_1 ,k_2,k_3)\in\bZt \, :  \; \, \frac{|n|}{1+\delta|n|}\le |k|\le |n|,\\
&\quad k_1 \in\Big[N, N+\frac N3\min\big\{\frac{\delta |n|}{2(k_3-1)},\, 1\big\} \Big]
\text{ \, and \, } k_3\in \Big[2,\frac N3\min\big\{\sqrt{\delta |n|},1\big\}\Big]\;\bigg\}.
\end{aligned}
\ee
It is elementary to show   any $k$ from this set  satisfies \eqref{prop:c1}.
Next, with  $m:=-n-k$, consider
\[
-\frac{n_3}{|n|}-\frac{k_3}{|k|}-\frac{m_3}{|m|}=f^\flat(k)+f^\sharp(k):=\Big(\frac{1}{|n|}-\frac1{|k|}\Big) +\Big( \frac{k_3-1}{|m|}-\frac{k_3-1}{|k|} \Big).
\]
By \eqref{prop:c1}, we find $f^\flat\le0$ and $f^\sharp\ge 0$. By the first inequality of \eqref{def:Ssd}, we have $f^\flat\ge-\delta$.
For the $f^\sharp(k)$ term, we use the  $\delta$-dependent upper bounds of $k_1,k_3$ in \eqref{def:Ssd} to have \[
f^\sharp(k)=(k_3-1)\cdot\frac{|k|-|m|}{|k|\,|m|} =(k_3-1)\cdot\frac{4N(k_1-N)+2k_3-1}{|k|\,|m|\,(|k|+|m|)} \le\frac{ \frac46 N^2\delta |n|+\frac29 N^2\delta |n|}{|k|\,|m|\,(|k|+|m|)} .
 \]
 Since it is easy to show \eqref{km:k+m} still holds, we find \( f^\sharp\le\delta\), hence proving \eqref{prop:c2}.

We move on to counting of \eqref{def:Ssd} where the constraints on $k_1,k_3$ are independent of $k_2$. Then, at any such admissible $k_1,k_3$, 
 the range of $|k_2|$ 
  is of length
\[\begin{aligned}
 &\qquad\sqrt{|n|^2 -k_1^2-k_3^2} - \sqrt{ \left(\tfrac{|n|^2}{(1+\delta|n|)^2}-k_1^2-k_3^2\right)^+}      \\
 & >\begin{cases}\sqrt{\frac{19}{36}}\,|n|,&\text{if }\frac{|n|^2}{(1+\delta|n|)^2}\le k_1^2+k_3^2,\\
 \frac{|n|^2-\frac{|n|^2}{(1+\delta|n|)^2}}{ \sqrt{|n|^2 -k_1^2-k_3^2} +\sqrt{ \tfrac{|n|^2}{(1+\delta|n|)^2}-k_1^2-k_3^2} }\ge\delta|n|^2\,\dfrac{\frac{2+\delta|n|}{(1+\delta|n|)^2}}{\sqrt{0.75}+\sqrt{(1+\delta|n|)^{-1}-0.25}},&\text{if }\frac{|n|^2}{(1+\delta|n|)^2}> k_1^2+k_3^2,
 \end{cases}
\end{aligned}\]
where we used 
$\frac{17}{36}|n|^2>k_1^2+k_3^2>\frac14|n|^2$. This last bound also implies that the assumption for the second case above is only possible when $\delta|n|<1$, a condition that can be used to show that the second lower bound is $O(\delta|n|^2)$. Note there are at least $(b-a)$ integers in $[a, b]$. In summary, at any $(k_1,k_3)$ admissible by \eqref{def:Ssd}, the number of integer $k_2$ has a lower bound
\[
\min\{1,\delta|n|\}\,O(|n|),\quad\text{  provided \; $1\lesssim\delta|n|^2$}.
\]

Independently, we estimate $\left(\sum_{k_3}\sum_{k_1}1\right)$, the count of integer pairs $(k_1,k_3)$ admissible by \eqref{def:Ssd}. When $\delta|n|\ge1$, we use $1+\lfloor x \rfloor > \frac12(1+x)$ for $x\ge 0$ to find a lower bound as
\[\begin{aligned}
 &\sum_{k_3=2}^{\left\lceil \delta|n|/2 \right\rceil}\Big(1+\big\lfloor\tfrac N3 \big\rfloor\Big)
+ \sum_{k_3=\left\lceil \delta|n|/2 \right\rceil+1}^{\lfloor N/3\rfloor}\Big(1+\big\lfloor\tfrac N3\,\tfrac{\delta |n|}{2(k_3-1)}\big\rfloor\Big)\\
&>\frac12\lfloor N/3\rfloor+ \frac N6\Big(\frac12\delta|n|-1\Big)^++\frac N{12}\Big(\int_{\left\lceil \delta|n|/2 \right\rceil+1}^{\left\lfloor N/3\right\rfloor+1}\frac{\delta|n|}{k_3-1}\,\rd k_3\Big)^+.
\end{aligned}\] 
 At large $N$, the right-hand side is of order $N+ N^2 \delta +  N^2\delta\log^+\frac1{3\delta}$ which has a lower bound of  order $|n|+|n|^2\delta\log\frac1\delta$ since $|n|^2=4N^2+1$ and $\delta\in(0,\frac12)$. The case when $\delta|n|<1$ is estimated similarly with a lower bound on $\left(\sum_{k_3}\sum_{k_1}1\right)$ to be
\[
\sum_{k_3=2}^{\left\lfloor(N/3)\sqrt{\delta|n|}\right\rfloor}\Big(1+\big\lfloor\tfrac N3\,\tfrac{\delta |n|}{2(k_3-1)}\big\rfloor\Big)>\frac12\lfloor N/3\sqrt{\delta|n|}\rfloor+\frac N{12}\int_2^{\left\lfloor N/3\sqrt{\delta|n|}\right\rfloor+1}\frac{\delta|n|}{k_3-1}\,\rd k_3.
\]
Using $\sqrt{\delta||n|}\log\sqrt{\delta|n|}\ge -e^{-1}$, we can show the right-hand side has an asymptotic lower bound of order $N\sqrt{\delta|n|}+N\delta|n|\log\frac N6$. The proof of case (ii) is complete.
\end{proof}

\bibliographystyle{alpha}

\end{document}